\theoremstyle{plain}
\newtheorem{thm}{Theorem}[section]
\newtheorem{cor}[thm]{Corollary} 
\newtheorem{lemma}[thm]{Lemma} 
\newtheorem{prop}[thm]{Proposition}
\theoremstyle{remark}
\theoremstyle{definition}
\newtheorem{defi}[thm]{Definition}
\newtheorem{example}[thm]{Example}
\newtheorem{remark}[thm]{Remark}
\def\today{{\number\day\space
 \ifcase\month\or
  January\or February\or March\or April\or May\or June\or
  July\or August\or September\or October\or November\or December\fi
 \space\number\year}}
\newcommand\Afr{{\mathfrak A}}
\newcommand\alg{{\operatorname{alg}}}
\newcommand\alphah{{\hat{\alpha}}}
\newcommand\at{{\tilde a}}
\newcommand\At{{\widetilde A}}
\newcommand\bh{{\hat b}}
\newcommand\Bt{{\widetilde B}}
\newcommand\clspan{{\overline{\mathrm{span}}\,}}
\newcommand\Cpx{{\mathbf C}}
\newcommand\Ec{{\mathcal{E}}}
\newcommand\Ect{{\widetilde{\Ec}}}
\newcommand\eps{\epsilon}
\newcommand\Et{{\widetilde E}}
\newcommand\Fc{{\mathcal{F}}}
\newcommand\gammah{{\hat{\gamma}}}
\newcommand\Gc{{\mathcal{G}}}
\newcommand\ImagPart{{\mathrm{Im}\,}}
\newcommand\Ints{{\mathbf Z}}
\newcommand\Lc{{\mathcal{L}}}
\newcommand\lspan{\mathrm{span}\,}
\newcommand\Mul{{\operatorname{Mul}}}
\newcommand\Nats{{\mathbf N}}
\newcommand\NC{\operatorname{NC}}
\newcommand\oneh{{\hat 1}}
\newcommand\oup{^{\mathrm o}}
\newcommand\Pc{{\mathcal{P}}}
\newcommand\Pct{{\widetilde\Pc}}
\newcommand\RealPart{{\mathrm{Re}\,}}
\newcommand\Reals{{\mathbf R}}
\newcommand\restrict{{\upharpoonright}}
\newcommand\st{{\tilde s}}
\newcommand\taut{{\tilde\tau}}
\newcommand\Thetat{{\widetilde\Theta}}
\newcommand\yh{{\hat y}}
\newcommand\yt{{\tilde y}}
\begin{document}

\title[R-diagonal]{On algebra-valued R-diagonal elements}

\author[Boedihardjo]{March Boedihardjo$^*$}
\address{M.\ Boedihardjo, Department of Mathematics, Texas A\&M University, College Station, TX 77843-3368, USA}
\email{march@math.tamu.edu}
\thanks{{}$^*$Research supported in part by NSF grant DMS-1301604.}

\author[Dykema]{Ken Dykema$^\dag$}
\address{K.\ Dykema, Department of Mathematics, Texas A\&M University, College Station, TX 77843-3368, USA}
\email{kdykema@math.tamu.edu}
\thanks{{}$^\dag$Research supported in part by NSF grant DMS-1202660.}

\subjclass[2000]{46L54}
\keywords{R-diagonal element, circular element, noncrossing cumulant}

\date{April 6, 2016}

\begin{abstract}
For an element in an algebra-valued $*$-noncommutative probability space,
equivalent conditions for algebra-valued R-diagonality (a notion introduced by {\'S}niady and Speicher)
are proved.
Formal power series relations involving the moments and cumulants of such R-diagonal elements are proved.
Decompositions of algebra-valued R-diagonal elements into products of the form unitary times self-adjoint are investigated;
sufficient conditions, in terms of cumulants, for $*$-freeness of the unitary and the self-adjoint part are proved,
and a tracial example is given where $*$-freeness fails.
The particular case of algebra-valued circular elements is considered;
as an application, the polar decompostion of
the quasinilpotent DT-operator is described.
\end{abstract}

\maketitle

\section{Introduction}

Let $B$ be a unital algebra over the complex numbers.
A {$B$-valued noncommutative probability space} is a pair $(A,\Ec)$, where $A$ is a unital algebra containing a unitally embedded
copy of $B$ and $\Ec$ is a {conditional expectation}, namely, an idempotent linear mapping
$\Ec:A\to B$ that restricts to the identity on $B$
and satisfies $\Ec(b_1ab_2)=b_1\Ec(a)b_2$ for every $a\in A$ and $b_1,b_2\in B$.
Elements of $A$ are called {random variables} or $B$-valued random variables.
When $B$ and $A$ are $*$-algebras and $\Ec$ is $*$-preserving, then the pair $(A,\Ec)$ is called a 
{$B$-valued $*$-noncommutative probability space.}
The $B$-valued $*$-distribution of an element $a\in A$ is, loosely speaking, the collection of $*$-moments of the form
$\Ec(a^{\eps(1)}b_1a^{\eps(2)}\cdots b_{n-1}a^{\eps(n)})$ for $n\ge1$, $\eps(1),\ldots,\eps(n)\in\{1,*\}$ and $b_1,\ldots,b_{n-1}\in B$.
See Definition~\ref{def:*distr} for a formal version.

We study $B$-valued R-diagonal random variables
in $B$-valued $*$-non\-com\-mu\-ta\-tive probability spaces.
Our motivation is the results of \cite{BD}, where certain random matrices are proved to be asymptotically $B$-valued
R-diagonal.
In the scalar-valued case ($B=\Cpx)$, R-diagonal random variables were introduced by Nica and Speicher \cite{NS97}
and these include many natural and important examples in free probability theory.
In a subsequent paper \cite{NSS01}, Nica, Shlyakhtenko and Speicher found several equivalent characterizations
of scalar-valued R-diagonal elements.
In~\cite{SS01}, {\'S}niady and Speicher introduced $B$-valued R-diagonal elements and proved some equivalent characterizations of them.
In this paper we firstly prove some further
characterizations of $B$-valued R-diagonal elements, which are analogous to those of \cite{NSS01}.
Secondly, we prove a result involving power series for $B$-valued R-diagonal elements that is similar to the
power series relation between the R-transform and the the moment series of a single random variable.
Thirdly, we examine polar decompositions of $B$-valued R-diagonal elements, which is a more delicate topic than in the scalar-valued case.
We also study $B$-valued circular elements (a special
case of $B$-valued R-diagonal elements)
and we prove a new result about the polar decomposition of a quasinilpotent DT-operator.

\begin{defi}\label{def:maxaltptn}
Given $n\in\Nats$ and $\eps=(\eps(1),\ldots,\eps(n))\in\{1,*\}^n$,
we define the {\em maximal alternating interval partition} $\sigma(\eps)$ to be the interval partition
of $\{1,\ldots,n\}$ whose blocks $V$ are the maximal interval subsets of $\{1,\ldots,n\}$ such that if $j\in V$ and $j+1\in V$,
then $\eps(j)\ne\eps(j+1)$.
\end{defi}

The following definition is
a reformulation of one of the characterizations of R-diagonality found in~\cite{SS01} and (in the scalar-valued case) in~\cite{NSS01}.
\begin{defi}\label{def:rdiag}
We say that an element $a$ in a $B$-valued $*$-noncommutative probability space $(A,\Ec)$ is {\em $B$-valued R-diagonal}
if for every integer $k\ge0$ and every $b_1,\ldots,b_{2k}\in B$ we have
\[
\Ec(ab_1a^*b_2ab_3a^*\cdots b_{2k-2}ab_{2k-1}a^*b_{2k}a)=0,
\]
(namely, odd alternating moments vanish)
and, for every integer $n\ge1$, every $\eps\in\{1,*\}^n$ and every choice of $b_1,b_2,\ldots b_n\in B$, we have
\begin{equation}\label{eq:IPcond}
\Ec\left(\prod_{V\in\sigma(\eps)}\left(\left(\prod_{j\in V}a^{\eps(j)}b_j\right)-\Ec\left(\prod_{j\in V}a^{\eps(j)}b_j\right)\right)\right)=0,
\end{equation}
where in each of the three products above, the terms are taken in order of increasing indices.
\end{defi}

\begin{remark}\label{rem:a*Rdiag}
Clearly $a$ is $B$-valued R-diagonal if and only if $a^*$ is $B$-valued R-diagonal.
\end{remark}

\begin{remark}\label{rem:l2.1}
It is not difficult to see, by expansion of the left-hand-side of~\eqref{eq:IPcond} and induction on $n$,
that the $B$-valued $*$-distribution
of a $B$-valued R-diagonal element is completely determined by the collection of
even, alternating moments, namely, those of
of the form
\[
\Ec(a^*b_1ab_2a^*b_3a\cdots b_{2k-2}a^*b_{2k-1}a)\quad\text{and}\quad\Ec(ab_1a^*b_2ab_3a^*\cdots b_{2k-2}ab_{2k-1}a^*).
\]
\end{remark}

In Theorem~\ref{thm:rdiagEq}, we prove equivalence of seven conditions for a $B$-valued random variable
$a$, including the condition for R-diagonalality in Definition~\ref{def:rdiag}.
(For some of these equivalences we refer to~\cite{SS01}.)
The six others are, in fact, $B$-valued analogues of those found in \cite{NSS01} for the case $B=\Cpx$.
One of these, condition~\ref{it:cum}, is in terms of Speicher's noncrossing cumulants for the pair $(a,a^*)$,
namely, that only those associated with even alternating sequences in $a$ and $a^*$ may be nonvanishing.
Another, condition~\ref{it:M2} of Theorem~\ref{thm:rdiagEq},
is that the matrix $\left(\begin{smallmatrix}0&a\\a^*&0\end{smallmatrix}\right)$ is free from $M_2(B)$
with amalgamation over the diagonal matrices with entries in $B$,
while still another, condition~\ref{it:Pmoms}, is an easy reformulation of Definition~\ref{def:rdiag}.
The proofs of the various equivalences are similar to those found in \cite{NSS01}.

A unitary element of a $B$-valued $*$-noncommutative probability space $(A,\Ec)$ is, of course, an element $u\in A$
such that $u^*u=uu^*=1$.
A {\em Haar unitary element} is a unitary element satisfying $\Ec(u^k)=0$ for all integers $k\ge1$.
In the tracial, scalar-valued case, it is well known (see Proposition 2.6 of \cite{NSS01})
that being $R$-diagonal is equivalent to having the same $*$-distribution
as an element of the form $up$, where $u$ is Haar unitary and $p$ is self-adjoint
and such that $\{u,u^*\}$ and $\{p\}$ are free.
(In the case of a C$^*$-noncommutative probability space, one can also take $p\ge0$
--- this is well known, or see, for example, Corollary~\ref{cor:scalarpolar} for a proof.)
The analogous statement is not true in the tracial, algebra-valued case.
Example~\ref{ex:nofreepolar} provides a counter example.
However, in Proposition~\ref{prop:HUtheta} and Theorem~\ref{thm:C*pos},
we do characterize, in terms of cumulants, when an algebra-valued R-diagonal element has the same $B$-valued $*$-distribution
as a product $up$ with $p$ self-adjoint, with $u$ a $B$-normalizing Haar unitary element and with $\{u,u^*\}$ and $\{p\}$ free over $B$.
As an application, Corollary~\ref{cor:DTpolar} shows that the polar decomposition of the quasinilpotent DT-operator has this form.

The contents of the rest of the paper are as follows:
Section~\ref{sec:cums} briefly recalls the formulation from~\cite{NSS02} of Speicher's $B$-valued cumlants, introduces notation and proves
some straightforward results about traces and
about self-adjointness of $B$-valued distributions and cumulants.
Section~\ref{sec:Rdiag} deals with the equivalence of the seven conditions that characterize algebra-valued R-diagonal elements.
Section~\ref{sec:series} establishes formal power series relations involving $B$-valued alternating moments and $B$-valued alternating cumulants
of an R-diagonal element.
Section~\ref{sec:polar} proves conditions for traciality of $*$--distributions of algebra-valued R-diagonal elements and examines polar decompositions
and the like for R-diagonal elements.
Section~\ref{sec:circ} examines algebra-valued circular elements, which comprise a special case of algebra-valued R-diagonal elements;
the notion of these has appeared before, notably in work of {\'S}niady \cite{Sn03}.
This section also contains Example~\ref{ex:nofreepolar} concerning the polar decomposition, mentioned above.
Finally, in an appendix, we investigate the distribution (with respect to a trace) of the positive part of the operator in this example.

\section{Cumulants, traces, and $*$-distributions}
\label{sec:cums}

In this section, we 
briefly recall a formulation (from~\cite{NSS02}) of Speicher's theory~\cite{Sp98} of $B$-valued cumulants
and describe the notation we will use.
We also prove some straightforward results about traciality and self-adjointness.

Given a family $(a_i)_{i\in I}$ of random variables in a $B$-valued noncommutative probability space $(A,\Ec)$,
and given $j=(j(1),\ldots,j(n))\in\bigcup_{n\ge1}I^n$,
the family's {\em $B$-valued distribution} is the $B$-bimodular map $\Theta:B\langle (X_i)_{i\in I}\rangle\to B$ given by
\begin{equation}\label{eq:distr}
\Theta(b_0X_{i_1}b_1\cdots X_{i_n}b_n)=\Ec(b_0a_{i_1}b_1\cdots a_{i_n}b_n),
\end{equation}
where the domain of $\Theta$ is the universal algebra (over $\Cpx$) generated by a copy of $B$ and indeterminants $X_i$;
more formally, it is the algebraic free product (with amalgamtion over $\Cpx)$ of a copy of $B$
and copies of the polynomial algebra $\Cpx[X_i]$ with indeterminant $X_i$.
The corresponding {\em cumulant map} is a $\Cpx$-multilinear
map $\alpha_j:B^{n-1}\to B$.  These are defined, recursively, by the moment-cumulant formula
\begin{equation}\label{eq:alphacummom}
\Ec(a_{j(1)}b_1a_{j(2)}\cdots b_{n-1}a_{j(n)})=\sum_{\pi\in\NC(n)}\alphah_j(\pi)[b_1,\ldots,b_{n-1}],
\end{equation}
where $\NC(n)$ is the set of all noncrossing partitions of $\{1,\ldots,n\}$ and where for $\pi\in\NC(n)$,
$\alphah_j(\pi)$ is a multilinear map defined in terms of the cumulant maps $\alpha_{j'}$ for the $j'$ obtained by restricting
$j$ to the blocks of $\pi$.
In detail,
$\alphah_j(\pi)$ can be specified
(recursively) as follows.
If $\pi=1_n$, then
\begin{equation*}
\alphah_j(\pi)[b_1,\ldots,b_{n-1}]=\alpha_j(b_1,\ldots,b_{n-1}),
\end{equation*}
while if $\pi\ne1_n$ then, selecting an interval block $\{p,p+1,\ldots,p+q-1\}\in\pi$ with $p\ge1$ and $q\ge1$,
letting $\pi'\in\NC(n-q)$ be obtained by restricting $\pi$ to $\{1,\ldots,p-1\}\cup\{p+q,\ldots,n\}$ and renumbering to preserve order,
and letting $j'\in I^{n-q}$ and $j''\in I^q$ be
\begin{align}
j'&=(j(1),j(2),\ldots,j(p-1),\,j(p+q),j(p+q+1),\ldots,j(n)), \label{eq:j'} \\
j''&=(j(p),j(p+1),\ldots,j(p+q-1)), \label{eq:j''}
\end{align}
we have
\begin{multline}\label{eq:cumrec}
\alphah_j(\pi)[b_1,\ldots,b_{n-1}]= \\
=\begin{cases}
\begin{aligned}[b]
\alphah_{j'}(\pi')[b_1,\ldots,&b_{p-2}, \\
b_{p-1}&\alpha_{j''}(b_p,\ldots,b_{p+q-2})b_{p+q-1}, \\
&\hspace{7em} b_{p+q},\ldots,b_{n-1}],
\end{aligned}
&p\ge2,\,p+q-1<n \\
\alphah_{j'}(\pi')[b_1,\ldots,b_{p-2}]b_{p-1}\alpha_{j''}(b_p,\ldots,b_{n-1}),&p\ge2,\,p+q-1=n, \\
\alpha_{j''}(b_1,\ldots,b_{q-1})b_q\alphah_{j'}(\pi')[b_{q+1},\ldots,b_{n-1}],&p=1,\,q<n.
\end{cases}
\end{multline}

We will use the notation $\psi_j$ for the multilinear moment map
\begin{equation}\label{eq:psij}
\psi_j(b_1,\ldots,b_{n-1})=\Ec(a_{j(1)}b_1a_{j(2)}\cdots b_{n-1}a_{j(n)}).
\end{equation}

Given a tracial linear functional on $B$ and a $B$-valued noncommutative probability space $(A,\Ec)$,
we will now characterize, in terms of $B$-valued cumulants, when the composition $\tau\circ\Ec$ is tracial.

In the following lemma and its proof, we will use the notation $c$ for cyclic left permutations.
In particular, 
\[
\text{if }j=(j(1),\ldots,j(n))\in I^n\text{ then }c(j)=(j(2),j(3),\ldots,j(n),j(1)),
\]
and if $\pi\in\NC(n)$ for some $n$, then $c(\pi)\in\NC(n)$ is the permutation obtained from $\pi$ by applying the mapping
\[
j\mapsto\begin{cases} n,&j=1 \\j-1,&j>1\end{cases}
\]
to the underlying set $\{1,\ldots,n\}$;
so, for example, if $\pi=\{\{1,2\},\{3,4,5\}\}\in\NC(5)$, then $c(\pi)=\{\{1,5\},\{2,3,4\}\}$.

\begin{lemma}\label{lem:taualphapi}
Consider the $B$-valued distribution of a family $(a_i)_{i\in I}$ and the corresponding cumulant maps $\alpha_j$.
Suppose $\tau$ is a tracial linear functional on $B$.
Fix $n\ge2$ and suppose that for all $m\in\{1,2,\ldots,n-1\}$, all $j'\in I^m$ and all $b_1,\ldots,b_m\in B$,
we have 
\begin{equation}\label{eq:taualphaassump}
\tau(\alpha_{j'}(b_1,\ldots,b_{m-1})b_m)=\tau(b_1\alpha_{c(j')}(b_2,\ldots,b_m)).
\end{equation}
Then for every $\pi\in\NC(n)\backslash\{1_n\}$, $j\in I^n$ and $b_1,\ldots,b_n\in B$, we have
\begin{equation}\label{eq:taualphapi}
\tau(\alphah_j(\pi)[b_1,\ldots,b_{n-1}]b_n)=\tau(\alphah_{c(j)}(c(\pi))[b_2,\ldots,b_n]b_1).
\end{equation}
\end{lemma}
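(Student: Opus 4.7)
The plan is to proceed by induction on $n$. The base case $n=2$ is immediate: the only element of $\NC(2)\setminus\{1_2\}$ is $\pi=\{\{1\},\{2\}\}$, which gives $\alphah_j(\pi)[b_1]=\alpha_{j(1)}\,b_1\,\alpha_{j(2)}$, so~\eqref{eq:taualphapi} reduces to the traciality of $\tau$ on $B$ (noting $c(\pi)=\pi$ and $c(j)=(j(2),j(1))$).

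For the inductive step, given $\pi\in\NC(n)\setminus\{1_n\}$, I would pick an interval block $V=\{p,\ldots,p+q-1\}\in\pi$ of size $q\le n-1$, which exists because $\pi$ has at least two blocks. Applying the appropriate case of the recursion~\eqref{eq:cumrec} rewrites $\alphah_j(\pi)[b_1,\ldots,b_{n-1}]$ in terms of the smaller $\alphah_{j'}(\pi')$ with $\pi'\in\NC(n-q)$ and the single-block cumulant $\alpha_{j''}$, where $j'$ and $j''$ are as in~\eqref{eq:j'} and~\eqref{eq:j''}. After multiplying by $b_n$ and applying $\tau$, the plan is to combine three ingredients: traciality of $\tau$ on $B$; the hypothesis~\eqref{eq:taualphaassump} applied to the single-block cumulant $\alpha_{j''}$ (valid since $q\le n-1$); and the induction hypothesis applied to $\pi'$ when $\pi'\ne 1_{n-q}$. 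The degenerate case $\pi'=1_{n-q}$, in which $\pi$ has exactly two blocks, would be handled by a direct double invocation of~\eqref{eq:taualphaassump}. The resulting expression should then reassemble, via~\eqref{eq:cumrec} applied on the $c(\pi)$ side for a suitably chosen interval block of $c(\pi)$, into the right-hand side of~\eqref{eq:taualphapi}.

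The case analysis is driven by the position of $V$ relative to $\{1,n\}$. When $p\ge 2$, the shifted block $\{p-1,\ldots,p+q-2\}$ is an interval block of $c(\pi)$, so the same (or an adjacent) case of~\eqref{eq:cumrec} applies on the $c(\pi)$ side, and the argument is essentially symmetric between the two sides. The delicate case is $p=1$: there $V$ becomes the wrap-around block $\{n,1,\ldots,q-1\}$ of $c(\pi)$, which is no longer an interval block, so on the $c(\pi)$ side I would instead select an interval block originating in $\pi'$ (which remains an interval block after the cyclic shift, since all its indices lie in $\{q+1,\ldots,n\}$ and so in $\{2,\ldots,n\}$). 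Matching the two sides then requires using~\eqref{eq:taualphaassump} precisely to cycle the index vector $j''$ of the wrap-around block.

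I expect the $p=1$ wrap-around case to be the main obstacle, since the interval blocks chosen on the two sides of~\eqref{eq:taualphapi} differ, and reconciling the resulting expressions requires careful tracking of where traciality of $\tau$ on $B$ interacts with the recursion and where~\eqref{eq:taualphaassump} is invoked on $\alpha_{j''}$. The underlying combinatorics are nevertheless standard noncrossing-partition manipulations and should yield to a methodical case analysis.
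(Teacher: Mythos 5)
Your overall strategy---induction on $n$, peeling off an interval block via the recursion \eqref{eq:cumrec}, and combining the trace property of $\tau$ with the hypothesis \eqref{eq:taualphaassump} and the induction hypothesis (which subsumes the case $\pi'=1_m$)---is the same as the paper's, and your handling of the base case and of blocks with $p\ge2$ matches the paper's case analysis. The genuine gap is the $p=1$ case. You correctly identify it as the delicate one, but you do not resolve it: you only assert that the wrap-around block $\{n,1,\ldots,q-1\}$ of $c(\pi)$ can somehow be reconciled with a decomposition of $\alphah_{c(j)}(c(\pi))$ taken along a \emph{different} interval block, and that you ``expect'' this to work. As written this is not an argument. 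Once the two sides of \eqref{eq:taualphapi} are expanded along non-corresponding interval blocks, matching them requires either a well-definedness statement for $\alphah_{\cdot}(\cdot)$ plus a further induction, or an explicit computation you have not supplied; moreover \eqref{eq:taualphaassump} is a statement about $\tau$ applied to a full product, so invoking it ``to cycle the index vector $j''$ of the wrap-around block'' while that cumulant sits inside a larger expression is not directly licensed.

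The observation you are missing is that the $p=1$ case never needs to arise. Since $\pi\ne1_n$, the collection of blocks $W$ of $\pi$ with $\min W\ge2$ is nonempty; choosing among these a block $V$ that minimizes $\max V-\min V$, the noncrossing property forces $V$ to be an interval (any block meeting the gap strictly between $\min V$ and $\max V$ would be trapped inside and be a smaller candidate). So one may always take the interval block $\{p,\ldots,p+q-1\}$ with $p\ge2$; then $\{p-1,\ldots,p+q-2\}$ is again an interval block of $c(\pi)$, both sides of \eqref{eq:taualphapi} decompose along corresponding blocks, and the only boundary subtleties are whether $p=2$ (where the block becomes initial in $c(\pi)$, so the third case of \eqref{eq:cumrec} is used together with one application of the trace property of $\tau$) and whether $p+q-1=n$. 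This is precisely the four-way case split in the paper's proof. With that modification your plan becomes the paper's proof; without it, the $p=1$ branch remains an unproved step.
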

\begin{proof}
We proceed by induction on $n$.
To begin, if $n=2$, then $\pi=\{\{1\},\{2\}\}$ and using~\eqref{eq:cumrec} we have
\[
\tau(\alphah_j(\pi)[b_1]b_2)=\tau(\alpha_{(j(1))}b_1\alpha_{(j(2))}b_2)
=\tau(b_1\alpha_{(j(2))}b_2\alpha_{(j(1))})=\tau(b_1\alphah_{c(j)}(\pi)[b_2]),
\]
as required.
Assume $n\ge3$.
By the induction hypothesis, for every $m\in\{1,\ldots,n-1\}$, $j'\in I^m$ and $\pi'\in\NC(m)$, including,
by the original hypothesis~\eqref{eq:taualphaassump},
the case $\pi'=1_m$, we have
\begin{equation}\label{eq:taualphapi'}
\tau(\alphah_{j'}(\pi')[b_1,\ldots,b_{m-1}]b_m)=\tau(\alphah_{c(j')}(c(\pi'))[b_2,\ldots,b_m]b_1).
\end{equation}
Since $\pi\ne1_n$, there is an interval block $\{p,p+1,\ldots,p+q-1\}\in\pi$ with $p\ge2$ and $q\ge1$.
Let $j'$ and $j''$ be as in \eqref{eq:j'}--\eqref{eq:j''} and let $\pi'\in\NC(n-q)$ be as described above those equations.
If $p+q-1<n$ and $p\ge3$, then 
\begin{align*}
\tau(\alphah_j(&\pi)[b_1,\ldots,b_{n-1}]b_n) \\
&=\tau(\alphah_{j'}(\pi')[b_1,\ldots,b_{p-2},b_{p-1}\alpha_{j''}(b_p,\ldots,b_{p+q-2})b_{p+q-1},b_{p+q},\ldots,b_{n-1}]b_n) \\
&=\tau(\alphah_{c(j')}(c(\pi'))[b_2,\ldots,b_{p-2},b_{p-1}\alpha_{j''}(b_p,\ldots,b_{p+q-2})b_{p+q-1},b_{p+q},\ldots,b_n]b_1) \\
&=\tau(\alphah_{c(j)}(c(\pi))[b_2,\ldots,b_n]b_1),
\end{align*}
where we have used, respectively, the first case on the right-hand-side of~\eqref{eq:cumrec}, \eqref{eq:taualphapi'}
and again the first case on the right-hand-side of~\eqref{eq:cumrec}.
If $p+q-1<n$ and $p=2$, then
\begin{align*}
\tau(\alphah_j(\pi)[b_1,\ldots,b_{n-1}]b_n)&=\tau(\alphah_{j'}(\pi')[b_1\alpha_{j''}(b_2,\ldots,b_{q})b_{q+1},b_{q+2},\ldots,b_{n-1}]b_n) \\
&=\tau(\alphah_{c(j')}(c(\pi'))[b_{q+2},\ldots,b_n]b_1\alpha_{j''}(b_2,\ldots,b_{q})b_{q+1}) \displaybreak[2] \\
&=\tau(\alpha_{j''}(b_2,\ldots,b_{q})b_{q+1}\alphah_{c(j')}(c(\pi'))[b_{q+2},\ldots,b_n]b_1) \\
&=\tau(\alphah_{c(j)}(c(\pi))[b_2,\ldots,b_n]b_1),
\end{align*}
where we have used, respectively, the first case on the right-hand-side of~\eqref{eq:cumrec}, \eqref{eq:taualphapi'},
the trace property of $\tau$
and the third case on the right-hand-side of~\eqref{eq:cumrec}.
The case of $p\ge3$ and $p+q-1=n$ is done similarly, while if $p=2$ and $q=n-1$, then
\begin{align*}
\tau(\alphah_j(\pi)[b_1,\ldots,b_{n-1}]b_n)&=\tau(\alpha_{j(1)}b_1\alpha_{j''}(b_2,\ldots,b_{n-1})b_n) \\
&=\tau(\alpha_{j''}(b_2,\ldots,b_{n-1})b_n\alpha_{j(1)}b_1) \\
&=\tau(\alphah_{c(j)}(c(\pi))[b_2,\ldots,b_n]b_1),
\end{align*}
where we have used, respecively,
the second case on the right-hand-side of~\eqref{eq:cumrec},
the trace property of $\tau$
and the third case on the right-hand-side of~\eqref{eq:cumrec}.
Thus,~\eqref{eq:taualphapi} holds and the lemma is proved.
\end{proof}

\begin{prop}\label{prop:trace}
Let $\Theta$ be the $B$-valued distribution of a family $(a_i)_{i\in I}$ with corresponding moment maps $\psi_j$
and cumulant maps $\alpha_j$.
Suppose $\tau$ is a tracial linear functional on $B$.
Then the following are equivalent:
\begin{enumerate}[label=(\roman*),labelwidth=3ex,leftmargin=30pt]
\item\label{it:tracial} the linear functional $\tau\circ\Theta$ on $B\langle X_i\mid i\in I\rangle$ is tracial,
\item\label{it:psitrace} $\forall n\ge2$ $\forall j\in I^n$ $\forall b_1,\ldots,b_n\in B$, we have
\begin{equation}\label{eq:taupsi}
\tau(\psi_j(b_1,\ldots,b_{n-1})b_n)=\tau(b_1\psi_{c(j)}(b_2,\ldots,b_n)),
\end{equation}
\item\label{it:alphatrace} $\forall n\ge1$ $\forall j\in I^n$ $\forall b_1,\ldots,b_n\in B$, we have
\begin{equation}\label{eq:taualpha}
\tau(\alpha_j(b_1,\ldots,b_{n-1})b_n)=\tau(b_1\alpha_{c(j)}(b_2,\ldots,b_n)).
\end{equation}
\end{enumerate}
\end{prop}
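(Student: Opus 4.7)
The plan is to prove the two equivalences (i) $\Leftrightarrow$ (ii) and (ii) $\Leftrightarrow$ (iii), treating the first as bookkeeping at the level of $B\langle X_i\mid i\in I\rangle$ and the second as an induction that bootstraps off of Lemma~\ref{lem:taualphapi} together with moment-cumulant inversion.

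For (i) $\Leftrightarrow$ (ii): By $\Cpx$-bilinearity, the traciality of $\tau\circ\Theta$ reduces to $\tau(\Theta(pq))=\tau(\Theta(qp))$ on pairs of monomials. Taking $p=X_{j(1)}$ and $q=b_1X_{j(2)}\cdots b_{n-1}X_{j(n)}b_n$ yields exactly~\eqref{eq:taupsi}, so (i)$\Rightarrow$(ii). Conversely, any cyclic rotation of a monomial in $B\langle X_i\mid i\in I\rangle$ is generated by single-variable shifts (covered by (ii)) together with rotations of $B$-coefficients (covered by the traciality of $\tau$ on $B$), so (ii) forces $\tau(\Theta(pq))=\tau(\Theta(qp))$ for all monomials, giving (i).

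For (iii) $\Rightarrow$ (ii): Condition~(iii) is exactly the hypothesis~\eqref{eq:taualphaassump} of Lemma~\ref{lem:taualphapi} at every length, so the lemma supplies
\[
\tau\bigl(\alphah_j(\pi)[b_1,\ldots,b_{n-1}]b_n\bigr)=\tau\bigl(\alphah_{c(j)}(c(\pi))[b_2,\ldots,b_n]b_1\bigr)
\]
for every $\pi\in\NC(n)\setminus\{1_n\}$. Adjoining (iii) itself (the case $\pi=1_n$) extends this identity to all $\pi\in\NC(n)$. Using the traciality of $\tau$ on $B$ to move $b_1$ to the front on the right-hand side, summing over $\pi\in\NC(n)$, invoking the bijectivity of $c$ on $\NC(n)$, and applying~\eqref{eq:alphacummom} on both sides yields~\eqref{eq:taupsi}.

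For (ii) $\Rightarrow$ (iii): I proceed by induction on $n$. The case $n=1$ is immediate from the traciality of $\tau$ on $B$. For $n\ge 2$, assume (iii) at all smaller lengths; Lemma~\ref{lem:taualphapi} then supplies the cyclic identity for $\alphah_j(\pi)$ at every $\pi\in\NC(n)\setminus\{1_n\}$. Rearranging~\eqref{eq:alphacummom} as
\[
\alpha_j(b_1,\ldots,b_{n-1})=\psi_j(b_1,\ldots,b_{n-1})-\sum_{\pi\in\NC(n)\setminus\{1_n\}}\alphah_j(\pi)[b_1,\ldots,b_{n-1}],
\]
multiplying by $b_n$ on the right and applying $\tau$, then using (ii) on the $\psi_j$ term, the lemma on each remaining summand, the traciality of $\tau$ on $B$, and finally the fact that $c$ restricts to a bijection of $\NC(n)\setminus\{1_n\}$ (since $c(1_n)=1_n$), one assembles~\eqref{eq:taualpha}. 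The main substantive work is already absorbed in Lemma~\ref{lem:taualphapi}; the remaining obstacle here is simply the careful bookkeeping of cyclic rotations on index tuples and noncrossing partitions in the induction step.
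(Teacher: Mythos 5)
Your proof is correct and follows essentially the same route as the paper: the same reduction of (i)$\Leftrightarrow$(ii) to cyclic shifts of monomials, and the same use of Lemma~\ref{lem:taualphapi} together with moment--cumulant inversion (summing over $\NC(n)$ for (iii)$\Rightarrow$(ii), and isolating $\alpha_j$ with an induction on $n$ for (ii)$\Rightarrow$(iii)). The only difference is that you spell out the (i)$\Leftrightarrow$(ii) bookkeeping, which the paper leaves as ``easily seen.''
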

\begin{proof}
The equivalence of~\ref{it:tracial} and~\ref{it:psitrace} is easily seen from the definition~\eqref{eq:psij} of $\psi_j$.

The proof of \ref{it:alphatrace} $\Longrightarrow$ \ref{it:psitrace} follows from the moment-cumulant
formula~\eqref{eq:alphacummom} and Lemma \ref{lem:taualphapi}.

We will prove \ref{it:psitrace} $\Longrightarrow$ \ref{it:alphatrace} using the moment-cumulant
formula~\eqref{eq:alphacummom} and Lemma~\ref{lem:taualphapi}.
Suppose \ref{it:psitrace} holds and let us show~\eqref{eq:taualpha} holds by induction on $n\ge1$.
The case $n=1$ is from the tracial property of $\tau$.
Fix $n_0\ge2$ and suppose~\eqref{eq:taualpha} holds for all $n<n_0$.
Then, by Lemma~\ref{lem:taualphapi}, for all $\pi\in\NC(n_0)\backslash\{1_{n_0}\}$, all $j\in I^{n_0}$ and all $b_1,\ldots,b_{n_0}\in B$, we have
\[
\tau(\alphah_j(\pi)[b_1,\ldots,b_{n_0-1}]b_{n_0})=\tau(\alphah_{c(j)}(c(\pi))[b_2,\ldots,b_{n_0}]b_1).
\]
Combining this with the moment-cumulant formula~\eqref{eq:alphacummom} and using~\eqref{eq:taupsi}, we get
\begin{align*}
\tau(\alpha_j(b_1,\ldots&,b_{n_0-1})b_{n_0}) \\
&=\tau(\psi_j(b_1,\ldots,b_{n_0-1})b_{n_0})
-\sum_{\pi\in\NC(n_0)\backslash\{1_{n_0}\}}\tau(\alphah_j(\pi)[b_1,\ldots,b_{n_0-1}]b_{n_0}) \\
&=\tau(\psi_{c(j)}(b_2,\ldots,b_{n_0})b_1)
-\sum_{\pi\in\NC(n_0)\backslash\{1_{n_0}\}}\tau(\alphah_{c(j)}(c(\pi))[b_2,\ldots,b_{n_0}]b_1) \\
&=\tau(\alpha_{c(j)}(b_2,\ldots,b_{n_0})b_1),
\end{align*}
as required.
\end{proof}

We now turn to questions of self-adjointness.
Suppose $B$ is a $*$-algebra and
consider a family $(a_i)_{i\in I}$ of $B$-valued random variables in a $B$-valued noncommutative probability space $(A,\Ec)$.
Consider an involution $s:I\to I$.
Let $B\langle X_i\mid i\in I\rangle$ be endowed with the $*$-algebra structure
coming from the $*$-operation on $B$ and by setting $X_i^*=X_{s(i)}$ for all $i\in I$.
For each $n\ge1$, let $\st:I^n\to I^n$ be defined by 
\[
\st((j(1),j(2),\ldots,j(n))=(s(j(n)),\ldots,s(j(2)),s(j(1))).
\]

\begin{lemma}\label{lem:alphapi*}
Let $\alpha_j$ be the cumulant maps of the family $(a_i)_{i\in I}$.
Fix $n\ge2$ and suppose that for all $m\in\{1,2,\ldots,n-1\}$, all $j'\in I^m$
and all $b_1,\ldots,b_{m-1}\in B$, we have
\[
\alpha_{j'}(b_1,\ldots,b_{m-1})^*=\alpha_{\st(j')}(b_{m-1}^*,\ldots,b_1^*).
\]
Then for all $\pi\in\NC(n)\backslash\{1_n\}$, all $j\in I^n$ and all $b_1,\ldots,b_{n-1}\in B$, we have
\[
\alphah_j(\pi)[b_1,\ldots,b_{n-1}]^*=\alpha_j(r(\pi))[b_{n-1}^*,\ldots,b_1^*],
\]
where $r(\pi)$ is the noncrossing partition obtained from $\pi$ by applying the reflection $j\mapsto n-j$ to
all elements in the underlying set $\{1,\ldots,n-1\}$.
\end{lemma}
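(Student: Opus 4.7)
The plan is to prove the lemma by induction on $n \geq 2$, using the recursive definition~\eqref{eq:cumrec} of $\alphah_j(\pi)$. The base case $n = 2$ is handled directly: the only $\pi \in \NC(2)\setminus\{1_2\}$ is $\pi_0 = \{\{1\},\{2\}\}$, which is fixed by $r$, and from the third (or equivalently second) case of~\eqref{eq:cumrec} we have $\alphah_j(\pi_0)[b_1] = \alpha_{(j(1))}\,b_1\,\alpha_{(j(2))}$. Applying $*$ reverses the product and conjugates each factor, and the lemma's hypothesis at $m = 1$ gives $\alpha_{(j(k))}^* = \alpha_{(s(j(k)))}$, so the right-hand side matches $\alphah_{\st(j)}(r(\pi_0))[b_1^*]$ as desired.

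For the inductive step with $n \geq 3$, I would pick an interval block $E = \{p, p+1, \ldots, p+q-1\}$ of $\pi$ (which exists because $\pi \neq 1_n$, and note $q < n$), and let $\pi' \in \NC(n-q)$, $j' \in I^{n-q}$, $j'' \in I^q$ be as in~\eqref{eq:j'}--\eqref{eq:j''}. Expand $\alphah_j(\pi)[b_1,\ldots,b_{n-1}]$ using whichever of the three cases of~\eqref{eq:cumrec} applies. Taking the adjoint reverses the order of the factors in that expression and applies $*$ to each factor. To the inner factor $\alpha_{j''}(b_p,\ldots,b_{p+q-2})^*$ apply the lemma's hypothesis at level $m = q \leq n - 1$; to the outer factor $\alphah_{j'}(\pi')[\cdots]^*$ apply the induction hypothesis at level $n - q < n$. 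This produces a product involving $\alpha_{\st(j'')}$ and $\alphah_{\st(j')}(r(\pi'))$ with the $b_k^*$ in reversed positions.

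The matching step is the geometric observation that $r$ sends the interval block $E$ to the interval block $r(E) = \{n+2-p-q, \ldots, n+1-p\}$ of $r(\pi)$, and that the reflection $r$ preserves Case 1 of~\eqref{eq:cumrec} while interchanging Cases 2 and 3: if $E$ is interior (Case 1: $p \geq 2$, $p+q-1 < n$) then so is $r(E)$; if $E$ is an initial block (Case 3: $p=1$, $q<n$) then $r(E)$ is terminal (Case 2), and conversely. Moreover, under the order-preserving identification of the complementary set $\{1,\ldots,p-1\} \cup \{p+q,\ldots,n\}$ with $\{1,\ldots,n-q\}$, the reflection on $\{1,\ldots,n\}$ restricts to the analogous reflection on $\{1,\ldots,n-q\}$, so $r(\pi')$ coincides with the restriction $(r(\pi))'$ obtained by peeling $r(E)$ off $r(\pi)$, and $\st(j')$ coincides with the analogous $j$-datum for $r(\pi)$ after peeling.

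The main obstacle is simply the case-by-case bookkeeping: checking that for each of the three possibilities for the interval block $E$, the adjoint of the chosen case of~\eqref{eq:cumrec} lines up term-by-term with the corresponding (swapped) case of~\eqref{eq:cumrec} applied to $r(\pi)$, $\st(j)$ and $(b_{n-1}^*,\ldots,b_1^*)$. No new ideas are needed beyond this verification and the two applications of the induction hypothesis and the lemma's hypothesis described above.
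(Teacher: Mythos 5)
Your approach is the same as the paper's: the paper's entire proof is the single sentence that the result ``follows in a straightforward manner by induction \dots from the recursion formula~\eqref{eq:cumrec},'' and your outline is a correct fleshing-out of exactly that induction (the paper inducts on the number of blocks of $\pi$ rather than on $n$, but this makes no real difference). One small point to patch: when you expand via an interval block $E$ of length $q$, the outer factor is $\alphah_{j'}(\pi')[\cdots]$ with $\pi'\in\NC(n-q)$, and you say you apply the induction hypothesis to it; but if $\pi$ has exactly two blocks then $\pi'=1_{n-q}$, which is excluded from the lemma's conclusion, so there the outer factor is the cumulant $\alpha_{j'}$ itself and you must instead invoke the lemma's \emph{hypothesis} at $m=n-q\le n-1$ (exactly as the paper does explicitly in the parallel Lemma~\ref{lem:taualphapi}, where the case $\pi'=1_m$ is folded in by the original assumption). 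With that one-line amendment your argument is complete; your verification that the reflection fixes Case 1 of~\eqref{eq:cumrec} and swaps Cases 2 and 3, and that reflection commutes with peeling off the interval block, is the correct bookkeeping. (You have also, sensibly and silently, read the right-hand side of the conclusion as $\alphah_{\st(j)}(r(\pi))[b_{n-1}^*,\ldots,b_1^*]$ and the reflection as $j\mapsto n+1-j$ on $\{1,\ldots,n\}$, correcting evident typos in the statement.)
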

\begin{proof}
This follows in a straightforward manner by induction on the number of blocks in $\pi$, from the recursion formula~\eqref{eq:cumrec}
for cumulants.
\end{proof}

The following proposition gives a criterion for self-adjointness of the distribution of a family of $B$-valued random variables
in terms of properties of the $B$-valued cumulant maps.
\begin{prop}\label{prop:alpha*}
Let $\Theta$ be the $B$-valued distribution of a family $(a_i)_{i\in I}$ with corresponding moment maps $\psi_j$
and cumulant maps $\alpha_j$.
The following are equivalent:
\begin{enumerate}[label=(\roman*),labelwidth=3ex,leftmargin=30pt]
\item\label{it:Thetasa} the linear mapping $\Theta:B\langle X_i\mid i\in I\rangle\to B$ is self-adjoint.
\item\label{it:psi*} $\forall n\ge1$ $\forall j\in I^n$ $\forall b_1,\ldots,b_{n-1}\in B$, we have
\[
\psi_j(b_1,\ldots,b_{n-1})^*=\psi_{\st(j)}(b_{n-1}^*,\ldots,b_1^*),
\]
\item\label{it:alpha*} $\forall n\ge1$ $\forall j\in I^n$ $\forall b_1,\ldots,b_n\in B$, we have
\[
\alpha_j(b_1,\ldots,b_{n-1})^*=\alpha_{\st(j)}(b_{n-1}^*,\ldots,b_1^*).
\]
\end{enumerate}
Moreover, if $(A,\Ec)$ is a $*$-noncommutative probability space and if
$a_i^*=a_{s(i)}$ for all $i\in I$, then the above conditions are satisfied.
\end{prop}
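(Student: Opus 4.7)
The plan is to handle the three equivalences in the natural order \ref{it:Thetasa}$\Leftrightarrow$\ref{it:psi*}$\Leftrightarrow$\ref{it:alpha*}, then dispatch the moreover clause.

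For \ref{it:Thetasa}$\Leftrightarrow$\ref{it:psi*}, I would simply unwind the definition of $\Theta$ on the polynomial algebra $B\langle X_i\mid i\in I\rangle$, whose $*$-structure is the one specified just before Lemma~\ref{lem:alphapi*}. Every element is a $B$-linear combination of monomials $X_{j(1)}b_1X_{j(2)}\cdots b_{n-1}X_{j(n)}$ (conjugated on both sides by elements of $B$, which are handled by $B$-bilinearity and the fact that $\Theta|_B=\id_B$ is automatically $*$-preserving since $B$ is a $*$-algebra). The $*$ of such a monomial is $X_{\st(j)(1)}b_{n-1}^*X_{\st(j)(2)}\cdots b_1^*X_{\st(j)(n)}$ by the defining relation $X_i^*=X_{s(i)}$, so $\Theta$ is self-adjoint if and only if $\psi_j(b_1,\ldots,b_{n-1})^*=\psi_{\st(j)}(b_{n-1}^*,\ldots,b_1^*)$ for every $n\ge 1$, $j\in I^n$ and $b_i\in B$.

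For \ref{it:alpha*}$\Rightarrow$\ref{it:psi*}, I would apply Lemma~\ref{lem:alphapi*}: its hypothesis is exactly \ref{it:alpha*}, and its conclusion gives $\alphah_j(\pi)[b_1,\ldots,b_{n-1}]^*=\alphah_{\st(j)}(r(\pi))[b_{n-1}^*,\ldots,b_1^*]$ for every $\pi\in\NC(n)\setminus\{1_n\}$ (the case $\pi=1_n$ is \ref{it:alpha*} itself). Since the reflection $r$ is a bijection of $\NC(n)$, summing over $\pi\in\NC(n)$ and invoking the moment-cumulant formula~\eqref{eq:alphacummom} produces~\ref{it:psi*}. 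Conversely, for \ref{it:psi*}$\Rightarrow$\ref{it:alpha*} I would induct on $n$. The base case $n=1$ reduces to $\Ec(a_{j(1)})^*=\Ec(a_{s(j(1))})$, which is precisely the $n=1$ instance of~\ref{it:psi*}. For the inductive step, the hypothesis makes Lemma~\ref{lem:alphapi*} applicable at level $n$, so after solving the moment-cumulant formula for $\alpha_j(b_1,\ldots,b_{n-1})=\psi_j(b_1,\ldots,b_{n-1})-\sum_{\pi\in\NC(n)\setminus\{1_n\}}\alphah_j(\pi)[b_1,\ldots,b_{n-1}]$, taking $*$, and using both \ref{it:psi*} and Lemma~\ref{lem:alphapi*}, the right-hand side reassembles (via the same bijection $r$ of $\NC(n)$) into $\alpha_{\st(j)}(b_{n-1}^*,\ldots,b_1^*)$.

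Finally, for the moreover clause, if $(A,\Ec)$ is a $*$-noncommutative probability space and $a_i^*=a_{s(i)}$, then
\[
\psi_j(b_1,\ldots,b_{n-1})^*=\Ec(a_{j(1)}b_1\cdots b_{n-1}a_{j(n)})^*=\Ec(a_{s(j(n))}b_{n-1}^*\cdots b_1^*a_{s(j(1))})=\psi_{\st(j)}(b_{n-1}^*,\ldots,b_1^*),
\]
so~\ref{it:psi*} holds. The main obstacle, such as it is, is purely bookkeeping: checking that the reflection $r$ on $\NC(n)$ is compatible with the $*$-operation in the way the recursion~\eqref{eq:cumrec} requires, so that the sum over noncrossing partitions really does reassemble correctly after conjugation. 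Lemma~\ref{lem:alphapi*} already handles the single-$\pi$ version of this, so the work here is just to chain it through the induction.
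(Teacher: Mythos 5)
Your proposal is correct and follows essentially the same route as the paper, which disposes of (i)$\Leftrightarrow$(ii) as immediate from the definitions and of (ii)$\Leftrightarrow$(iii) by the moment-cumulant formula together with Lemma~\ref{lem:alphapi*} (note that you have silently corrected the evident typo in that lemma's conclusion, which should read $\alphah_{\st(j)}(r(\pi))$ as you use it). The induction on $n$ for (ii)$\Rightarrow$(iii) and the direct computation for the ``moreover'' clause are exactly the intended details.
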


\begin{defi}\label{def:*distr}
If the conditions in the last sentence of Proposition~\ref{prop:alpha*} hold, then we say $\Theta$ is the {\em $B$-valued $*$-distribution}
of the family $(a_i)_{i\in I}$.
\end{defi}

\begin{proof}[Proof of Proposition~\ref{prop:alpha*}.]
The equivalence of~\ref{it:Thetasa} and~\ref{it:psi*} follows directly from the definitions.
The equivalence of~\ref{it:psi*} and~\ref{it:alpha*} is a straightforward application of the moment-cumulant formula
and Lemma~\ref{lem:alphapi*}.
\end{proof}

We now suppose that $B$ is a C$^*$-algebra and $\Theta$ is
a $B$-valued $*$-distribution, as considered above.
We say that $\Theta$ is {\em positive} if $\Theta(p^*p)\ge0$ for every $p\in B\langle X_i\mid i\in I\rangle$.
It can be difficult to verify positivity of a $*$-distribution $\Theta$ only from knowing the cumulants,
though there are special cases that are exceptions to this statement (for example, the $B$-valued semicircular
and $B$-valued circular elements, discussed in Section~\ref{sec:circ}).

\section{Algebra-valued R-diagonal elements}
\label{sec:Rdiag}

Let $B$ be a unital $*$-algebra and let $(A,\Ec)$ be a $B$-valued $*$-noncommutative probability space.

It is convenient to introduce here some notation we will use below.
By an {\em enlargement} of $(A,\Ec)$, we will mean a $B$-valued $*$-noncommutative probability space $(\At,\Ect)$
with an embedding $A\hookrightarrow\At$ so that the diagram 
\[
\begin{matrix}
A&\hookrightarrow&\At \\
\cup&&\cup \\
B&=&B
\end{matrix}
\]
commutes and $\Ect\restrict_A=\Ec$.

For an element $a\in A$, 
consider the following sets of words and their centerings, formed from alternating $a$ and $a^*$, with elements of $B$ between:
\begin{align}
\Pc_{11}&=\{b_0ab_1a^*b_2ab_3a^*b_4\cdots ab_{2k-1}a^*b_{2k}ab_{2k+1}\mid k\ge0,\,b_0,\ldots,b_{2k+1}\in B\} \label{eq:P11} \\
\Pc_{22}&=\{b_0a^*b_1ab_2a^*b_3ab_4\cdots a^*b_{2k-1}ab_{2k}a^*b_{2k+1}\mid k\ge0,\,b_0,\ldots,b_{2k+1}\in B\} \label{eq:P22} \\
\Pc_{12}&=\begin{aligned}[t]\bigl\{&b_0ab_1a^*b_2ab_3a^*b_4\cdots ab_{2k-1}a^*b_{2k} \\
 &\quad-\Ec(b_0ab_1a^*b_2ab_3a^*b_4\cdots ab_{2k-1}a^*b_{2k})\mid k\ge1,\,b_0,\ldots,b_{2k}\in B\bigr\} 
 \end{aligned} \label{eq:P12} \\
\Pc_{21}&=\begin{aligned}[t]\bigl\{&b_0a^*b_1ab_2a^*b_3ab_4\cdots a^*b_{2k-1}ab_{2k} \\
 &\quad-\Ec(b_0a^*b_1ab_2a^*b_3ab_4\cdots a^*b_{2k-1}ab_{2k})\mid k\ge1,\,b_0,\ldots,b_{2k}\in B\bigr\}.
 \end{aligned} \label{eq:P21}
\end{align}
Note that $\Pc_{1x}$ means ``starting with $a$'' and $\Pc_{2x}$ means ``starting with $a^*$'', while 
$\Pc_{x1}$ means ``ending with $a$'' and $\Pc_{x2}$ means ``ending with $a^*$.''

When we write that a unitary $u$ {\em normalizes} $B$ in item~\ref{it:u0p} below, we mean that $ubu^*\in B$ and $u^*bu\in B$ for all $b\in B$.

The next result is essentially, a $B$-valued version of Theorem 1.2 of~\cite{NSS01}.
The equivalence of \ref{it:Pmoms}, \ref{it:u0} and \ref{it:cum} was proved in~\cite{SS01};
here we will prove the equivalence of \ref{it:RDiag}--\ref{it:M2}.

\begin{thm}\label{thm:rdiagEq}
Let $a\in A$.
Then the following are equivalent:
\begin{enumerate}[label=(\alph*),leftmargin=20pt]
\item\label{it:RDiag} $a$ is $B$-valued R-diagonal.
\item\label{it:Pmoms}
We have
\begin{equation}\label{eq:Exx0}
\Ec(x_1x_2\cdots x_n)=0
\end{equation}
whenever $n\in\Nats$, $i_0,i_1,\ldots,i_n\in\{1,2\}$ and $x_j\in\Pc_{i_{j-1},i_j}$.

\item\label{it:u0p}
There is a $B$-valued $*$-noncommuative probability space $(\At,\Ect)$, an element $p\in\At$ and a unitary $u\in\At$ such that
\begin{enumerate}[label=(\roman*)]
\item $u$ normalizes $B$,
\item\label{it:pword0thm} $\Ect(p)=0$ and, for all $k\ge1$ and all $b_1,\ldots,b_{2k}\in B$, we have
\[
\Ect(pb_1p^*b_2pb_3p^*b_4\cdots pb_{2k-1}p^*b_{2k}p)=0,
\]
\item $\{u,u^*\}$ is free from $\{p,p^*\}$ with respect to $\Ect$,
\item $\Ect(u)=0$,
\item $a$ and $up$ have the same $B$-valued $*$-distribution.
\end{enumerate}

\item\label{it:u0}
There is an enlargement $(\At,\Ect)$ of $(A,\Ec)$ and a unitary $u\in\At$ such that
\begin{enumerate}[label=(\roman*)]
\item\label{it:u0ucommB} $u$ commutes with every element of $B$,
\item\label{it:u0free} $\{u,u^*\}$ is free from $\{a,a^*\}$ with respect to $\Ect$,
\item\label{it:u0u0} $\Ect(u^k)=0$ for all $k\in\Nats$ (namely, $u$ is Haar unitary),
\item\label{it:u0distr} $a$ and $ua$ have the same $B$-valued $*$-distribution.
\end{enumerate}

\item\label{it:u}
If $(\At,\Ect)$ is an enlargement of $(A,\Ec)$ and $u\in\At$ is a  unitary such that
\begin{enumerate}[label=(\roman*)]
\item\label{it:ucommB} $u$ commutes with every element of $B$,
\item\label{it:ufreefroma} $\{u,u^*\}$ is free from $\{a,a^*\}$ with respect to $\Ect$,
\end{enumerate}
then $a$ and $ua$ have the same $B$-valued $*$-distribution.

\item\label{it:M2}
Consider the subalgebra
\[
B^{(2)}:=\left\{\left(\begin{matrix} b^{(1)}&0\\0&b^{(2)}\end{matrix}\right)\;\bigg|\;b^{(1)},b^{(2)}\in B\right\}
\subseteq M_2(A),
\]
the conditional expectation $\Ec^{(2)}:M_2(A)\to B^{(2)}$ given by
\[
\Ec^{(2)}\left(\left(\begin{matrix} a_{11}&a_{12}\\a_{21}&a_{22}\end{matrix}\right)\right)
=\left(\begin{matrix} \Ec(a_{11})&0\\0&\Ec(a_{22})\end{matrix}\right),
\]
the subalgebra $M_2(B)\subseteq M_2(A)$ and the operator
\[
z=\left(\begin{matrix} 0&a\\a^*&0\end{matrix}\right).
\]
Then $\{z\}$ and $M_2(B)$ are free with respect to $\Ec^{(2)}$ (i.e., with amalgamation over $B^{(2)}$).

\item\label{it:cum}
Letting $a_1=a$ and $a_2=a^*$, for every $n\in\Nats$ and $j=(j(1),\ldots,j(n))\in\{1,2\}^n$,
the $B$-valued cumulant map $\alpha_j$ for the pair $(a_1,a_2)$ is equal to zero if $j$ is either not alternating or is not of even length,
i.e., if $j$ is not of the form $(1,2,1,2,\ldots,1,2)$ or $(2,1,2,1,\ldots,2,1)$.
\end{enumerate}
\end{thm}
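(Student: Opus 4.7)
The plan is to take the equivalence \ref{it:Pmoms} $\Longleftrightarrow$ \ref{it:u0} $\Longleftrightarrow$ \ref{it:cum} from~\cite{SS01} as given, and to attach the remaining four conditions \ref{it:RDiag}, \ref{it:u0p}, \ref{it:u}, \ref{it:M2} to this core, broadly following the strategy of~\cite{NSS01}. The equivalence \ref{it:RDiag} $\Longleftrightarrow$ \ref{it:Pmoms} is a direct unpacking: a product $x_1\cdots x_n$ with $x_j\in\Pc_{i_{j-1},i_j}$ concatenates to one of the words appearing in Definition~\ref{def:rdiag}, the segments drawn from $\Pc_{11},\Pc_{22}$ producing odd-length blocks of the maximal alternating interval partition and those drawn from $\Pc_{12},\Pc_{21}$ producing (already centered) even-length blocks. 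The vanishing of odd alternating moments in Definition~\ref{def:rdiag} (together with Remark~\ref{rem:a*Rdiag}) makes the centering of odd blocks automatic, so \eqref{eq:IPcond} translates term-by-term into \eqref{eq:Exx0}.

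For \ref{it:cum} $\Longleftrightarrow$ \ref{it:M2}, I would compute the $M_2(B)$-valued cumulants of $z$ over $B^{(2)}$ explicitly in terms of the $B$-valued cumulants of $(a,a^*)$. Because $z$ is block off-diagonal, each matrix entry of such a cumulant picks out a single $B$-valued cumulant of $(a,a^*)$ whose index sequence is forced to alternate between $1$ and $2$ and whose length is dictated by the matrix row and column positions. Freeness of $\{z\}$ from $M_2(B)$ with amalgamation over $B^{(2)}$ is equivalent to vanishing of those $M_2(B)$-valued cumulants of $z$ whose arguments include an entry from $M_2(B)\setminus B^{(2)}$; read in matrix coordinates, this coincides exactly with the alternation and even-length condition in~\ref{it:cum}.

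The implications involving a Haar-unitary factor proceed as follows. For \ref{it:u0} $\Longleftrightarrow$ \ref{it:u}: \ref{it:u} implies \ref{it:u0} upon realizing some $u$ with the required properties in an enlargement of $(A,\Ec)$ via a $B$-valued amalgamated free product; conversely, given $u$ as in~\ref{it:u}, compute the $B$-valued cumulants of $(ua,(ua)^*)$, using freeness to kill mixed cumulants and using the commutation $ub=bu$ to rearrange. Cumulants of $(u,u^*)$ vanish except on even alternating sequences by Haar unitarity, and~\ref{it:cum} then forces the $B$-valued $*$-distribution of $ua$ to agree with that of $a$. For \ref{it:u0} $\Longleftrightarrow$ \ref{it:u0p}: from~\ref{it:u0} take $p:=u^*a$, so that $up=a$, and derive the moment condition~\ref{it:pword0thm} directly from Definition~\ref{def:rdiag} after substituting $p=u^*a$ and using $u^*a\stackrel{d}{=}a$; conversely, from~\ref{it:u0p}, introduce a fresh Haar unitary $v$ commuting with $B$ that is $*$-free from $\{u,p\}$, and observe that $vu$ is again Haar unitary, is $*$-free from $p$, and satisfies $(vu)p\stackrel{d}{=}up\stackrel{d}{=}a$, giving~\ref{it:u0}.

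The main obstacle is verifying the $*$-freeness of $\{u,u^*\}$ and $\{p,p^*\}=\{u^*a,a^*u\}$ in the direction~\ref{it:u0}$\Longrightarrow$\ref{it:u0p}. One must show $\Ect$ vanishes on every alternating centered product of elements from these two sets; my approach would be to expand each $p$ or $p^*$ into $u^*a$ or $a^*u$, then collect the $u,u^*$ factors (which commute with $B$) between the $a,a^*$ letters and invoke~\ref{it:Pmoms} on the resulting word alternating in $a$ and $a^*$. Organizing this expansion so that the R-diagonal condition on $a$ applies at each stage --- and so that the intermediate $u,u^*$ words do not spoil the alternation --- is the principal combinatorial difficulty. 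A secondary, but purely bookkeeping, technicality is tracking the $B^{(2)}$-bimodule structure carefully in the cumulant computation for \ref{it:cum} $\Longleftrightarrow$ \ref{it:M2}.
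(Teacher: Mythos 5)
Your outline handles \ref{it:RDiag}$\Leftrightarrow$\ref{it:Pmoms} correctly, and your cumulant route to \ref{it:M2} is a legitimate alternative to the paper's direct moment computation (Lemma~\ref{lem:M2alg}), though it leans on the freeness-via-cumulants machinery of \cite{NSS02} and on a careful identification of the $B^{(2)}$-valued cumulants of $z$ with spacers from $M_2(B)$ in terms of the maps $\alpha_j$. The serious problem is your treatment of condition~\ref{it:u}. That condition quantifies over \emph{every} unitary $u$ commuting with $B$ and free from $\{a,a^*\}$; no Haar condition is imposed. Your step ``cumulants of $(u,u^*)$ vanish except on even alternating sequences by Haar unitarity'' therefore applies to none of the unitaries you must handle, and for a general such $u$ (for instance one with $\Ect(u)\ne0$) those cumulants are essentially arbitrary. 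This is exactly the hard implication of the theorem, and the paper spends most of its effort on it: starting from \ref{it:Pmoms} it adjoins an auxiliary Haar unitary $v$ free from $\{a,a^*,u,u^*\}$, shows $a$ and $va$ are identically $*$-distributed (hence so are $ua$ and $uva$), deduces that $ua$ satisfies \ref{it:Pmoms} because $uv$ \emph{is} a $B$-normalizing Haar unitary (Lemmas~\ref{lem:2.5} and~\ref{lem:2.4}), and only then compares $a$ with $ua$ via Remark~\ref{rem:l2.1}, using that an R-diagonal $*$-distribution is determined by its even alternating moments and that these are unchanged by multiplication by any $B$-commuting unitary free from $a$. Nothing in your proposal substitutes for this chain.

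The second gap is self-inflicted. For \ref{it:u0}$\Rightarrow$\ref{it:u0p} you set $p=u^*a$ and then must prove that $\{u,u^*\}$ is free from $\{u^*a,a^*u\}$ over $B$ --- a claim that does not follow from the hypotheses by any routine expansion: in patterns such as $(u^*a)b(u^*a)=u^*au^*ba$ the internal $u^*$ cannot be cancelled or moved past $a$, so the resulting words are genuinely mixed in $u$ and $a$ and condition~\ref{it:Pmoms} does not apply to them directly. The paper simply takes $p=a$: then $up=ua$ has the same $*$-distribution as $a$ by condition~\ref{it:u0distr} of~\ref{it:u0}, freeness of $\{u,u^*\}$ from $\{p,p^*\}=\{a,a^*\}$ is condition~\ref{it:u0free} of~\ref{it:u0}, and the required vanishing of $\Ec(a)$ and of the odd alternating moments follows in one line from $\Ec(a\cdots a)=\Ect(u)\,\Ec(a\cdots a)=0$. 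Your ``principal combinatorial difficulty'' disappears with this choice. Finally, your route from \ref{it:u0p} back to \ref{it:u0} produces a Haar unitary that is free from $p$ and only normalizes $B$, whereas \ref{it:u0} requires a Haar unitary commuting with $B$ and free from $\{a,a^*\}$ inside an enlargement of $(A,\Ec)$; the clean way to close the cycle is \ref{it:u0p}$\Rightarrow$\ref{it:Pmoms} via Lemma~\ref{lem:2.3}, as the paper does.
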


The rest of the section is devoted to the proof of \ref{it:RDiag}--\ref{it:M2}.

\begin{proof}[Proof of \ref{it:RDiag} $\Longleftrightarrow$ \ref{it:Pmoms}.]
This is clear, because every expectation on the left-hand-side of~\eqref{eq:IPcond} in Definition~\ref{def:rdiag}
is one of the expectations on the left-hand-side of~\eqref{eq:Exx0}, and vice-versa
and because the vanishing of odd, alternating moments of $a$ is equivalent to $\Pc_{11}\subseteq\ker\Ec$.
\end{proof}

Next is a straighforward computation that we will use twice, so we state it as a separate lemma.
\begin{lemma}\label{lem:uAfr}
Let $(A,\Ec)$ be a $B$-valued $*$-noncommutative probability space.
Assume $u,p\in A$ are such that
\begin{enumerate}[label=(\roman*),leftmargin=30pt]
\item\label{it:unormalizes} $u$ a unitary that normalizes $B$
\item $\{u,u^*\}$ and $\{p,p^*\}$ are free with respect to $\Ec$.
\end{enumerate}
Let $a=up$ and let $\Pc_{ij}$ be as in~\eqref{eq:P11}--\eqref{eq:P21}.
Let $\Afr=\alg(B\cup\{p,p^*\})$ and $\Afr\oup=\Afr\cap\ker \Ec$.
Then
\begin{align}
\Pc_{12}&\subseteq u\Afr\oup u^*, \label{eq:P12A} \\
\Pc_{21}&\subseteq\Afr\oup, \label{eq:P21A} \\
\Pc_{11}&\subseteq u\Afr, \label{eq:P11A} \\
\Pc_{22}&\subseteq \Afr u^*. \label{eq:P22A}
\end{align}
\end{lemma}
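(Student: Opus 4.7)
The plan is to substitute $a = up$ and $a^* = p^*u^*$ into a generic element of each $\Pc_{ij}$, and then to use the normalizer hypothesis to absorb all interior occurrences of $u$ and $u^*$ into $B$. After the substitution, an element of $\Pc_{ij}$ becomes an alternating product of $u$'s, $p$'s, $p^*$'s, and $u^*$'s, interspersed with elements of $B$. Because $u^*Bu = B = uBu^*$, every interior subword of the form $u^*bu$ (respectively $ubu^*$) can be collapsed to a single element of $B$. I would track carefully which $u$ or $u^*$ survives this collapsing: a leading $u$ survives precisely when the original word begins with $a$, and a trailing $u^*$ precisely when it ends with $a^*$. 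This immediately gives $\Pc_{11}\subseteq u\Afr$ and $\Pc_{22}\subseteq\Afr u^*$, since no centering is needed in these cases.

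For $\Pc_{21}$ the reduction produces a residual word $y \in \Afr$ directly, so the centered element $y - \Ec(y)$ lies in $\Afr\oup$ by definition. For $\Pc_{12}$ the reduction instead produces $y = uwu^*$ with $w \in \Afr$, so the centered element is $uwu^* - \Ec(uwu^*)$, and to recast this as an element of $u\Afr\oup u^*$ I need the auxiliary identity
\[
\Ec(uwu^*) = u\,\Ec(w)\,u^* \qquad (w \in \Afr),
\]
since granting it, $uwu^* - \Ec(uwu^*) = u(w - \Ec(w))u^*$ with $w - \Ec(w) \in \Afr\oup$.

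To establish this identity I would use freeness. Writing $w_\circ := w - \Ec(w) \in \Afr\oup$, $u_\circ := u - \Ec(u)$, and $u_\circ^* := u^* - \Ec(u^*)$, the product $uw_\circ u^*$ expands into four summands:
\[
uw_\circ u^* = u_\circ w_\circ u_\circ^* + u_\circ w_\circ \Ec(u^*) + \Ec(u)\, w_\circ u_\circ^* + \Ec(u)\, w_\circ\, \Ec(u^*).
\]
On the fully centered summand, $\Ec$ vanishes by freeness of $\alg(B,u,u^*)$ and $\alg(B,p,p^*)$ over $B$ (an alternating centered product of length three); on the two mixed summands $\Ec$ vanishes by the same rule applied to the two alternating centered factors that remain after pulling the $B$-valued factor $\Ec(u)$ or $\Ec(u^*)$ outside; on the last summand $\Ec$ vanishes because $\Ec(w_\circ) = 0$. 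Thus $\Ec(uw_\circ u^*) = 0$, and since $u\Ec(w)u^* \in B$ is fixed by $\Ec$, the identity follows.

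The main obstacle is precisely this auxiliary identity $\Ec(uwu^*) = u\Ec(w)u^*$: it is the one step where freeness is genuinely used, as the surrounding $u$ and $u^*$ need not be centered. Everything else in the argument is routine bookkeeping with the normalizer property of $u$.
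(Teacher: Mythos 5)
Your proposal is correct and follows essentially the same route as the paper's proof: substitute $a=up$, absorb the interior factors $u^*bu$ into $B$ via the normalizer hypothesis, observe that $\Pc_{21}$, $\Pc_{11}$, $\Pc_{22}$ then land directly in $\Afr\oup$, $u\Afr$, $\Afr u^*$, and reduce $\Pc_{12}$ to the identity $\Ec(uwu^*)=u\Ec(w)u^*$ for $w\in\Afr$. The paper simply asserts this identity ``using freeness and condition (i)''; your four-term centering expansion is a correct and slightly more detailed justification of that same step.
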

\begin{proof}
For $b\in B$, we use the notational convention $b'=u^*bu\in B$.
We examine $\Pc_{12}$.
Consider, for $b_0,\ldots,b_{2k}\in B$,
\[
y:=b_0ab_1a^*b_2ab_3a^*\cdots b_{2k-2}ab_{2k-1}a^*b_{2k}
=b_0upb_1p^*b_2'pb_3p^*\cdots b_{2k-2}'pb_{2k-1}p^*b_{2k}'u^*.
\]
Letting $\yt=pb_1p^*b_2'pb_3p^*\cdots b_{2k-2}'pb_{2k-1}p^*b_{2k}'$, using freeness and condition~\ref{it:unormalizes},
we have
\[
\Ec(y)=b_0\Ec(u\yt u^*)=b_0u\Ec(\yt)u^*.
\]
so an arbitrary element of $\Pc_{12}$ can be written in the form
\[
x=y-\Ec(y)=b_0u(\yt-\Ec(\yt))u^*=u(b_0'(\yt-\Ec(\yt)))u^*.
\]
We have shown~\eqref{eq:P12A}.

Proving~\eqref{eq:P21A} is even easier.
Indeed, we have
\[
z:=b_0a^*b_1ab_2a^*b_3a\cdots b_{2k-2}a^*b_{2k-1}ab_{2k}
=b_0p^*b_1'pb_2p^*b_3'p\cdots b_{2k-2}p^*b_{2k-1}'pb_{2k}\in\Afr,
\]
so an arbitrary element of $\Pc_{21}$ can be written
$x=z-\Ec(z)\in\Afr\oup$.
Similarly, an arbitrary element of $\Pc_{11}$ is of the form
\begin{equation}\label{eq:P11elt}
b_0ab_1a^*b_2ab_3\cdots a^*b_{2k}ab_{2k+1}=ub_0'pb_1p^*b_2'pb_3\cdots p^*b_{2k}'pb_{2k+1}\in u\Afr,
\end{equation}
proving~\eqref{eq:P11A}.
Taking conjugates proves~\eqref{eq:P22A}.
\end{proof}

The next lemma is an analogue of Proposition~2.3 of \cite{NSS01}.
\begin{lemma}\label{lem:2.3}
Let $(A,\Ec)$ be a $B$-valued $*$-noncommutative probability space.
Assume $u,p\in A$ satisfy
\begin{enumerate}[label=(\roman*),labelwidth=3ex,leftmargin=30pt]
\item $u$ is a unitary that normalizes $B$,
\item $\{u,u^*\}$ is free from $\{p,p^*\}$ with respect to $\Ec$,
\item $\Ec(u)=0$,
\item\label{it:pword0} $\Ec(p)=0$ and, for all $k\ge1$ and all $b_1,\ldots,b_{2k}\in B$, we have
\[
\Ec(pb_1p^*b_2pb_3p^*b_4\cdots pb_{2k-1}p^*b_{2k}p)=0.
\]
\end{enumerate}
Then the element $a=up$ satisfies the condition~\ref{it:Pmoms} of Theorem~\ref{thm:rdiagEq}.
\end{lemma}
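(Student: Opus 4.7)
The plan is to apply Lemma~\ref{lem:uAfr} to rewrite each element of the sets $\Pc_{ij}$ in a form that separates the factors of $u,u^*$ from a factor in $\Afr=\alg(B\cup\{p,p^*\})$, and then to invoke the freeness of $\{u,u^*\}$ from $\{p,p^*\}$ together with $\Ec(u)=0$.

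First I would sharpen two of the inclusions from Lemma~\ref{lem:uAfr} by using hypothesis~\ref{it:pword0}. Inspecting~\eqref{eq:P11elt}, a typical element of $\Pc_{11}$ has the form $u(b_0'wb_{2k+1})$, where $w$ is an alternating word in $p,p^*$ interspersed with elements of $B$, starting and ending with $p$; hypothesis~\ref{it:pword0} forces $\Ec(w)=0$, hence $\Ec(b_0'wb_{2k+1})=0$, and so $\Pc_{11}\subseteq u\Afr\oup$. Taking adjoints (and using that $\Ec$ is $*$-preserving, so alternating $p^*$-moments also vanish) yields $\Pc_{22}\subseteq\Afr\oup u^*$. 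Combined with the already-established $\Pc_{12}\subseteq u\Afr\oup u^*$ and $\Pc_{21}\subseteq\Afr\oup$, every $x_j\in\Pc_{i_{j-1},i_j}$ can be written as $x_j=u^{L_j}y_j u^{R_j}$ with $y_j\in\Afr\oup$, where $L_j\in\{0,1\}$ equals $1$ iff $i_{j-1}=1$ and $R_j\in\{-1,0\}$ equals $-1$ iff $i_j=2$.

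Next I would multiply and collect: $x_1x_2\cdots x_n = u^{L_1}\,y_1\,u^{R_1+L_2}\,y_2\,u^{R_2+L_3}\cdots u^{R_{n-1}+L_n}\,y_n\,u^{R_n}$. The key numerical observation is that for every $1\le j\le n-1$, exactly one of $R_j,L_{j+1}$ is nonzero (the value of $i_j$ selects which), so each interior exponent $R_j+L_{j+1}$ lies in $\{-1,1\}$ and the interior letters are all in $\{u,u^*\}$. Since $\Ec(u)=0$ by hypothesis and $\Ec(u^*)=\Ec(u)^*=0$, these letters all lie in $\ker\Ec$, as do the $y_j\in\Afr\oup$.

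Consequently $x_1\cdots x_n$ is an alternating word whose letters come alternately from $\alg(B\cup\{u,u^*\})\cap\ker\Ec$ and $\Afr\cap\ker\Ec$, possibly flanked by a single extra $u$ on the far left (when $L_1=1$) and/or a single extra $u^*$ on the far right (when $R_n=-1$), both also centered. Freeness of $\{u,u^*\}$ from $\{p,p^*\}$ with respect to $\Ec$ then forces $\Ec(x_1\cdots x_n)=0$, which is condition~\ref{it:Pmoms} of Theorem~\ref{thm:rdiagEq}. The main obstacle will be the sharpening $\Pc_{11}\subseteq u\Afr\oup$ (and its adjoint), where hypothesis~\ref{it:pword0} enters essentially; once that is in hand the rest is bookkeeping with the exponents of $u$.
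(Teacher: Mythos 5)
Your proposal is correct and follows essentially the same route as the paper: it sharpens the inclusions of Lemma~\ref{lem:uAfr} to $\Pc_{11}\subseteq u\Afr\oup$ and $\Pc_{22}\subseteq\Afr\oup u^*$ using hypothesis~\ref{it:pword0}, rewrites the product as a word alternating between $\{u,u^*\}$ and $\Afr\oup$, and concludes by freeness together with $\Ec(u)=0$. Your explicit bookkeeping with the exponents $L_j,R_j$ is just a more detailed version of the paper's observation that the interior powers of $u$ collapse to single letters $u^{\pm1}$.
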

\begin{proof}
From Lemma~\ref{lem:uAfr} we have~\eqref{eq:P12A} and~\eqref{eq:P21A}.
Examining~\eqref{eq:P11elt} in the proof of Lemma~\ref{lem:uAfr} and invoking condition~\ref{it:pword0},
(then also taking conjugates), we find
\begin{align}
\Pc_{11}&\subseteq u\Afr\oup, \label{eq:P11Ao} \\
\Pc_{22}&\subseteq \Afr\oup u^*. \label{eq:P22Ao}
\end{align}
Using \eqref{eq:P12A}--\eqref{eq:P21A} and \eqref{eq:P11Ao}--\eqref{eq:P22Ao}, we see that
any product of the form $x_1x_2\cdots x_n$ with $x_j\in\Pc_{i_j,i_{j+1}}$ for some $i_1,\ldots,i_{n+1}\in\{1,2\}$
can be rewritten as a word with letters belonging to the sets $\Afr\oup$ and $\{u,u^*\}$, in alternating fashion.
By freeness and the hypothesis $\Ec(u)=0$, each such word evaluates to $0$ under $\Ec$.
\end{proof}

Next is an analogue of Lemma~2.5 of \cite{NSS01}.
\begin{lemma}\label{lem:2.5}
Suppose $u\in A$ is a $B$-normalizing Haar unitary element in 
a $B$-valued $*$-noncommutative probabiltiy space $(A,\Ec)$.
Suppose $D\subseteq A$ is a $*$-subalgebra that is free from $\{u,u^*\}$ with respect to $\Ec$.
Let $n\ge1$, $x_1,\ldots,x_n\in D$ and $h_0,h_1,\ldots,h_n\in\Ints$ be such that
\begin{enumerate}[label=(\roman*),labelwidth=3ex,leftmargin=30pt]
\item\label{it:h1} $h_k\ne0$ if $1\le k\le n-1$
\item\label{it:h2} $h_{k-1}h_k\ge0$ and at least one of $h_{k-1}$ and $h_k$ is nonzero whenever $1\le k\le n$ and $\Ec(x_k)\ne0$.
\end{enumerate}
Then
\begin{equation}\label{eq:Eux}
\Ec(u^{h_0}x_1u^{h_1}x_2\cdots u^{h_{n-1}}x_nu^{h_n})=0.
\end{equation}
\end{lemma}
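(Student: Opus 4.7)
The plan is a double induction on the length $n$ and on the defect count $N:=\#\{k:\Ec(x_k)\ne0\}$, using the $B$-normalizing property of $u$ to absorb $B$-valued "centers" into adjacent factors.

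\textbf{Base case $N=0$.} Each $x_k$ lies in $D\cap\ker\Ec$. Every interior $u^{h_k}$ (for $1\le k\le n-1$) is a nonzero power of the Haar unitary $u$, hence lies in $\ker\Ec$; any nontrivial boundary powers $u^{h_0}$ and $u^{h_n}$ do too. Thus the word $u^{h_0}x_1u^{h_1}\cdots x_n u^{h_n}$ is an alternating product of centered elements drawn in turn from $D$ and from $\mathrm{alg}(B,u,u^*)$, and freeness over $B$ forces $\Ec=0$.

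\textbf{Inductive step.} Let $k$ be the smallest index with $b_k:=\Ec(x_k)\ne0$, and split $x_k=(x_k-b_k)+b_k$. The $(x_k-b_k)$-summand reduces $N$ by one without changing $n$, and conditions \ref{it:h1}--\ref{it:h2} remain in force (the requirement at $k$ becomes vacuous), so this piece vanishes by the inner inductive hypothesis. For the $b_k$-summand, the normalizing property gives
\[
u^{h_{k-1}}b_k u^{h_k}=b_k'\,u^{h_{k-1}+h_k},\qquad b_k':=u^{h_{k-1}}b_k u^{-h_{k-1}}\in B,
\]
and hypothesis \ref{it:h2} at $k$ yields $h_{k-1}+h_k\ne0$. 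When $k\ge2$, minimality of $k$ gives $\Ec(x_{k-1})=0$, so $\tilde x_{k-1}:=x_{k-1}b_k'\in D$ still satisfies $\Ec(\tilde x_{k-1})=\Ec(x_{k-1})b_k'=0$, and the word shortens to length $n-1$. When $k=1$, the factor $b_1'$ appears on the outside of $\Ec$ and can be extracted by $B$-bimodularity, again shortening the word to length $n-1$. In either case the outer inductive hypothesis finishes the job.

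\textbf{The main obstacle} is verifying that \ref{it:h1}--\ref{it:h2} survive the shortening. Condition \ref{it:h1} is immediate: the only new interior exponent, $h_{k-1}+h_k$, is nonzero. Condition \ref{it:h2} needs checking only at the position formerly occupied by $x_{k+1}$, whose left-adjacent exponent has become $h_{k-1}+h_k$. If $\Ec(x_{k+1})\ne0$, the original \ref{it:h2} applied at both $k$ and $k+1$ forces $h_{k-1},h_k,h_{k+1}$ to share a common sign, so $(h_{k-1}+h_k)h_{k+1}\ge0$, and $h_{k-1}+h_k\ne0$ supplies the nonvanishing clause. A small bookkeeping point is that to keep $\tilde x_{k-1}\in D$ one may enlarge $D$ to $\mathrm{alg}(D\cup B)$, which leaves the freeness hypothesis intact.
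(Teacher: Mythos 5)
Your proof is correct and follows essentially the same route as the paper: induct on the number of indices with $\Ec(x_k)\ne0$, split $x_k$ at the least such index into its centered part plus $\Ec(x_k)$, use the normalizing property to merge the adjacent powers of $u$ into $u^{h_{k-1}+h_k}\ne0$, and check that hypotheses (i)--(ii) persist for the shortened word. Your sign-check at the position of $x_{k+1}$ and the treatment of the cases $k=1$ versus $k\ge2$ match the paper's argument, so nothing further is needed.
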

\begin{proof}
The proof is very similar to the proof found in \cite{NSS01}, only slightly different to take $B$ into account.
We use induction on the cardinality $m$, of $\{k\mid\Ec(x_k)\ne0\}$.
If $m=0$, then by freeness,~\eqref{eq:Eux} holds.
If $m>0$, then letting $k$ be least such that $\Ec(x_k)\ne0$, we write
\begin{multline*}
\Ec(u^{h_0}x_1u^{h_1}x_2\cdots u^{h_{n-1}}x_nu^{h_n}) \\
=\Ec(u^{h_0}x_1u^{h_1}x_2\cdots x_{k-1}u^{h_{k-1}}\Ec(x_k)u^{h_k}x_{k+1}u^{h_{k+1}}\cdots x_nu^{h_n}) \\
+\Ec(u^{h_0}x_1u^{h_1}x_2\cdots x_{k-1}u^{h_{k-1}}(x_k-\Ec(x_k))u^{h_k}x_{k+1}u^{h_{k+1}}\cdots x_nu^{h_n}).
\end{multline*}
By the induction hypothesis, the second term on the right-hand-side equals $0$.
Letting
\[
b=u^{h_{k-1}}\Ec(x_k)u^{-h_{k-1}},
\]
we have $b\in B$ and
the first term equals
\begin{equation}\label{eq:EuxIH}
\Ec(u^{h_0}x_1u^{h_1}\cdots x_{k-2}u^{h_{k-2}}x_{k-1}bu^{h_{k-1}+h_k}x_{k+1}u^{h_{k+1}}\cdots x_nu^{h_n}).
\end{equation}
We will show that, by induction hypothesis, the above quantity equals $0$.
Indeed, we have $h_{k-1}h_k\ge0$ and at most one of $h_{k-1}$ and $h_k$ can be zero, so $h_{k-1}+h_k\ne0$.
If $k<n$ and
$\Ec(x_{k+1})\ne0$, then $h_kh_{k+1}\ge0$
and we conclude $(h_{k-1}+h_k)h_{k+1}\ge0$.
If $k>1$ and $\Ec(x_{k-1}b)\ne0$, then $\Ec(x_{k-1})b\ne0$ so $\Ec(x_{k-1})\ne0$.
Consequently, $h_{k-2}h_{k-1}\ge0$.
Thus, $h_{k-2}(h_{k-1}+h_k)\ge0$.
If $k>1$ then we see that the word to which $\Ec$ is applied in~\eqref{eq:EuxIH} satisfies the requirements
\ref{it:h1}-\ref{it:h2} and, applying the induction hypothesis, we conclude that this moment is $0$.
If $k=1$, then the moment~\eqref{eq:EuxIH} becomes
\[
b\Ec(u^{h_{0}+h_1}x_{2}u^{h_{2}}\cdots x_nu^{h_n})
\]
and again, by the induction hypothesis, this is zero.
\end{proof}

The next result is an analogue of Proposition~2.4 of \cite{NSS01}.
\begin{lemma}\label{lem:2.4}
Suppose that $(A,\Ec)$ is a $B$-valued $*$-noncommutative probability space and $u,p\in A$ are such that
\begin{enumerate}[label=(\roman*),labelwidth=3ex,leftmargin=30pt]
\item $u$ is a $B$-normalizing Haar unitary element,
\item $\{u,u^*\}$ is free from $\{p,p^*\}$ with respect to $\Ec$.
\end{enumerate}
Let $a=up$.
Then $a$ satisfies condition~\ref{it:Pmoms} of Theorem~\ref{thm:rdiagEq}.
\end{lemma}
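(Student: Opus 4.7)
The plan is to combine Lemma~\ref{lem:uAfr} with Lemma~\ref{lem:2.5}. Lemma~\ref{lem:uAfr} allows each factor $x_j\in\Pc_{i_{j-1},i_j}$ in a product $x_1\cdots x_n$ to be stripped of its outermost $u$ and $u^*$, leaving a ``core'' in $\Afr:=\alg(B\cup\{p,p^*\})$ (or in $\Afr\oup:=\Afr\cap\ker\Ec$). After concatenation the full product becomes an alternating word in powers of $u$ and elements of $\Afr$, and Lemma~\ref{lem:2.5} supplies a sufficient condition for the $\Ec$ of such a word to vanish.

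More precisely, I would write $x_j=u^{\eps_j^L}\xt_j u^{\eps_j^R}$ with the pair $(\eps_j^L,\eps_j^R)\in\{-1,0,1\}^2$ equal to $(1,-1)$, $(0,0)$, $(1,0)$, or $(0,-1)$ according as $x_j$ lies in $\Pc_{12}$, $\Pc_{21}$, $\Pc_{11}$, or $\Pc_{22}$; in the first two cases $\xt_j\in\Afr\oup$, in the last two only $\xt_j\in\Afr$. Multiplying,
\[
x_1\cdots x_n = u^{h_0}\xt_1 u^{h_1}\xt_2 u^{h_2}\cdots \xt_n u^{h_n},
\]
where $h_0=\eps_1^L$, $h_n=\eps_n^R$, and $h_k=\eps_k^R+\eps_{k+1}^L$ for $1\le k\le n-1$.

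Next I would apply Lemma~\ref{lem:2.5} with $D=\Afr$, which is a $*$-subalgebra of $A$ free from $\{u,u^*\}$ with respect to $\Ec$ (freeness of $\{u,u^*\}$ from $\{p,p^*\}$ over $B$ automatically promotes to freeness from $\Afr$). For hypothesis~\ref{it:h1}, the shared index $i_k$ is simultaneously the ``ending index'' of $x_k$ (fixing $\eps_k^R$) and the ``starting index'' of $x_{k+1}$ (fixing $\eps_{k+1}^L$), so $(\eps_k^R,\eps_{k+1}^L)\in\{(0,1),(-1,0)\}$ and hence $h_k\in\{-1,1\}$. For hypothesis~\ref{it:h2}, $\Ec(\xt_k)\ne0$ is possible only when $x_k\in\Pc_{11}\cup\Pc_{22}$; if $x_k\in\Pc_{11}$ then $i_{k-1}=i_k=1$ forces $h_{k-1},h_k\in\{0,1\}$ with at least one positive (including the boundary cases $k=1$ or $k=n$), and the case $x_k\in\Pc_{22}$ is symmetric with values in $\{0,-1\}$. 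Lemma~\ref{lem:2.5} then gives $\Ec(x_1\cdots x_n)=0$, which is exactly condition~\ref{it:Pmoms} of Theorem~\ref{thm:rdiagEq}.

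The argument presents no genuine analytical obstacle; the substance is the sign-bookkeeping for the exponents $h_k$. The one conceptual subtlety, compared with Lemma~\ref{lem:2.3}, is that we cannot assume $\Ec(\xt_k)=0$ for the $\Pc_{11}$ and $\Pc_{22}$ factors --- this is precisely why Lemma~\ref{lem:2.5} (rather than a direct appeal to freeness) is needed, since its hypothesis~\ref{it:h2} is exactly calibrated to absorb the surviving $\Ec(\xt_k)\ne 0$ contributions.
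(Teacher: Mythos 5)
Your proposal is correct and follows essentially the same route as the paper: invoke Lemma~\ref{lem:uAfr} to strip the outer $u,u^*$ from each factor, concatenate into an alternating word in powers of $u$ and elements of $\Afr$, and verify that the exponent pattern satisfies hypotheses \ref{it:h1}--\ref{it:h2} of Lemma~\ref{lem:2.5}. Your explicit sign-bookkeeping for $(\eps_j^L,\eps_j^R)$ and $h_k$ just fills in the details the paper states without verification.
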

\begin{proof}
Lemma~\ref{lem:uAfr} applies and we may use the inclusions~\eqref{eq:P12A}--\eqref{eq:P22A},
where $\Afr$ is the algebra generated by $B\cup\{p,p^*\}$.
Thus, for arbitrary $n\in\Nats$, $i_1,\ldots,i_{n+1}\in\{1,2\}$ and $x_j\in\Pc_{i_ji_{j+1}}$, the product
$x_1x_2\cdots x_n$ can be re-written in the form $c_0u^{\eps_1}c_1u^{\eps_2}\cdots c_{r-1}u^{\eps_r}c_r$ for some $r\ge0$, some
$c_0,\ldots,c_r\in\Afr$ and some $\eps_1,\ldots,\eps_r\in\{-1,1\}$, where whenever $\Ec(c_j)\ne0$ for some $1\le j\le r-1$,
we have $\eps_j=\eps_{j+1}$ and in the case $r=0$, we have $\Ec(c_0)=0$.
Now Lemma~\ref{lem:2.5} applies and we conclude $\Ec(x_1x_2\cdots x_n)=0$.
\end{proof}

\begin{proof}[Proof of \ref{it:Pmoms} $\Longleftrightarrow$ \ref{it:u0p}
$\Longleftrightarrow$ \ref{it:u0} $\Longleftrightarrow$ \ref{it:u} of Theorem~\ref{thm:rdiagEq}.]
We first show \ref{it:u}$\implies$\ref{it:u0}.
There is a $B$-valued $*$-noncommutative probability space $(\Bt,\Fc)$ with a unitary $v\in\Bt$ such that
$v$ commutes with every element of $B$ and $\Fc(v^n)=0$ for all $n\in\Nats$.
For example, we could take $\Bt$ to be the algebra $B\otimes\Cpx[C_\infty]$, where $\Cpx[C_\infty]$ is the group $*$-algebra
of the cyclic group of infinite order and where $\Fc(b\otimes x)=b\,x_e$,
where, for $x\in\Cpx[\Ints]$, $x_e$ equals the coefficient in $x$ of the identity element $e\in C_\infty$;
let $c$ be a generator of $C_\infty$
and denote also by $c\in\Cpx[C_\infty]$ the corresponding element of the group algebra;
then $v=1\otimes c$ is a unitary with the desired properties.
Then we let
\begin{equation}\label{eq:amalgfp}
(\At,\Ect)=(A,\Ec)*_B(\Bt,\Fc)
\end{equation}
be the algebraic (amalgamated) free product of $B$-valued $*$-noncommutative probability spaces
and let $u\in\At$ be the copy of $v\in\Bt$ arising from the free product construction.
Then $u$ commutes with every element of $B$, $\{u,u^*\}$ and $\{a,a^*\}$ are free and $\Ect(u)=0$.
By~\ref{it:u}, the elements $a$ and $ua$ have the same $B$-valued $*$-distribution.
Therefore, the requirements of~\ref{it:u0} are fulfilled.

We now show \ref{it:u0}$\implies$\ref{it:u0p}, taking $p=a$.
We need only show that condition~\ref{it:pword0thm} of~\ref{it:u0p} holds, namely, that $\Ec(a)=0$ and
\[
\Ec(ab_1a^*b_2ab_3a^*b_4\cdots ab_{2k-1}a^*b_{2k}a)=0.
\]
Since, by hypothesis, $a$ has the same $B$-valued $*$-distribution as $ua$, we have
\[
\Ec(a)=\Ect(ua)=\Ect(u)\Ec(a)=0,
\]
where the second equality is due to freeness namely, condition~\ref{it:u0free} of~\ref{it:u0}, and the last equality
is because $\Ect(u)=0$, namely, condition~\ref{it:u0u0} of~\ref{it:u0}.
Similarly,
since $u$ commutes with every element of $B$,
we have
\begin{multline*}
\Ec(ab_1a^*b_2ab_3a^*b_4\cdots ab_{2k-1}a^*b_{2k}a)
=\Ect(uab_1a^*b_2ab_3a^*b_4\cdots ab_{2k-1}a^*b_{2k}a) \\
=\Ect(u)\Ec(ab_1a^*b_2ab_3a^*b_4\cdots ab_{2k-1}a^*b_{2k}a)=0,
\end{multline*}
with the penultimate equality due to freeness.
Thus, \ref{it:u0p} holds.

The implication \ref{it:u0p}$\implies$\ref{it:Pmoms} follows immediately by an appeal to Lemma~\ref{lem:2.3}.

We now show \ref{it:Pmoms}$\implies$\ref{it:u}.
We suppose $(\At,\Ect)$ is an enlargement of $(A,\Ec)$ and $u\in A$ is a unitary
satisfying~\ref{it:ucommB} and~\ref{it:ufreefroma} of~\ref{it:u}.
We must show that $a$ and $ua$ have the same $B$-valued $*$-distribution.
Taking $(\Bt,\Fc)$ as in the proof of \ref{it:u}$\implies$\ref{it:u0} above and replacing $(\At,\Ect)$ by
the algebraic free product with amalgamation
$(\At,\Ect)*_B(\Bt,\Fc)$,
we may assume there exists a Haar unitary $v\in\At$ such that $v$ commutes with every element of $B$ and
$\{v,v^*\}$ is free from $\{a,u,a^*,u^*\}$.
Thus, the triple
\[
\{v,v^*\},\;\{u,u^*\},\;\{a,a^*\}
\]
forms a free family of sets.

We make the following claims, which we will prove one after the other:
\begin{enumerate}[label=(\Alph*),leftmargin=30pt]
\item\label{cl:a,va} $a$ and $va$ have the same $B$-valued $*$-distribution,
\item\label{cl:ua,uva} $ua$ and $uva$ have the same $B$-valued $*$-distribution,
\item\label{cl:uaCond} each of $uva$ and $ua$ satisfies condition~\ref{it:Pmoms} of Theorem~\ref{thm:rdiagEq},
\item\label{cl:a,ua} $a$ and $ua$ have the same $B$-valued $*$-distribution.
\end{enumerate}

To prove~\ref{cl:a,va},
we note that both $a$ and $va$ satisfy condition~\ref{it:Pmoms} of Theorem~\ref{thm:rdiagEq};
the operator $a$ does so by hypothesis, and the operator $va$ does so by Lemma~\ref{lem:2.4}.
Moreover, for each $k\ge1$ and $b_1,\ldots,b_{2k-1}\in B$, we have
\begin{multline}\label{eq:altmoms1}
\Ect\big((va)b_1(va)^*b_2(va)b_3(va)^*b_4\cdots(va)b_{2k-1}(va)^*\big) \\
=\Ect(vab_1a^*b_2ab_3a^*b_4\cdots ab_{2k-1}a^*v^*)
=\Ect(ab_1a^*b_2ab_3a^*b_4\cdots ab_{2k-1}a^*),
\end{multline}
where the last inequality is due to freeness, and
\begin{equation}\label{eq:altmoms2}
\Ect\big((va)^*b_1(va)b_2(va)^*b_3(va)b_4\cdots (va)^*b_{2k-1}(va)\big)
=\Ect(a^*b_1ab_2a^*b_3ab_4\cdots a^*b_{2k-1}a).
\end{equation}
Thus, by Remark~\ref{rem:l2.1}, $a$ and $va$ have the same $B$-valued $*$-distribution, and the claim is proved.

To prove~\ref{cl:ua,uva},
note that $\{u,u^*\}$ is free from $\{a,a^*\}$ and from $\{va,(va)^*\}$ and, by~\ref{cl:a,va}, $a$ and $va$ are
identically $*$-distributed.
This implies~\ref{cl:ua,uva}.

To prove~\ref{cl:uaCond}, we argue
from Lemma~\ref{lem:2.4}
that  $uva$ satisfies condition~\ref{it:Pmoms} of Theorem~\ref{thm:rdiagEq},
because $uv$ is a $B$-normalizing Haar unitary.
That $ua$ satisfies condition~\ref{it:Pmoms} of Theorem~\ref{thm:rdiagEq}
now follows from~\ref{cl:ua,uva}.

To prove~\ref{cl:a,ua},
note that the analogues of~\eqref{eq:altmoms1} and~\eqref{eq:altmoms2} hold when $v$ is replaced by $u$,
by the same arguments as given above.
Also, both $a$ and $ua$ satisfy condition~\ref{it:Pmoms} of Theorem~\ref{thm:rdiagEq};
the operator $a$ does so by hypothesis, and the operator $ua$ does so by~\ref{cl:uaCond}.
By Remark~\ref{rem:l2.1}, $a$ and $ua$ are identically distributed.
This finishes the proof of \ref{it:Pmoms}$\implies$\ref{it:u}.

This completes the proof of the equivalence of
\ref{it:Pmoms}, \ref{it:u0p}, \ref{it:u0} and \ref{it:u} of Theorem~\ref{thm:rdiagEq}.
\end{proof}

For the next lemma, we use the sets $\Pc_{11}$ and $\Pc_{22}$ as in~\eqref{eq:P11} and~\eqref{eq:P22}, 
and we use additionally the notation
\begin{align*}
\Pct_{12}&=B\cup\bigl\{b_0ab_1a^*b_2ab_3a^*b_4\cdots ab_{2k-1}a^*b_{2k}\mid k\ge1,\,b_0,\ldots,b_{2k}\in B\bigr\} \\  \\
\Pct_{21}&=B\cup\bigl\{b_0a^*b_1ab_2a^*b_3ab_4\cdots a^*b_{2k-1}ab_{2k}\mid k\ge1,\,b_0,\ldots,b_{2k}\in B\bigr\}, 
\end{align*}
so that we have
\begin{equation}\label{eq:Pcts}
\Pc_{12}=\{w-\Ec(w)\mid w\in\Pct_{12}\},\qquad\Pc_{21}=\{w-\Ec(w)\mid w\in\Pct_{21}\}.
\end{equation}
Now we turn to condition~\ref{it:M2} of Theorem~\ref{thm:rdiagEq} and take $z$ and $B^{(2)}$ as defined there.
\begin{lemma}\label{lem:M2alg}
Let $R\subseteq M_2(A)$ be the subalgebra generated by $\{z\}\cup B^{(2)}$.
Then
\begin{equation}\label{eq:R}
R=\lspan\left\{\left(\begin{matrix} r_{11}&r_{12}\\r_{21}&r_{22}\end{matrix}\right)\,\bigg|\,
r_{11}\in\Pct_{12},\,r_{12}\in\Pc_{11},\,r_{21}\in\Pc_{22},\,r_{22}\in\Pct_{21}\right\}.
\end{equation}
\end{lemma}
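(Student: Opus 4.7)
The plan is to verify the two inclusions $R\subseteq\lspan(S)$ and $\lspan(S)\subseteq R$, where I write $S$ for the set of matrices appearing inside the span on the right-hand side of~\eqref{eq:R}. Since $B^{(2)}$ is a unital subalgebra of $R$ and $z\in R$, every element of $R$ is a linear combination of monomials of the form $M_n:=d_0zd_1z\cdots zd_n$ with $d_0,\ldots,d_n\in B^{(2)}$ and $n\ge0$.

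For $R\subseteq\lspan(S)$, I would show by induction on $n$ that each such $M_n$ itself belongs to $S$. The base case $n=0$ is immediate from $B\subseteq\Pct_{12}$, $B\subseteq\Pct_{21}$, together with the observation that $0$ lies in each of $\Pct_{12},\Pc_{11},\Pc_{22},\Pct_{21}$ (set $b_0=0$ in the respective word definitions). For the inductive step, writing $d_{n+1}=\left(\begin{smallmatrix}x&0\\0&y\end{smallmatrix}\right)$, the identities $\left(\begin{smallmatrix}u&0\\0&v\end{smallmatrix}\right)zd_{n+1}=\left(\begin{smallmatrix}0&uay\\va^*x&0\end{smallmatrix}\right)$ and $\left(\begin{smallmatrix}0&u\\v&0\end{smallmatrix}\right)zd_{n+1}=\left(\begin{smallmatrix}ua^*x&0\\0&vay\end{smallmatrix}\right)$ show that right multiplication by $zd_{n+1}$ swaps the diagonal/anti-diagonal structure and extends each nonzero entry by one factor of $a$ or $a^*$ together with a factor from $B$. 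A quick check then verifies that the $(1,1)$ and $(2,2)$ entries of $M_{2k}$ lie in $\Pct_{12}$ and $\Pct_{21}$ respectively, while the $(1,2)$ and $(2,1)$ entries of $M_{2k+1}$ lie in $\Pc_{11}$ and $\Pc_{22}$.

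For the reverse inclusion $\lspan(S)\subseteq R$, it suffices to show that each elementary matrix with a single nonzero entry in the prescribed set lies in $R$, since a general element of $S$ is the sum of four such matrices. Given, for example, a word $w=b_0ab_1a^*b_2\cdots a^*b_{2k}\in\Pct_{12}$ with $k\ge1$, I would realize the elementary matrix $E_{11}(w)$ as $M_{2k}$ with $d_0=\left(\begin{smallmatrix}b_0&0\\0&0\end{smallmatrix}\right)$, $d_{2j}=\left(\begin{smallmatrix}b_{2j}&0\\0&0\end{smallmatrix}\right)$ for $j\ge1$, and $d_{2j-1}=\left(\begin{smallmatrix}0&0\\0&b_{2j-1}\end{smallmatrix}\right)$; the zero in the $(2,2)$ slot of $d_0$ immediately propagates to kill the $(2,2)$ entry of the product, while the remaining choices assemble $w$ in the $(1,1)$ slot. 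The case $w\in B$ is handled directly by $\left(\begin{smallmatrix}b&0\\0&0\end{smallmatrix}\right)\in B^{(2)}$, and the three other positions are entirely analogous.

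The only real obstacle is bookkeeping the interplay between the $1$/$2$ indexing in $B^{(2)}$ and the alternation of $a$ and $a^*$ in $z$; once the matrix multiplication formulas above are in hand, both inclusions fall out at once.
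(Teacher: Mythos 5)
Your proposal is correct and takes essentially the same route as the paper's proof: both establish the inclusion $\subseteq$ by induction on the length of a word $d_0zd_1z\cdots zd_n$ with $d_j\in B^{(2)}$, using the same matrix-product identities to track how the diagonal/anti-diagonal structure alternates with parity, and both obtain the reverse inclusion by a judicious choice of the diagonal entries of the $d_j$ to isolate a single prescribed entry. Your version merely spells out details (the role of $0$ in each $\Pc$-set, the explicit choice of $d_j$'s) that the paper leaves to the reader.
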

\begin{proof}
For $b_1^{(j)},b_2^{(j)}\in B$, we have
\begin{gather*}
z\left(\begin{matrix}b_1^{(1)}&0\\0&b_2^{(1)}\end{matrix}\right)=\left(\begin{matrix}0&ab_2^{(1)}\\a^*b_1^{(1)}&0\end{matrix}\right) \\
z\left(\begin{matrix}b_1^{(1)}&0\\0&b_2^{(1)}\end{matrix}\right)z\left(\begin{matrix}b_1^{(2)}&0\\0&b_2^{(2)}\end{matrix}\right)
=\left(\begin{matrix}ab_2^{(1)}a^*b_1^{(2)}&0\\0&a^*b_1^{(1)}ab_2^{(2)}\end{matrix}\right).
\end{gather*}
Using induction on $n$, we easily see that every word of the form
\begin{equation}\label{eq:zword}
\left(\begin{matrix}b_1^{(0)}&0\\0&b_2^{(0)}\end{matrix}\right)z\left(\begin{matrix}b_1^{(1)}&0\\0&b_2^{(1)}\end{matrix}\right)z\left(\begin{matrix}b_1^{(2)}&0\\0&b_2^{(2)}\end{matrix}\right)
\cdots
z\left(\begin{matrix}b_1^{(n)}&0\\0&b_2^{(n)}\end{matrix}\right)
\end{equation}
belongs to the right-hand-side of~\eqref{eq:R},
from which we easily deduce that the inclusion $\subseteq$ in~\eqref{eq:R} holds.
The reverse inclusion is easily proved by judicious choice of $b_1^{(j)}$ and $b_2^{(j)}$ in~\eqref{eq:zword}.
\end{proof}

\begin{proof}[Proof of \ref{it:Pmoms} $\Longleftrightarrow$  \ref{it:M2} of Theorem~\ref{thm:rdiagEq}.]
From Lemma~\ref{lem:M2alg} and~\eqref{eq:Pcts}, we have
\begin{equation}\label{eq:RcapE}
R\cap\ker\Ec^{(2)}=\lspan\left\{\left(\begin{matrix} r_{11}&r_{12}\\r_{21}&r_{22}\end{matrix}\right)\,\bigg|\,
r_{11}\in\Pc_{12},\,r_{12}\in\Pc_{11},\,r_{21}\in\Pc_{22},\,r_{22}\in\Pc_{21}\right\}.
\end{equation}
and, clearly, we have
\[
M_2(B)\cap\ker\Ec^{(2)}=\left\{\left(\begin{matrix} 0&b_1\\b_2&0\end{matrix}\right)\,\bigg|\,b_1,b_2\in B\right\}.
\]
To show \ref{it:Pmoms} $\Longrightarrow$  \ref{it:M2}, take $r^{(1)},\ldots,r^{(n)}\in R\cap\ker\Ec^{(2)}$
and
\[
b^{(j)}=\left(\begin{matrix} 0&b_1^{(j)}\\b_2^{(j)}&0\end{matrix}\right)\in M_2(B)\cap\ker\Ec^{(2)}
\]
for $0\le j\le n$
and consider the product $r^{(1)}b^{(1)}\cdots r^{(n)}b^{(n)}$
Since each $r^{(j)}b^{(j)}$ is a $2\times2$ matrix whose $(k,l)$-th entry belonging to $P_{kl}$, for each $k,l\in\{1,2\}$,
we see that every entry of the $2\times2$ matrix
$r^{(1)}b^{(1)}\cdots r^{(n)}b^{(n)}$ is a sum of products of the form of the form that condition~\ref{it:Pmoms}
of Theorem~\ref{thm:rdiagEq} guarantees has expectation zero.
Thus, every entry of the $2\times2$ matrix
$r^{(1)}b^{(1)}\cdots r^{(n)}b^{(n)}$ evaluates to zero under $\Ec$.
The same remains true if we left multiply by $b^{(0)}$
or if we choose $b_1^{(n)}=b_2^{(n)}=1$ and then right multiply by $\left(\begin{smallmatrix} 0&1\\1&0\end{smallmatrix}\right)$,
or if we do both.
This shows that each of the products
\begin{multline}\label{eq:rbprods}
r^{(1)}b^{(1)}\cdots r^{(n)}b^{(n)},\quad r^{(1)}b^{(1)}r^{(2)}\cdots b^{(n-1)}r^{(n)}, \\
b^{(0)}r^{(1)}b^{(1)}\cdots r^{(n)}b^{(n)},\quad b^{(0)}r^{(1)}\cdots b^{(n-1)}r^{(n)},
\end{multline}
expects to zero under $\Ec^{(2)}$.
This is \ref{it:M2} of Theorem~\ref{thm:rdiagEq}.

To show \ref{it:M2} $\Longrightarrow$ \ref{it:Pmoms}, using~\eqref{eq:RcapE}, it is straightforward to arrange, given any
$i_1,\ldots,\linebreak[1] i_{n+1}\in\{1,2\}$
and any $x_j\in\Pc_{i_ji_{j+1}}$, that the product $x_1x_2\cdots x_n$ arise as either the $(1,1)$ or $(2,2)$ entry of a product of one of the forms~\eqref{eq:rbprods}
for suitable $r^{(1)},\ldots,r^{(n)}$ and for each $b^{(j)}=\left(\begin{smallmatrix} 0&1\\1&0\end{smallmatrix}\right)$.
Thus, $\Ec(x_1x_2\cdots x_n)=0$.
\end{proof}

Since the equivalence \ref{it:cum} $\Longleftrightarrow$ \ref{it:Pmoms} from Theorem~\ref{thm:rdiagEq} was proved in~\cite{SS01},
the proof of Theorem~\ref{thm:rdiagEq} is complete.

\section{Formal power series}
\label{sec:series}

In this section, we derive some formal power series relations involving moments and cumulants of R-diagonal elements.
The main result is essentially a variation on the combinatorial proof of the power series relation for the R-transform,
here modified to handle R-diagonal elements.
The result is of intrinsic interest and will be used in the appendix to investigate the particular operator considered in Example~\ref{ex:nofreepolar}.

Throughout this section, $B$ will be a unital $*$-algebra, $(A,\Ec)$ will be a $B$-valued $*$-noncommutative probability space
and $a\in A$ will be a $B$-valued R-diagonal element.
We let $a_1=a$ and $a_2=a^*$ and let $\alpha_j$ for sequences $j$ in $\{1,2\}$ denote the $B$-valued cumulant maps of the pair $(a_1,a_2)$.
For $k\in\Nats$, for the $2k-1$-multilinear cumulant maps that are the only ones that need not be zero,
we write
\[
\alpha_k^{(1)}=\alpha_{(\underset{2k}{\underbrace{\scriptstyle 1,2,1,2\ldots,1,2}})},
\qquad\alpha_k^{(2)}=\alpha_{(\underset{2k}{\underbrace{\scriptstyle 2,1,2,1\ldots,2,1}})}.
\]

\begin{prop}\label{prop:mrecurse}
For $k\in\Nats$ and $b_1,\ldots,b_{2k}\in B$, let us write
\begin{align*}
m_k^{(1)}(b_1,\ldots,b_{2k})&=\Ec(ab_1a^*b_2ab_3a^*b_4\cdots ab_{2k-1}a^*b_{2k}) \\
m_k^{(2)}(b_1,\ldots,b_{2k})&=\Ec(a^*b_1ab_2a^*b_3ab_4\cdots a^*b_{2k-1}ab_{2k}),
\end{align*}
and
\[
m_0^{(1)}=m_0^{(2)}=1\in B.
\]
Then, for every $n\ge1$ and $b_1,\ldots,b_{2n}\in B$, we have
\begin{align}
m_n^{(1)}(b_1,\ldots,b_{2n})&=\sum_{\ell=1}^n\sum_{\substack{k(1),\ldots,k(2\ell)\ge0 \\ k(1)+\cdots+k(2\ell)=n-\ell}} \label{eq:m1sum} \\[1ex]
\alpha_\ell^{(1)}\bigg(&
b_{s(1)+1}m_{k(1)}^{(2)}(b_{s(1)+2},\ldots,b_{s(1)+2k(1)+1}), \notag\\
&b_{s(2)+2}m_{k(2)}^{(1)}(b_{s(2)+3},\ldots,b_{s(2)+2k(2)+2}), \displaybreak[2]\notag\\
&b_{s(3)+3}m_{k(3)}^{(2)}(b_{s(3)+4},\ldots,b_{s(3)+2k(3)+3}), \notag\\
&\hspace{7em}\vdots \displaybreak[2]\notag\\
&b_{s(2\ell-2)+2\ell-2}m_{k(2\ell-2)}^{(1)}(b_{s(2\ell-2)+2\ell-1},\ldots,b_{s(2\ell-2)+2k(2\ell-2)+2\ell-2}), \notag\\
&b_{s(2\ell-1)+2\ell-1}m_{k(2\ell-1)}^{(2)}(b_{s(2\ell-1)+2\ell},\ldots,b_{s(2\ell-1)+2k(2\ell-1)+2\ell-1}) \notag\\
\bigg)&b_{s(2\ell)+2\ell}m_{k(2\ell)}^{(1)}(b_{s(2\ell)+2\ell+1},\ldots,b_{s(2\ell)+2k(2\ell)+2\ell}) \notag
\end{align}
and
\begin{align*}
m_n^{(2)}(b_1,\ldots,b_{2n})&=\sum_{\ell=1}^n\sum_{\substack{k(1),\ldots,k(2\ell)\ge0 \\ k(1)+\cdots+k(2\ell)=n-\ell}} \\[1ex]
\alpha_\ell^{(2)}\bigg(&
b_{s(1)+1}m_{k(1)}^{(1)}(b_{s(1)+2},\ldots,b_{s(1)+2k(1)+1}), \\
&b_{s(2)+2}m_{k(2)}^{(2)}(b_{s(2)+3},\ldots,b_{s(2)+2k(2)+2}), \displaybreak[2]\\
&b_{s(3)+3}m_{k(3)}^{(1)}(b_{s(3)+4},\ldots,b_{s(3)+2k(3)+3}), \\
&\hspace{7em}\vdots \displaybreak[2]\\
&b_{s(2\ell-2)+2\ell-2}m_{k(2\ell-2)}^{(2)}(b_{s(2\ell-2)+2\ell-1},\ldots,b_{s(2\ell-2)+2k(2\ell-2)+2\ell-2}), \\
&b_{s(2\ell-1)+2\ell-1}m_{k(2\ell-1)}^{(1)}(b_{s(2\ell-1)+2\ell},\ldots,b_{s(2\ell-1)+2k(2\ell-1)+2\ell-1}) \\
\bigg)&b_{s(2\ell)+2\ell}m_{k(2\ell)}^{(2)}(b_{s(2\ell)+2\ell+1},\ldots,b_{s(2\ell)+2k(2\ell)+2\ell}),
\end{align*}
where $s(j)=2(k(1)+\cdots+k(j-1))$, which includes the case $s(1)=0$.
\end{prop}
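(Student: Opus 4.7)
The plan is to expand $m_n^{(1)}(b_1,\ldots,b_{2n}) = \Ec(ab_1a^*b_2\cdots ab_{2n-1}a^*)\cdot b_{2n}$ via the moment-cumulant formula \eqref{eq:alphacummom}, writing
\[
m_n^{(1)}(b_1,\ldots,b_{2n}) = \sum_{\pi\in\NC(2n)} \alphah_j(\pi)[b_1,\ldots,b_{2n-1}]\cdot b_{2n}
\]
with $j=(1,2,1,2,\ldots,1,2)\in\{1,2\}^{2n}$, and then to reorganize the sum according to the block of $\pi$ containing position $1$. By condition~\ref{it:cum} of Theorem~\ref{thm:rdiagEq}, only those $\pi$ contribute each of whose blocks $\{p_1<\cdots<p_{2m}\}$ has even length and consists of positions of alternating type; since the type at position $p$ is determined by the parity of $p$, this forces $p_i$ odd exactly when $i$ is odd.

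Let $V=\{q_1=1<q_2<\cdots<q_{2\ell}\}$ be the block of $\pi$ containing $1$. Since $\pi$ is noncrossing, the remaining blocks of $\pi$ form noncrossing partitions $\pi^{(i)}$ of the gap intervals $I_i=\{q_i+1,\ldots,q_{i+1}-1\}$ for $1\le i\le 2\ell-1$ and $I_{2\ell}=\{q_{2\ell}+1,\ldots,2n\}$. Setting $2k(i)=|I_i|$, we have $k(1)+\cdots+k(2\ell)=n-\ell$ and $q_j=j+2\sum_{i<j}k(i)=j+s(j)$, matching the proposition. Iterating \eqref{eq:cumrec} by peeling off an interval block of $\pi$ lying inside some $I_i$ at each step, until only $V$ remains, yields
\[
\alphah_j(\pi)[b_1,\ldots,b_{2n-1}] = \alpha_\ell^{(1)}(B_1,\ldots,B_{2\ell-1})\cdot b_{q_{2\ell}}\,\alphah_{j|_{I_{2\ell}}}(\pi^{(2\ell)})[b_{q_{2\ell}+1},\ldots,b_{2n-1}],
\]
where, for $1\le i\le 2\ell-1$,
\[
B_i = b_{q_i}\,\alphah_{j|_{I_i}}(\pi^{(i)})[b_{q_i+1},\ldots,b_{q_{i+1}-2}]\,b_{q_{i+1}-1},
\]
understood to reduce to $b_{q_i}$ when $I_i=\emptyset$.

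Next, the reorganization $\sum_\pi = \sum_V \sum_{\pi^{(1)}}\cdots\sum_{\pi^{(2\ell)}}$ is legitimate because $\pi$ is recovered uniquely from $V$ and the independent inner noncrossing partitions $\pi^{(i)}$. Applying the moment-cumulant formula \eqref{eq:alphacummom} in reverse, $\sum_{\pi^{(i)}}\alphah_{j|_{I_i}}(\pi^{(i)})[b_{q_i+1},\ldots,b_{q_{i+1}-2}]$ equals the moment $\psi_{j|_{I_i}}(b_{q_i+1},\ldots,b_{q_{i+1}-2})$, which after right-multiplication by $b_{q_{i+1}-1}$ coincides, by $B$-linearity of $\Ec$, with $m_{k(i)}^{(2)}(b_{q_i+1},\ldots,b_{q_{i+1}-1})$ when $i$ is odd (so $q_i+1$ is even and the inner substring starts with $a^*$) and with $m_{k(i)}^{(1)}(b_{q_i+1},\ldots,b_{q_{i+1}-1})$ when $i$ is even; the convention $m_0^{(1)}=m_0^{(2)}=1$ absorbs the empty-interval cases. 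Similarly, the trailing factor becomes $b_{q_{2\ell}}\,m_{k(2\ell)}^{(1)}(b_{q_{2\ell}+1},\ldots,b_{2n})$ once $b_{2n}$ is absorbed. Substituting $q_j=j+s(j)$ reproduces \eqref{eq:m1sum} term by term. The formula for $m_n^{(2)}$ follows by the same argument with $j=(2,1,2,1,\ldots,2,1)$, which swaps superscripts $(1)$ and $(2)$ throughout (the outer cumulant becoming $\alpha_\ell^{(2)}$, and the type in each gap flipping accordingly).

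The main obstacle is establishing the explicit iterated-recursion factorization of $\alphah_j(\pi)$ after fixing the outer block $V$: a straightforward but intricate induction on the number of blocks of $\pi$, where the three cases of \eqref{eq:cumrec} must be tracked carefully to verify that the boundary letters $b_{q_i}$ end up multiplying each argument of $\alpha_\ell^{(1)}$ on the left while the letters $b_{q_{i+1}-1}$ are absorbed as the trailing $b_{2k(i)}$ of the corresponding inner moment on the right.
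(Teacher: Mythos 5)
Your proposal is correct and follows essentially the same route as the paper's proof: expand $m_n^{(1)}$ by the moment--cumulant formula, restrict to partitions whose block containing $1$ alternates in parity (the paper presents this via a figure of the outer block with $2\ell$ gaps of sizes $2k(i)$), and re-sum the inner noncrossing partitions of each gap back into the moments $m_{k(i)}^{(1)}$ or $m_{k(i)}^{(2)}$ via the moment--cumulant formula in reverse. The only quibble is your parenthetical claim that every contributing block has $p_i$ odd exactly when $i$ is odd --- that holds for the block containing $1$ (which is all you use) but not for blocks beginning at an even position, whose type sequence is $(2,1,\ldots,2,1)$.
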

\begin{proof}
Using the moment-cumulant formula, we have
\[
m_n^{(1)}(b_1,\ldots,b_{2n})=\sum_{\pi\in\NC(2n)}\alphah_{(1,2,\ldots,1,2)}(\pi)[b_1,\ldots,b_{2n-1}]b_{2n}.
\]
If $\pi\in\NC(2n)$ yields a nonzero term in the above sum, then the block of $\pi$ containing $1$ must contain alternating
odd and even numbers increasing from left to right.
Such a block is indicated in in Figure~\ref{fig:ncp}.
\begin{figure}[ht]
\caption{A noncrossing partition for an alternating $*$-moment.}
\label{fig:ncp}
\begin{picture}(430,70)(0,0)
\put(10,50){\line(1,0){189.5}}
\put(220.5,50){\line(1,0){109.5}}
\put(199.5,50){\line(1,1){7}}
\put(206.5,57){\line(1,-2){7}}
\put(213.5,43){\line(1,1){7}}
\put(10,10){\line(0,1){40}}
\put(90,10){\line(0,1){40}}
\put(170,10){\line(0,1){40}}
\put(250,10){\line(0,1){40}}
\put(330,10){\line(0,1){40}}
\put(5,0){$a$}
\put(85,0){$a^*$}
\put(165,0){$a$}
\put(245,0){$a$}
\put(325,0){$a^*$}
\put(50,25){\oval(60,20)}
\put(38,22){$2k(1)$}
\put(130,25){\oval(60,20)}
\put(118,22){$2k(2)$}
\put(290,25){\oval(60,20)}
\put(265,22){$2k(2\ell-1)$}
\put(380,25){\oval(80,20)}
\put(365,22){$2k(2\ell)$}
\multiput(190,25)(20,0){3}{\circle*{5}}
\end{picture}
\end{figure}
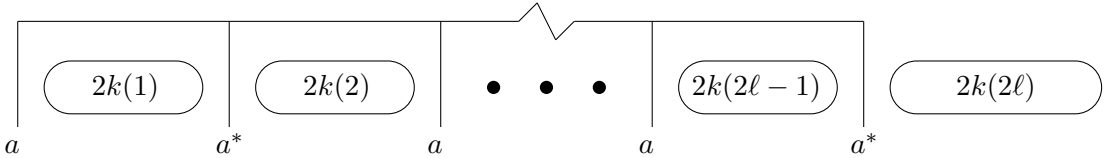
The ovals represent the locations of the other blocks of the partition, and the quantities $2k(j)$ in the ovals are the lengths of the respective gaps,
which may be zero.
If we sum the quantity
\begin{equation}\label{eq:alphah12}
\alphah_{(1,2,\ldots,1,2)}(\pi)[b_1,\ldots,b_{2n-1}]b_{2n}
\end{equation}
over all partitions $\pi\in\NC(2n)$ whose block containing $1$
is the given one shown in the figure, then by the moment-cumulant formula applied to each of the $2\ell$ ovals, we obtain
precisely the summand of the summation appearing in~\eqref{eq:m1sum}.
Now fixing $\ell$ and summing this value over all possible values of $k(1),\ldots,k(2\ell)$ equals the sum of the quantity~\eqref{eq:alphah12}
over all $\pi\in\NC(2n)$ whose first block contains $2\ell$ elements.
Finally, summing over all values of $\ell$ yields the equality~\eqref{eq:m1sum}.

The other equality is proved in the same way, by changing indices.
\end{proof}

\begin{thm}\label{thm:powerseries}
Consider the formal power series
\begin{align}
F(b_1,b_2)&=\sum_{n=0}^\infty\Ec((ab_1a^*b_2)^n) \label{eq:F} \\
G(b_1,b_2)&=\sum_{n=0}^\infty\Ec((a^*b_1ab_2)^n). \label{eq:G}
\end{align}
Then
\begin{align}
F(b_1,b_2)&
\begin{aligned}[t]
=1+\sum_{\ell=1}^\infty\alpha^{(1)}_\ell\Big(b_1G(b_2,b_1),&b_2F(b_1,b_2),b_1G(b_2,b_1), \\
&\ldots,b_2F(b_1,b_2),b_1G(b_2,b_1)\Big)b_2F(b_1,b_2),
\end{aligned} \label{eq:Frecurse} \\
G(b_1,b_2)&
\begin{aligned}[t]
=1+\sum_{\ell=1}^\infty\alpha^{(2)}_\ell\Big(b_1F(b_2,b_1),&b_2G(b_1,b_2),b_1F(b_2,b_1), \\
&\ldots,b_2G(b_1,b_2),b_1F(b_2,b_1)\Big)b_2G(b_1,b_2).
\end{aligned} \label{eq:Grecurse}
\end{align}
\end{thm}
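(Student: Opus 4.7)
The plan is to deduce the two formal power series identities by specializing the moment recursion of Proposition~\ref{prop:mrecurse} to alternating sequences of arguments, and then summing over $n$ and invoking multilinearity of the cumulant maps. Because we are dealing with formal power series, all rearrangements are legitimate as long as one tracks coefficient-by-coefficient equalities.

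First, I would observe from the definitions~\eqref{eq:F} and~\eqref{eq:G} that
\[
m_n^{(1)}(b_1,b_2,b_1,b_2,\ldots,b_1,b_2) \quad\text{and}\quad m_n^{(2)}(b_1,b_2,b_1,b_2,\ldots,b_1,b_2)
\]
are exactly the coefficients of the $n$-th term of $F(b_1,b_2)$ and $G(b_1,b_2)$, respectively, and that $F(b_2,b_1)$ and $G(b_2,b_1)$ are obtained by swapping the roles of $b_1$ and $b_2$. Therefore, to prove \eqref{eq:Frecurse}, I would substitute into~\eqref{eq:m1sum} the alternating pattern $b_i=b_1$ for $i$ odd and $b_i=b_2$ for $i$ even, and sum the resulting identity over $n\geq 0$ (separately checking the $n=0$ case, which contributes the constant $1$).

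The key bookkeeping step is to verify that the arguments of $\alpha_\ell^{(1)}$ on the right-hand side of~\eqref{eq:m1sum} assemble correctly. Since $s(j)=2(k(1)+\cdots+k(j-1))$ is always even, the parity of $s(j)+j$ agrees with the parity of $j$; thus for odd $j$, the leading coefficient $b_{s(j)+j}$ is $b_1$, and the subsequent argument list in $m_{k(j)}^{(2)}(b_{s(j)+j+1},\ldots)$ begins at an even index, producing the alternating pattern $b_2,b_1,b_2,b_1,\ldots$ on which $m_{k(j)}^{(2)}$ gives the $k(j)$-th coefficient of $G(b_2,b_1)$. For even $j$, the leading coefficient is $b_2$ and the argument list begins at an odd index, producing the pattern on which $m_{k(j)}^{(1)}$ gives the $k(j)$-th coefficient of $F(b_1,b_2)$. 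This accounts for the trailing factor $b_{s(2\ell)+2\ell}m_{k(2\ell)}^{(1)}(\ldots)$ as well, which contributes $b_2$ times the $k(2\ell)$-th coefficient of $F(b_1,b_2)$.

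Once this index accounting is established, summing over $n$ and over all $k(1),\ldots,k(2\ell)\geq 0$ uncouples the summations: for each fixed $\ell$, the multilinearity of $\alpha_\ell^{(1)}$ lets us pull the independent summations over the $k(i)$ inside the cumulant map and inside the trailing product, so that each slot of $\alpha_\ell^{(1)}$ (and the trailing factor) is replaced by the corresponding full series $b_1G(b_2,b_1)$ or $b_2F(b_1,b_2)$. The $(2\ell-1)$-multilinear $\alpha_\ell^{(1)}$ then receives $\ell$ slots of $b_1G(b_2,b_1)$ and $\ell-1$ slots of $b_2F(b_1,b_2)$, alternating, exactly as claimed in~\eqref{eq:Frecurse}. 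The identity~\eqref{eq:Grecurse} follows by the completely analogous manipulation starting from the recursion for $m_n^{(2)}$ stated in Proposition~\ref{prop:mrecurse}. The main (and really only) obstacle is the index-parity bookkeeping in the alternating substitution; once that is in hand, the theorem follows from multilinearity with no further work.
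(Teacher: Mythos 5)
Your proposal is correct and follows essentially the same route as the paper, which derives \eqref{eq:Frecurse}--\eqref{eq:Grecurse} directly from Proposition~\ref{prop:mrecurse} by identifying the degree-$2n$ terms of $F$ and $G$ with $m_n^{(1)}(b_1,b_2,\ldots,b_1,b_2)$ and $m_n^{(2)}(b_1,b_2,\ldots,b_1,b_2)$ and formally expanding; your index-parity bookkeeping (using that $s(j)$ is even) correctly fills in the details the paper leaves to the reader.
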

The meaning of the above formulas should be clear.
In~\eqref{eq:F} and~\eqref{eq:G}, the series are formal power series in variables $b_1$ and $b_2$, and for a given $n$,
the corresponding terms should be thought of as being of degree $2n$.
In the formulas~\eqref{eq:Frecurse} and~\eqref{eq:Grecurse}, the formulas on the right-hand-sides
mean the formal power series obtained by substituting and formally expanding.
Since the degree $2n$ term in~\eqref{eq:F} is precisely $m^{(1)}(b_1,b_2,b_1,b_2,\ldots,b_1,b_2)$, and in~\eqref{eq:G}
it is similarly $m^{(2)}(b_1,b_2,b_1,b_2,\ldots,b_1,b_2)$,
the assertions \eqref{eq:Frecurse}--\eqref{eq:Grecurse}  follow from Proposition~\ref{prop:mrecurse}.

The expressions in \eqref{eq:Frecurse}--\eqref{eq:Grecurse} can be treated more precisely and in a more general framework using
the notion of multilinear function series that was described in~\cite{D07}.
Formally, a {\em multilinear function series} over $B$ is a family $X=(\chi_n)_{n\ge0}$ where $\chi_0\in B$ is the constant term and
where, when $n\ge1$,
$\chi_n$ is a $n$-fold multilinear function $\chi_n:B^n\to B$.
The set $\Mul[[B]]$ of all multilinear function series over $B$ was shown in~\cite{D07} to be an associative ring
and a $B$-bimodule under natural operations.
For example, the product is given by $X\Psi$, for $X$ as above and $\Psi=(\psi_n)_{n\ge0}\in\Mul[[B]]$,
where the $0$-th term of $X\Psi$ is $\chi_0\psi_0$ and the $n$-th term, for $n\ge1$, is the multilinear map
\[
(b_1,\ldots,b_n)\mapsto\sum_{k=0}^n\chi_k(b_1,\ldots,b_k)\psi_{n-k}(b_{k+1},\ldots,n).
\]
Moreover, $\Mul[[B]]$ is equipped with a composition operation, (which will be generalized below).
The identity element of $\Mul[[B]]$ with respect to composition is denoted $I$;
it is the multilinear function series whose $n$-th term is $0$ unless $n=1$, in which case it is the identity map on $B$.

\begin{defi}
Let $(b_i)_{i=1}^\infty$ be a sequence in $B$.
Let us say that a {\em variable assignment} for a multlinear function series $X=(\chi_n)_{n=0}^\infty$
is a choice $v:\{(n,j)\in\Nats^2\mid n\ge j\}\to B$.
Then the {\em evaluation} of $X$ at $v$ is just the formal
sum 
\[
X(v)=\chi_0+\sum_{n=1}^\infty\chi_n(v(n,1),\ldots,v(n,n)).
\]
Note that, though this is ostensibly a summation of elements of $B$, no sort of convergence is assumed.
Formally, it may be thought of as the sequence
\[
(\chi_0,\chi_1(v(1,1)),\chi_2(v(2,1),v(2,2)),\ldots)
\]
 in $B$,
and equality of such formal sums amounts to equality of the corresponding sequences.
\end{defi}

For $i\in\{1,2\}$ we let $M^{(i)}\in\Mul[[B]]$ be the multilinear function series
whose odd terms are zero, whose $0$-th term is $1$ and whose $2n$-th term for $n\ge1$
is $m^{(i)}_n$ as described in Proposition~\ref{prop:mrecurse}.
If
\begin{equation}\label{eq:v}
v(n,j)=\begin{cases}
b_1,&j\text{ odd} \\
b_2,&j\text{ even,}
\end{cases}
\end{equation}
then in \eqref{eq:F}--\eqref{eq:G}, $F$  is just $M^{(1)}$ evaluated at $v$ and $G$ is just $M^{(2)}$ evaluated at $v$.

\begin{defi}\label{def:multicomp}
Suppose that $(\Psi^{(i)})_{i\in\Nats}$ is a family in $\Mul[[B]]$,
where each $\Psi^{(i)}=(\psi^{(i)}_n)_{n\ge0}$ has zero constant term and suppose that $X=(\chi_n)_{n\ge0}\in\Mul[[B]]$.
Let $f:\{(n,j)\in\Nats^2\mid n\ge j\}\to\Nats$ be a function.
Then the {\em multivariate composition }
\[
X\overset f\circ(\Psi^{(i)})_{i\in\Nats}
\]
is obtained by, heuristically, replacing the $j$-th argument of $\chi_n$ by $\Psi^{(f(n,j))}$ for each $n$ and $j$.
More precisely,  it
is the element of $\Mul[[B]]$ whose $n$-th term is $\chi_0$ when $n=0$ and when $n\ge1$ it is
the $n$-fold multilinear function that sends $(b_1,\ldots,b_n)$ to
\[
\sum_{p=1}^n\sum_{\substack{k(1),\ldots k(p)\ge1\\k(1)+\cdots+k(p)=n}}
\begin{aligned}[t]
\chi_p\big(
\psi_{k(1)}^{(f(p,1))}(b_1,&\ldots,b_{k(1)}),
\psi_{k(2)}^{(f(p,2))}(b_{k(1)+1},\ldots,b_{k(1)+k(2)}),\cdots, \\
&\psi_{k(p)}^{(f(p,p))}(b_{k(1)+\cdots+k(p-1)+1},\ldots,b_{k(1)+\cdots+k(p-1)+k(p)})
\big ).
\end{aligned}
\]
\end{defi}

The following result is a rephrasing of Proposition~\ref{prop:mrecurse}.
Note that the product $IM^{(i)}$ is given by
$(IM^{(i)})_n=0$ if $n$ is even and
\[
(IM^{(i)})_{2n+1}(b_1,\ldots,b_{2n+1})=\begin{cases}
b_1m^{(i)}_n(b_2,b_3,\ldots,b_{2n+1}),&n\ge1 \\
b_1,&n=0.
\end{cases}
\]
\begin{prop}
For $i=1,2$, let $A^{(i)}$ be the multilinear function series with zero constant term,
whose even terms vanish and whose $(2\ell-1)$-th term is $\alpha^{(i)}_\ell$.
Let $f,g:\{(n,j)\in\Nats^2\mid n\ge j\}\to\{1,2\}$ be given by $f(n,j)\equiv j+1\mod 2$ and $g(n,j)\equiv j\mod 2$.
Then
\begin{align}
M^{(1)}&=1+\big(A^{(1)}\overset f\circ(IM^{(1)},IM^{(2)})\big)M^{(1)} \label{eq:M1recurse} \\
M^{(2)}&=1+\big(A^{(2)}\overset g\circ(IM^{(1)},IM^{(2)})\big)M^{(2)}. \label{eq:M2recurse} 
\end{align}
\end{prop}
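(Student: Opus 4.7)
The proposition is a compact repackaging of Proposition~\ref{prop:mrecurse} in the ring $\Mul[[B]]$, so the plan is to verify the two equations term by term, degree by degree, with no new probabilistic input needed.

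First, I would check the constant term. By definition $(M^{(i)})_0=1$, while on the right-hand side the series $A^{(i)}$ has vanishing constant term; by Definition~\ref{def:multicomp} this forces the multivariate composition also to have vanishing constant term, so the product contributes nothing at degree $0$, and only the summand ``$1$'' survives. Thus both sides agree in degree $0$.

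For the main step, fix $n\ge 1$ and expand the degree-$2n$ coefficient of the right-hand side. The series $A^{(1)}$ is supported on odd indices $2\ell-1$ where it equals $\alpha_\ell^{(1)}$, while the paragraph preceding the proposition shows that $(IM^{(i)})_m$ is supported on odd $m$, taking the value $b_1\,m^{(i)}_{(m-1)/2}(b_2,\ldots,b_m)$. Applying Definition~\ref{def:multicomp} to $A^{(1)}\overset f\circ(IM^{(1)},IM^{(2)})$, the only nonvanishing contributions come from choices of $\ell\ge 1$ together with inner indices $k(j)=2k'(j)+1$ for $j=1,\ldots,2\ell-1$. The prescription $f(p,j)\equiv j+1\pmod 2$ then forces the $j$-th slot of $\alpha_\ell^{(1)}$ to be filled by $IM^{(2)}$ for odd $j$ and by $IM^{(1)}$ for even $j$, which reproduces precisely the alternation $m^{(2)},m^{(1)},m^{(2)},\ldots,m^{(2)}$ of the arguments of $\alpha_\ell^{(1)}$ in \eqref{eq:m1sum}.

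Third, I would substitute $(IM^{(i)})_{2k'(j)+1}(b_{[j]})=b_{s(j)+j}\,m^{(i)}_{k'(j)}(b_{s(j)+j+1},\ldots,b_{s(j)+j+2k'(j)})$ into each slot and track the cumulative offset $s(j)=2(k(1)+\cdots+k(j-1))$ as the argument ranges concatenate. The trailing factor coming from the outer product contributes the piece $b_{s(2\ell)+2\ell}\,m^{(1)}_{k(2\ell)}(\cdots)$ that appears outside the cumulant in \eqref{eq:m1sum}, and the global constraint $\sum_{j=1}^{2\ell}k(j)=n-\ell$ appears automatically from requiring the two factors to have total degree $2n$. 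Summing over $\ell$ and all admissible $k(1),\ldots,k(2\ell)$ then reproduces the entire right-hand side of \eqref{eq:m1sum}, which by Proposition~\ref{prop:mrecurse} equals $m^{(1)}_n(b_1,\ldots,b_{2n})=(M^{(1)})_{2n}(b_1,\ldots,b_{2n})$. Equation \eqref{eq:M2recurse} is handled identically after interchanging $f\leftrightarrow g$ and $M^{(1)}\leftrightarrow M^{(2)}$, since $g(p,j)\equiv j\pmod 2$ dictates the opposite alternation, matching the second formula of Proposition~\ref{prop:mrecurse}.

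The main obstacle is purely combinatorial bookkeeping: one must verify that the index ranges produced by successive applications of Definition~\ref{def:multicomp} match the cumulative sums $s(j)$ used in Proposition~\ref{prop:mrecurse}, and that the parity constraints imposed by the structure of $A^{(i)}$ and $IM^{(i)}$, together with the prescribed values of $f$ or $g$, reproduce the alternation in \eqref{eq:m1sum} exactly. Beyond this bookkeeping, no further input is required.
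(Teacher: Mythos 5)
Your overall strategy --- unwind Definition~\ref{def:multicomp} and the product in $\Mul[[B]]$ degree by degree and match the outcome against Proposition~\ref{prop:mrecurse} --- is exactly what the paper intends (the paper offers nothing beyond the remark that the proposition is a rephrasing of Proposition~\ref{prop:mrecurse}), and your treatment of the composition $A^{(1)}\overset{f}{\circ}(IM^{(1)},IM^{(2)})$ is correct: the support conditions force $p=2\ell-1$ and odd inner indices $k(j)=2k'(j)+1$, the map $f$ produces the alternation $m^{(2)},m^{(1)},\ldots,m^{(2)}$ in the slots of $\alpha^{(1)}_\ell$, and the concatenated argument ranges reproduce the offsets $s(j)$ of \eqref{eq:m1sum}.

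The step that does not go through as written is the trailing factor. You assert that multiplying by $M^{(1)}$ ``contributes the piece $b_{s(2\ell)+2\ell}\,m^{(1)}_{k(2\ell)}(\cdots)$.'' It cannot: the product in $\Mul[[B]]$ merely concatenates arguments, so the factor $M^{(1)}$ would contribute $(M^{(1)})_{2n-K}(b_{K+1},\ldots,b_{2n})$ with no separated leading $b$; the term you wrote down is $(IM^{(1)})_{2n-K}(b_{K+1},\ldots,b_{2n})$. In fact \eqref{eq:M1recurse} cannot hold literally as displayed: the composition is supported in odd degrees (a sum of $2\ell-1$ odd numbers is odd) while $M^{(1)}$ is supported in even degrees, so $\big(A^{(1)}\overset{f}{\circ}(IM^{(1)},IM^{(2)})\big)M^{(1)}$ is supported in odd degrees, and already in degree $2$ the right-hand side vanishes while $(M^{(1)})_2(b_1,b_2)=\alpha^{(1)}_1(b_1)b_2$ is generally nonzero. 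The repair is to take the trailing factor to be $IM^{(1)}$ (resp.\ $IM^{(2)}$): its $(2k(2\ell)+1)$-th term is precisely $b_{s(2\ell)+2\ell}\,m^{(1)}_{k(2\ell)}(b_{s(2\ell)+2\ell+1},\ldots,b_{2n})$, the degrees then sum to $2n$, and this is also what is needed to recover the factor $b_2F(b_1,b_2)$ in \eqref{eq:Frecurse} upon evaluation at $v$ (the trailing arguments begin at an even position). So your bookkeeping actually establishes the corrected identity $M^{(1)}=1+\big(A^{(1)}\overset{f}{\circ}(IM^{(1)},IM^{(2)})\big)\big(IM^{(1)}\big)$ and its counterpart for $M^{(2)}$; you should say so explicitly and flag the needed emendation of the statement, rather than attributing to the product with $M^{(1)}$ a term it cannot produce.
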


Now \eqref{eq:Frecurse}--\eqref{eq:Grecurse} result from evaluating both sides of \eqref{eq:M1recurse}--\eqref{eq:M2recurse} at $v$,
as given in~\eqref{eq:v}.
This, then, is a formal interpretation of Theorem~\ref{thm:powerseries}.

\section{Traces and polar decomposition}
\label{sec:polar}

In this section, we suppose that $B$ is a unital $*$-algebra and $(A,\Ec)$ is a $B$-valued $*$-noncommutative probability space.
We investigate questions of traces and polar decomposition for $B$-valued R-diagonal elements.

First some notation we will use in this section: if $a\in A$, then we write
$a_1=a$ and $a_2=a^*$ and for $k\in\Nats$, let $\beta^{(1)}_k$ and $\beta^{(2)}_k$ be the multilinear maps
\begin{equation}\label{eq:betas}
\beta^{(i)}_k:\underset{2k-1}{\underbrace{B\times\cdots\times B}}\to B
\end{equation}
that are the cumulant maps for the pair $(a_1,a_2)$ corresponding to the alternating sequences
$(1,2,\ldots,1,2)$ and $(2,1,\ldots,2,1)$, respectively, each of length $2k$.
Note that, from Proposition~\ref{prop:alpha*}, we have
\begin{equation}\label{eq:betai*}
\beta^{(i)}_k(b_1,\ldots,b_{2k-1})^*=\beta^{(i)}_k(b_{2k-1}^*,\ldots,b_1^*)
\end{equation}
for all $i=1,2$, $k\ge1$ and $b_1,\ldots,b_{2k-1}\in B$.

The following result is an immediate consequence of Proposition~\ref{prop:trace}.
\begin{prop}\label{prop:traceRdiag}
Suppose $a\in A$ is $B$-valued R-diagonal.
Suppose $\tau$ is a tracial linear functional on $B$.
Then $\tau\circ\Ec$ restricted to $\alg(B\cup\{a,a^*\})$ is tracial if and only if,
for all $k\ge1$ and all $b_1,\ldots,b_{2k}\in B$, we have
\[
\tau(\beta^{(1)}_k(b_1,\ldots,b_{2k-1})b_{2k})=\tau(b_1\beta^{(2)}_k(b_2,\ldots,b_{2k})).
\]
\end{prop}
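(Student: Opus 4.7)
The plan is to apply Proposition~\ref{prop:trace} to the pair $(a_1,a_2)=(a,a^*)$, with index set $I=\{1,2\}$. The algebra $\alg(B\cup\{a,a^*\})$ is exactly the image of $B\langle X_1,X_2\rangle$ under evaluation at $(a,a^*)$, so traciality of $\tau\circ\Ec$ on it is condition~\ref{it:tracial} of Proposition~\ref{prop:trace}. By the equivalence \ref{it:tracial}$\Leftrightarrow$\ref{it:alphatrace}, this is equivalent to
\[
\tau(\alpha_j(b_1,\ldots,b_{n-1})b_n)=\tau(b_1\alpha_{c(j)}(b_2,\ldots,b_n))
\]
for all $n\ge 1$, $j\in\{1,2\}^n$, $b_1,\ldots,b_n\in B$.

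Next I would use R-diagonality to eliminate most cases. By Theorem~\ref{thm:rdiagEq}\ref{it:cum}, $\alpha_j$ vanishes unless $n=2k$ is even and $j$ is (linearly) alternating, that is, $j=(1,2,\ldots,1,2)$ or $j=(2,1,\ldots,2,1)$. A direct check shows that the cyclic shift $c$ preserves length and the alternating/non-alternating dichotomy: for even $n$, the only two linearly alternating words map to each other under $c$, and any sequence having two equal consecutive entries retains such a pair cyclically. Hence both sides of the displayed identity vanish outside the even-length, alternating cases. The $n=1$ case is trivially satisfied, since $\alpha_{(1)}=\Ec(a)=0$ and $\alpha_{(2)}=\Ec(a^*)=0$ by R-diagonality, so that both sides reduce to $0$ after using traciality of $\tau$.

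The surviving instances are those with $j=(1,2,\ldots,1,2)$, which give exactly the identity
\[
\tau(\beta^{(1)}_k(b_1,\ldots,b_{2k-1})b_{2k})=\tau(b_1\beta^{(2)}_k(b_2,\ldots,b_{2k}))
\]
of the proposition, and those with $j=(2,1,\ldots,2,1)$, which yield its swap under $\beta^{(1)}_k\leftrightarrow\beta^{(2)}_k$. This swapped identity is the same statement applied to the R-diagonal element $a^*$ (Remark~\ref{rem:a*Rdiag}) in the role of $a$, so it falls under the same hypothesis on the R-diagonal element; consequently the two cases collapse into the single condition stated.

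The only step that requires a moment's thought is the observation that the two alternating cases are equivalent expressions of the same symmetry between $a$ and $a^*$. Once that is noted, everything else is bookkeeping: the vanishing of non-alternating cumulants from Theorem~\ref{thm:rdiagEq}\ref{it:cum} collapses condition~\ref{it:alphatrace} of Proposition~\ref{prop:trace} to exactly the displayed identity, and the proposition drops out.
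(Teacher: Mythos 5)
Your route is the paper's: the authors derive this proposition as an immediate consequence of Proposition~\ref{prop:trace}, and your fleshing-out --- pass to the pair $(a,a^*)$, invoke the equivalence \ref{it:tracial}$\Leftrightarrow$\ref{it:alphatrace}, and use condition~\ref{it:cum} of Theorem~\ref{thm:rdiagEq} to kill every $\alpha_j$ with $j$ not even and alternating --- is exactly the intended argument. Your observation that the cyclic shift $c$ preserves the class of non-(even-alternating) words, so that those instances of~\eqref{eq:taualpha} read $0=0$, is correct and worth making explicit.

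There is, however, a genuine gap in your final step. Proposition~\ref{prop:trace}\ref{it:alphatrace} leaves you with \emph{two} surviving families of identities: the stated one, $\tau(\beta^{(1)}_k(b_1,\ldots,b_{2k-1})b_{2k})=\tau(b_1\beta^{(2)}_k(b_2,\ldots,b_{2k}))$, coming from $j=(1,2,\ldots,1,2)$, and the swapped one, $\tau(\beta^{(2)}_k(b_1,\ldots,b_{2k-1})b_{2k})=\tau(b_1\beta^{(1)}_k(b_2,\ldots,b_{2k}))$, coming from $j=(2,1,\ldots,2,1)$. For the ``if'' direction you must show the first implies the second, and your justification --- that the swapped identity is ``the same statement applied to $a^*$'' --- is circular: it invokes the proposition for the R-diagonal element $a^*$ in order to prove it for $a$. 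Nor do the two identities coincide after relabelling: setting $T^{(i)}(b_1,\ldots,b_{2k})=\tau(\beta^{(i)}_k(b_1,\ldots,b_{2k-1})b_{2k})$ and letting $\sigma$ denote the cyclic shift of arguments, traciality of $\tau$ turns the first identity into $T^{(1)}=T^{(2)}\circ\sigma$ and the second into $T^{(2)}=T^{(1)}\circ\sigma$; together these force the extra symmetry $T^{(1)}\circ\sigma^2=T^{(1)}$, so the second is not a formal consequence of the first for arbitrary multilinear maps. What closes the gap is the $*$-structure: by~\eqref{eq:betai*} we have $\beta^{(i)}_k(b_1,\ldots,b_{2k-1})^*=\beta^{(i)}_k(b_{2k-1}^*,\ldots,b_1^*)$, so substituting $b_j\mapsto b_{2k+1-j}^*$ into the stated identity and then applying $x\mapsto\overline{\tau(x^*)}$ to both sides yields precisely the swapped identity, provided $\tau$ satisfies $\tau(b^*)=\overline{\tau(b)}$ (as it does in all of the paper's applications). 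You should add this computation --- or at least an explicit appeal to the $j=(2,1,\ldots,2,1)$ case --- to make the ``if'' direction complete.
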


We now turn to results about $B$-valued R-diagonal elements writen in the form $up$ where $u$ is unitary 
and where $p=p^*$.
First, we make an observation about the polar decompositions of R-diagonal elements in W$^*$-noncommutative probability spaces.
\begin{prop}\label{prop:W*polar}
Suppose $B$ is a von Neumann algebra and $(A,\Ec)$ is a $B$-valued W$^*$-noncommutative probability space.
Suppose $a\in A$ is $B$-valued R-diagonal and $a=v|a|$ is the polar decomposition of $a$.
Then $\Ec(v^n)=0$ for all integers $n\ge1$.
Thus, if $v$ is unitary, then it is a Haar unitary.
\end{prop}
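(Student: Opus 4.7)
The plan is to apply condition~(d) of Theorem~\ref{thm:rdiagEq} and exploit the uniqueness and normality of polar decomposition in a von Neumann algebra. First I would construct an enlargement $(\At,\Ect)$ of $(A,\Ec)$ as a W$^*$-noncommutative probability space (via a standard W$^*$-amalgamated free product) containing a Haar unitary $u\in\At$ that commutes with every element of $B$ and with $\{u,u^*\}$ free from $\{a,a^*\}$ over $B$. By condition~(d), $a$ and $ua$ then have the same $B$-valued $*$-distribution.

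Next I would observe that, since $(ua)^*(ua)=a^*u^*ua=a^*a$ and $(uv)^*(uv)=v^*v$ is the support projection of $|a|$, uniqueness of polar decomposition gives $ua=(uv)|a|$ with $|ua|=|a|$. The key step is then to transfer the equality of $B$-valued $*$-distributions of $a$ and $ua$ to the corresponding equality for the polar partial isometries $v$ and $uv$. This rests on the fact that polar decomposition in a von Neumann algebra is a normal construction---for instance $v=\mathrm{SOT}\text{-}\lim_{\eps\searrow 0}a(a^*a+\eps)^{-1/2}$, with each $a(a^*a+\eps)^{-1/2}$ a norm limit of polynomials in $a$ and $a^*$---combined with the normality of $\Ect$. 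In particular, $\Ec(v^n)=\Ect((uv)^n)$ for every $n\ge 1$.

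Finally I would compute $\Ect((uv)^n)$ by expansion. Setting $b:=\Ec(v)\in B$ and $v^\circ:=v-b\in\ker\Ec$, and using that $u$ commutes with $B$, we have $(uv)^n=\prod_{i=1}^n(bu+uv^\circ)$. Grouping consecutive $bu$-factors and commuting each $b$ past the adjacent $u$'s, a typical monomial takes the form
\[
b^{j_0}\,u^{m_0}(v^\circ b^{j_1})u^{m_1}(v^\circ b^{j_2})\cdots(v^\circ b^{j_k})u^{m_k},
\]
with $k\ge 0$, $j_i\ge 0$, $m_0,\ldots,m_{k-1}\ge 1$, and $m_k\ge 0$. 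The case $k=0$ gives $b^n u^n$, of expectation zero since $u$ is Haar. For $k\ge 1$, after pulling out $b^{j_0}\in B$, the remainder is an alternating product of elements drawn from the two subalgebras $\alg(B\cup\{u,u^*\})$ and $\alg(B\cup\{v,v^*\})$ (which are free over $B$), each factor centered ($\Ect(u^{m_i})=0$ for $m_i\ge 1$ and $\Ec(v^\circ b^{j_i})=0$, with the trivial $u^0$ simply omitted when $m_k=0$); so its expectation vanishes by freeness over $B$. Hence $\Ect((uv)^n)=0$, giving $\Ec(v^n)=0$; the final claim about Haar unitarity is then immediate from the definition.

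The main obstacle is the middle step: rigorously transferring the equality of $B$-valued $*$-distributions of $a$ and $ua$ through polar decomposition. This relies essentially on the W$^*$-setting, the normality of $\Ect$, and the SOT-approximation of the polar partial isometry by bounded polynomials in $a$ and $a^*$. The combinatorial expansion at the end is then routine once freeness over $B$ is in place.
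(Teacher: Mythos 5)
Your proof is correct and follows essentially the same route as the paper's: enlarge to a W$^*$-amalgamated free product containing a $B$-commuting Haar unitary $u$ free from $\{a,a^*\}$, invoke Theorem~\ref{thm:rdiagEq} to conclude that $a$ and $ua$ are identically $*$-distributed, transfer this through the polar decomposition (using normality and the SOT-approximation of the partial isometry) to conclude that $v$ and $uv$ are identically $*$-distributed, and then show $\Ect((uv)^n)=0$ by freeness. The only nit is that, for an enlargement you construct yourself, the condition to cite is~\ref{it:u} rather than~\ref{it:u0} of Theorem~\ref{thm:rdiagEq}; your concluding expansion, which the paper leaves as ``straightforward,'' is carried out correctly.
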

\begin{proof}
Using the amalgamated free product construction for von Neumann algebras,
we get a larger W$^*$-noncommutative probability space
$(\At,\Ect)$ containing a Haar unitary $u$ that commutes with every element of $B$ and such that $\{a,a^*\}$ and $\{u,u^*\}$
are free with respect to $\Ect$.
Thus, $\{|a|,v,v^*\}$ and $\{u,u^*\}$ are free with respect to $\Ect$.
By condition~\ref{it:u} of Theorem~\ref{thm:rdiagEq}, $ua$ has the same $B$-valued $*$-distribution with respect to $\Ect$ as
does $a$ with respect to $\Ec$.
The polar decomposition of $ua$ is $uv|a|$.
Since we are in a W$^*$-noncommutative probability space, given any element $x$ with polar decomposition $x=w|x|$,
the joint $B$-valued $*$-distribution of the pair $(w,|x|)$ is determined by the $B$-valued $*$-distribution of $x$.
Therefore, the $B$-valued $*$-distribution of $uv$ equals that of $v$.
Now, using that $u$ is a Haar unitary that commutes with $B$ and that $u$ and $v$ are $*$-free, it is straightforward to show
$\Ect((uv)^n)=0$ for every integer $n\ge1$.
Thus, also $\Ect(v^n)=0$.
\end{proof}

Proposition~2.6 of \cite{NSS01} shows that in the scalar-valued case, given an R-diagonal element in a tracial
$*$-noncommutative probability space, it has the same $*$-distribution as some element $up$,
where $u$ is a Haar unitary, $p$ is self-adjoint and $u$ is $*$-free from $p$.
The following observation will be used in Example~\ref{ex:nofreepolar} below,
to show that the analogous statement need not hold in the algebra-valued case.
\begin{lemma}\label{lem:notfree}
Suppose in a $B$-valued $*$-noncommutative probability space $(A,\Ec)$,
$a=up\in A$ is $B$-valued $R$-diagonal, where $u\in A$ is unitary,
where $p=p^*\in A$ and where $\{u,u^*\}$ and $\{p\}$ are free
(over $B$) with respect to $\Ec$.
If $\beta_1^{(2)}(1)\in\Cpx1$, then $\beta_1^{(1)}(1)=\beta_1^{(2)}(1)$.
\end{lemma}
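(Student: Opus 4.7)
My plan is to compute both $\beta_1^{(1)}(1)$ and $\beta_2^{(1)}(1,1,1)$ in closed form using the factorization $a=up$ together with the amalgamated freeness of $\{u,u^*\}$ and $\{p\}$ over $B$, and then to exploit the scalar-valued hypothesis to force the claimed equality.

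First I would apply the moment-cumulant formula to the length-$4$ alternating $*$-moment $\Ec(aa^*aa^*)$. By Theorem~\ref{thm:rdiagEq}\ref{it:cum} only noncrossing partitions of $\{1,2,3,4\}$ whose blocks restrict to alternating even-length subsequences of $(1,2,1,2)$ contribute. The surviving partitions are $1_4$, $\{\{1,2\},\{3,4\}\}$ and $\{\{1,4\},\{2,3\}\}$, and the recursion \eqref{eq:cumrec} yields
\[
\Ec(aa^*aa^*)=\beta_2^{(1)}(1,1,1)+\beta_1^{(1)}(1)^{2}+\beta_1^{(1)}\!\bigl(\beta_1^{(2)}(1)\bigr).
\]

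Next I would rewrite each term via $a=up$, $a^*=pu^*$, $u^*u=1$ and $p^*=p$: $\Ec(aa^*aa^*)=\Ec(up^{4}u^*)$, $\beta_1^{(1)}(1)=\Ec(up^{2}u^*)$, and $\beta_1^{(2)}(1)=\Ec(pu^*up)=\Ec(p^{2})$. Setting $\phi(b):=\Ec(ubu^*)$ for $b\in B$, the key reduction is that $\Ec(uxu^*)=\phi(\Ec(x))$ for every $x$ in the $B$-subalgebra generated by $p$. One sees this by writing $x=(x-\Ec(x))+\Ec(x)$, and verifying that the centered part contributes $\Ec(u(x-\Ec(x))u^*)=0$: after further centering $u$ and $u^*$, every resulting term becomes an expectation of an alternating product of centered elements in the two free subalgebras and so vanishes by amalgamated freeness. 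Combining these computations gives the closed form
\[
\beta_2^{(1)}(1,1,1)=\phi(\Ec(p^{4}))-\phi(\Ec(p^{2}))^{2}-\phi(\Ec(p\,\Ec(p^{2})\,p)),
\qquad
\beta_1^{(1)}(1)=\phi(\Ec(p^{2})).
\]

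Finally I would invoke the hypothesis that the left-hand side of the first displayed equation lies in $\Cpx 1$. The plan at this stage is to use the structural properties of $\phi$ arising from the unitarity of $u$ and the amalgamated freeness — in particular $\phi$ is $\Cpx$-linear with $\phi(1)=1$ and $\phi$ restricted to $\Cpx 1$ is the identity — together with the bimodule property of $\Ec$ and the self-adjointness of $p$, to rearrange the closed form for $\beta_2^{(1)}(1,1,1)$ and match it against $\phi(\Ec(p^{2}))$.

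The main obstacle I expect is exactly this last step: converting the scalar-in-$\Cpx 1$ hypothesis on the combination $\phi(\Ec(p^{4}))-\phi(\Ec(p^{2}))^{2}-\phi(\Ec(p\,\Ec(p^{2})\,p))$ into the precise identity $\phi(\Ec(p^{2}))=\beta_2^{(1)}(1)$. I anticipate that the necessary algebraic leverage comes from pairing the above with the analogous moment-cumulant expansion of $\Ec(a^{*}aa^{*}a)=\Ec(p^{4})$, which yields a $B$-valued relation between $\beta_2^{(2)}(1,1,1)$, $\rho_{1}:=\Ec(p^{2})$ and $\rho_{2}:=\Ec(p^{4})$, and then applying $\phi$ to that relation to match it against the scalar hypothesis.
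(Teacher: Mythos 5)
The statement you are trying to prove is not the one the lemma intends, and the version you are aiming at is false. The maps $\beta^{(i)}_k$ are $(2k-1)$-multilinear, so an expression of the form $\beta^{(i)}_k(1)$ with a single argument forces $k=1$; the printed ``$\beta_2^{(1)}(1)$'' is a transposition of indices and should read $\beta_1^{(2)}(1)=\alpha_{(2,1)}(1)=\Ec(a^*a)$. This is confirmed by the way the lemma is invoked in Example~\ref{ex:nofreepolar}, where the relevant quantities are $\alpha_{(2,1)}(1)=1$ and $\alpha_{(1,2)}(1)\ne1$, i.e.\ $\beta_1^{(2)}(1)$ versus $\beta_1^{(1)}(1)$. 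Your reading, $\beta_2^{(1)}(1,1,1)$, leads to a false assertion: for a scalar-valued circular element $a=up$ (with $u$ Haar unitary, $p$ quarter-circular, $\{u,u^*\}$ and $\{p\}$ free, $\tau(p^2)=1$) one has $\beta_2^{(1)}(1,1,1)=0\in\Cpx1$ while $\beta_1^{(1)}(1)=\tau(aa^*)=1$. Consistently with this, your argument stalls exactly where it must: the ``obstacle'' you flag in the last paragraph cannot be overcome. Your intermediate computations are in fact correct --- the expansion $\Ec(aa^*aa^*)=\beta_2^{(1)}(1,1,1)+\beta_1^{(1)}(1)^2+\beta_1^{(1)}\bigl(\beta_1^{(2)}(1)\bigr)$ and the reduction $\Ec(uxu^*)=\phi(\Ec(x))$ for $x\in\alg(B\cup\{p\})$ are both right --- but they cannot yield the claimed identity.

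For the intended statement you already have every ingredient, and the paper's proof is two lines. From $a=up$, $u^*u=1$ and $p=p^*$ one gets $\beta_1^{(2)}(1)=\Ec(a^*a)=\Ec(p^2)$, while freeness gives $\beta_1^{(1)}(1)=\Ec(aa^*)=\Ec(up^2u^*)=\Ec\bigl(u\,\Ec(p^2)\,u^*\bigr)=\phi\bigl(\beta_1^{(2)}(1)\bigr)$ in your notation. If $\beta_1^{(2)}(1)=\lambda 1$ with $\lambda\in\Cpx$, then $\Ec(u\,\lambda 1\,u^*)=\lambda\,\Ec(uu^*)=\lambda 1$, i.e.\ $\phi$ fixes $\Cpx1$ --- a fact you state yourself --- so $\beta_1^{(1)}(1)=\beta_1^{(2)}(1)$. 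No fourth-order moments or cumulants are needed.
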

\begin{proof}
We have $\beta_1^{(2)}(1)=\Ec(a^*a)=\Ec(p^2)$,
while using freeness of $\{u,u^*\}$ and $\{p\}$, we find
\[
\beta_1^{(1)}(1)=\Ec(up^2u^*)=\Ec(u\Ec(p^2)u^*)=\beta_1^{(2)}(1)\Ec(uu^*)=\beta_1^{(2)}(1)\Ec(1)=\beta_1^{(2)}(1).
\]
\end{proof}

We now study R-diagonal elements that can be written in the form $up$ where $u$ is a $B$-normalizing Haar unitary,
$p=p^*$ and the sets $\{u,u^*\}$ and $\{p\}$ are free over $B$.
It turns out these can be characterized in terms of cumulants.
Our study culminates in Theorem~\ref{thm:C*pos} and Corollary~\ref{cor:polarfree},
which apply to C$^*$- and W$^*$-noncommutative probability spaces, but along the way we prove algebraic versions too.

The following proposition includes an analogue of Proposition 2.6 of \cite{NSS01}.
Note that the equivalence of parts \ref{it:Eaa*}--\ref{it:up} of it are special cases of Proposition~\ref{prop:HUtheta}, below.
However, we include the statment and proof of it, because both are easier to follow than the more general case,
and serve as templates.
\begin{prop}\label{prop:HU}
Let $a\in A$.
Then the following are equivalent:
\begin{enumerate}[label=(\alph*),leftmargin=20pt]
\item\label{it:Eaa*} $a$ is $B$-valued R-diagonal and for all $k\ge1$ and all $b_1,\ldots,b_{2k-1}\in B$, we have
\begin{equation}\label{eq:Eaba*}
\Ec(a^*b_1ab_2a^*b_3a\cdots b_{2k-2}a^*b_{2k-1}a)=\Ec(ab_1a^*b_2ab_3a^*\cdots b_{2k-2}ab_{2k-1}a^*).
\end{equation}
\item\label{it:beta12} $a$ is $B$-valued R-diagonal and for all $k\ge1$, we have agreement $\beta^{(1)}_k=\beta^{(2)}_k$ of the $2k$-th order
$B$-valued cumulant maps
\item\label{it:up0} There exists a $B$-valued $*$-noncommutative probability space $(\At,\Ect)$
with elements $p,u\in\At$ such that 
\begin{enumerate}[label=(\roman*)]
\item\label{it:psa} $p=p^*$
\item\label{it:uHU} $u$ is a Haar unitary that commutes with every element of $B$
\item\label{it:upfree} $\{u,u^*\}$ and $\{p\}$ are free (over $B$) with respect to $\Ec$
\item\label{it:updista} the element $up$ has the same $B$-valued $*$-distribution as $a$
\item\label{it:podds} the odd moments of $p$ vanish, i.e., for every $k\ge0$ and all $b_1,\ldots,b_{2k}\in B$, we have
\begin{equation}\label{eq:poddmoms}
\Ect(pb_1pb_2\cdots pb_{2k}p)=0.
\end{equation}
\end{enumerate}
\item\label{it:up} There exists a $B$-valued $*$-noncommutative probability space $(\At,\Ect)$
with elements $p,u\in\At$ such that parts \ref{it:psa}--\ref{it:updista} of~\ref{it:up0} hold.
\item\label{it:adista*} $a$ is $B$-valued R-diagonal and has the same $B$-valued $*$-distribution as $a^*$.
\end{enumerate}
In part~\ref{it:up0} or~\ref{it:up}, the even moments of $p$ are given by,
for every $k\ge1$ and all $b_1,\ldots,b_{2k-1}\in B$,
\begin{equation}\label{eq:pevenmoms}
\Ect(pb_1pb_2\cdots pb_{2k-1}p)=\Ec(a^*b_1ab_2a^*b_3a\cdots b_{2k-2}a^*b_{2k-1}a).
\end{equation}
\end{prop}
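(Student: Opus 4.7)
I will establish $(a)\Leftrightarrow(b)\Leftrightarrow(e)$ as a first block, and then close the circle $(a)\Rightarrow(c)\Rightarrow(d)\Rightarrow(a)$ with $(c)\Rightarrow(d)$ being trivial. The equivalence $(a)\Leftrightarrow(b)$ is essentially the invertibility of the moment-cumulant formula~\eqref{eq:alphacummom} specialized to the alternating sequences $(1,2,1,2,\ldots)$ and $(2,1,2,1,\ldots)$. In such a sum, a partition $\pi\in\NC(2k)$ contributes a product in which each block of size $2\ell$ yields $\beta_\ell^{(1)}$ or $\beta_\ell^{(2)}$ according to the parity of the block's leftmost element, and interchanging ``starts with $a$'' and ``starts with $a^*$'' in the outer word swaps $\beta^{(1)}$ and $\beta^{(2)}$ throughout; the partition $\pi=1_{2k}$ is the unique one involving $\beta_k^{(i)}$ itself. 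Thus by induction on $k$, equality of the two alternating moments for every $b_1,\ldots,b_{2k-1}$ is equivalent to $\beta_k^{(1)}=\beta_k^{(2)}$. The equivalence $(a)\Leftrightarrow(e)$ is immediate from Remark~\ref{rem:l2.1}: the $B$-valued $*$-distribution of an R-diagonal element is determined by its even alternating moments, and~\eqref{eq:Eaba*} is precisely the statement that these moments agree for $a$ and for $a^*$.

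For $(d)\Rightarrow(a)$, I compute the alternating moments of $a=up$ directly. Using $p=p^*$ and that $u$ commutes with $B$, the word rewrites as
\[
\Ec(ab_1a^*b_2a\cdots b_{2k-1}a^*)=\Ect(u\cdot pb_1pb_2p\cdots pb_{2k-1}p\cdot u^*),
\]
and the collapse $\Ect(uwu^*)=\Ect(w)$ for $w\in\alg(B\cup\{p\})$ follows by centering $w$ around $\Ect(w)\in B$ and applying freeness of $\{u,u^*\}$ and $\{p\}$ together with $\Ect(u)=\Ect(u^*)=0$. The identical calculation applied to $\Ec(a^*b_1a\cdots b_{2k-1}a)$ produces the same value, establishing both the formula~\eqref{eq:pevenmoms} and the symmetry~\eqref{eq:Eaba*}. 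To confirm that $a$ is R-diagonal I appeal to condition~\ref{it:u0} of Theorem~\ref{thm:rdiagEq}: enlarging $\At$ by amalgamated free product to adjoin a Haar unitary $u'$ commuting with $B$ and $*$-free from $\{u,p\}$, the product $u'u$ is again a Haar unitary commuting with $B$ and $*$-free from $p$, so $u'a=(u'u)p$ and $a=up$ have the same $B$-valued $*$-distribution.

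For $(a)\Rightarrow(c)$, I construct $p$ abstractly. Take $A_p:=B\langle X\rangle$ to be the free $*$-algebra generated over $B$ by a self-adjoint variable $X$, and define $\Ec_p:A_p\to B$ by declaring all odd moments in $X$ to vanish and specifying the even moments by the right-hand side of~\eqref{eq:pevenmoms} (with $p$ replaced by $X$). Hermiticity of $\Ec_p$ reduces to the identity $\Ec(a^*b_1a\cdots b_{2k-1}a)^*=\Ec(a^*b_{2k-1}^*a\cdots b_1^*a)$, which holds by $*$-preservation of $\Ec$, and $B$-bilinearity is built in. Taking the algebraic amalgamated free product of $(A_p,\Ec_p)$ with the group-algebra space $(B\otimes\Cpx[C_\infty],\Fc)$ used in the proof of Theorem~\ref{thm:rdiagEq} supplies the required $u$. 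Condition~\ref{it:u0p} of Theorem~\ref{thm:rdiagEq} then shows $up$ is R-diagonal (since $p=p^*$ has vanishing odd moments), and the calculation from the preceding paragraph matches the even alternating moments of $up$ with those of $a$; Remark~\ref{rem:l2.1} delivers the equality of $B$-valued $*$-distributions. The main obstacle is the bookkeeping to define the abstract $p$-distribution consistently, together with the careful freeness argument underlying $\Ect(uwu^*)=\Ect(w)$, which requires both $\Ect(u)=0$ and $\Ect(u^*)=0$ rather than just the former.
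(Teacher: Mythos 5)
Your proof is correct and follows the same architecture as the paper's: the equivalence of (a), (b), (e) via the moment--cumulant formula and Remark~\ref{rem:l2.1}, and the cycle through (c) and (d) using the machinery of Theorem~\ref{thm:rdiagEq}. Two points where you deviate are worth noting. First, in (a)$\Rightarrow$(c) you define the distribution of $p$ directly by its moments (odd moments zero, even moments prescribed by the right-hand side of~\eqref{eq:pevenmoms}), whereas the paper defines it by its cumulants (odd cumulants zero, $2k$-th cumulant $\beta^{(1)}_k$) and then must derive~\eqref{eq:pevenmoms} from the moment--cumulant formula together with $\beta^{(1)}_k=\beta^{(2)}_k$; your choice buys you~\eqref{eq:pevenmoms} for free, at the small cost of checking hermiticity of the abstract expectation, which you do. Second, in (d)$\Rightarrow$(a) you certify R-diagonality of $up$ by adjoining an auxiliary Haar unitary $u'$ and invoking condition (d) of Theorem~\ref{thm:rdiagEq} (which requires the additional, standard observation that $(u'u,p)$ and $(u,p)$ have the same joint $B$-valued $*$-distribution, since both pairs consist of a $B$-commuting Haar unitary free over $B$ from $p$); this works, but Lemma~\ref{lem:2.4} applies verbatim to the given $u$ and $p$ and gives the conclusion in one step, which is what the paper does. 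The remaining computations --- the collapse $\Ect(uwu^*)=\Ect(w)$ via centering and freeness, the matching of even alternating moments, and the appeal to Remark~\ref{rem:l2.1} --- coincide with the paper's.
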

\begin{proof}
The equivalence of~\ref{it:adista*}, \ref{it:Eaa*} and~\ref{it:beta12} is clear from the moment-cumulant formula and Remark~\ref{rem:l2.1}.
Indeed, using Remark~\ref{rem:a*Rdiag}, condition~\ref{it:beta12} says that $a$ and $a^*$ have the same $B$-valued cumulants,
since all other cumulants of $B$-valued R-diagonal elements must vanish.

We now show \ref{it:beta12}$\implies$\ref{it:up0}.
Endow $B\langle X\rangle$ with the $*$-operation that extends the given one on $B$ and satisfies $X^*=X$.
Let $\Theta:B\langle X\rangle\to B$ be the $B$-valued distribution of an element whose odd order cumulant maps
are all zero and whose $B$-valued cumulant map of order $2k$ is $\beta^{(1)}_k$, for all $k\ge1$;
indeed, $\Theta$ is constructed using the moment-cumulant formula.
Write $p$ for the element $X$ of $B\langle X\rangle$.
By Proposition~\ref{prop:alpha*} and the identity~\eqref{eq:betai*}, $\Theta$ is a self-adjoint map.
By the amalgamated free product construction, as in~\eqref{eq:amalgfp},
there is a $B$-valued $*$-noncommutative probability space $(\At,\Ect$)
containing elements $p,u\in\At$ satisfying conditions~\ref{it:psa}, \ref{it:uHU} and~\ref{it:upfree} of~\ref{it:up0}.
The moment-cumulant formula and the fact that $\beta^{(1)}_k=\beta^{(2)}_k$ for all $k$
imply that the identities~\eqref{eq:poddmoms} and~\eqref{eq:pevenmoms} hold;
in particular, condition~\ref{it:podds} of~\ref{it:up0} holds.
Let $\at=up\in\At$.
By Lemma~\ref{lem:2.3}, $\at$ is $B$-valued R-diagonal.
In order to prove that $a$ and $\at$ have the same $B$-valued $*$-distribution, using Remark~\ref{rem:l2.1}, it
will suffice to show that we always have
\begin{align}
\Ec(a^*b_1ab_2a^*b_3ab_4\cdots a^*b_{2k-1}a)&=\Ect(\at^*b_1\at b_2\at^*b_3\at b_4\cdots\at^*b_{2k-1}\at) \label{eq:balmoms1} \\
\Ec(ab_1a^*b_2ab_3a^*b_4\cdots ab_{2k-1}a^*)&=\Ect(\at b_1\at^*b_2\at b_3\at^*b_4\cdots\at b_{2k-1}\at^*). \label{eq:balmoms2}
\end{align}
To show~\eqref{eq:balmoms1}, we use
\begin{equation}\label{eq:at*bat}
\at^*b_1\at b_2\at^*b_3\at b_4\cdots\at^*b_{2k-1}\at=pb_1pb_2\cdots pb_{2k-1}p,
\end{equation}
and~\eqref{eq:pevenmoms}.
To show~\eqref{eq:balmoms2}, we use
\[
\at b_1\at^*b_2\at b_3\at^*b_4\cdots\at b_{2k-1}\at^*=u(pb_1pb_2\cdots pb_{2k-1}p)u^*.
\]
By freeness and the fact that $u$ commutes with every element of $B$, we get
\begin{equation}\label{eq:atbat*}
\Ect(\at b_1\at^*b_2\at b_3\at^*b_4\cdots\at b_{2k-1}\at^*)=\Ect(pb_1pb_2\cdots pb_{2k-1}p)
\end{equation}
so, by appeal to~\eqref{eq:pevenmoms} and the fact that $a$ and $a^*$ have the same $B$-valued $*$-distribution, we get~\eqref{eq:balmoms2}.
Thus, $a$ and $\at=up$ have the same $B$-valued $*$-distribution; namely,~\ref{it:up0} holds.

Clearly, \ref{it:up0}$\implies$\ref{it:up}.

Now we will show \ref{it:up} implies \ref{it:Eaa*} and~\eqref{eq:pevenmoms}.
Let $\at=up$.
That $\at$ is $B$-valued R-diagonal follows from Lemma~\ref{lem:2.4}.
But~\eqref{eq:at*bat} and~\eqref{eq:atbat*} hold by the same arguments as above.
Thus, we have~\eqref{eq:Eaba*}.
\end{proof}

Here is a more general version of Proposition~\ref{prop:HU}.

\begin{prop}\label{prop:HUtheta}
Let $a\in A$ and let $\theta$ be an automorphism of the $*$-algebra $B$.
Then the following are equivalent:
\begin{enumerate}[label=(\alph*),leftmargin=20pt]
\item\label{it:Eaa*theta} $a$ is $B$-valued R-diagonal and for all $k\ge1$ and all $b_1,\ldots,b_{2k-1}\in B$, we have
\begin{multline}\label{eq:Eaba*theta}
\Ec(a^*b_1a\theta(b_2)a^*b_3a\cdots\theta(b_{2k-2})a^*b_{2k-1}a) \\
=\theta\big(\Ec(a\theta(b_1)a^*b_2a\theta(b_3)a^*\cdots b_{2k-2}a\theta(b_{2k-1})a^*)\big).
\end{multline}
\item\label{it:beta12theta}  $a$ is $B$-valued R-diagonal and for all $k\ge1$, the $2k$-th order
$B$-valued cumulant maps satisfy, for all $b_1,\ldots,b_{2k-1}\in B$,
\begin{equation}\label{eq:beta12theta}
\beta^{(2)}_k(b_1,\theta(b_2),b_3,\ldots,\theta(b_{2k-2}),b_{2k-1}) \\
=\theta\big(\beta^{(1)}_k(\theta(b_1),b_2,\theta(b_3),\ldots,b_{2k-2},\theta(b_{2k-1}))\big).
\end{equation}
\item\label{it:up0theta} There exists a $B$-valued $*$-noncommutative probability space $(\At,\Ect)$
with elements $p,u\in\At$ such that 
\begin{enumerate}[label=(\roman*)]
\item\label{it:psatheta} $p=p^*$
\item\label{it:uHUtheta} $u$ is a Haar unitary that normalizes $B$ and satisfies $u^*bu=\theta(b)$ for all $b\in B$
\item\label{it:upfreetheta} $\{u,u^*\}$ and $\{p\}$ are free (over $B$) with respect to $\Ect$
\item\label{it:updistatheta} the element $up$ has the same $B$-valued $*$-distribution as $a$
\item\label{it:poddstheta} the odd moments of $p$ vanish, i.e., for every $k\ge0$ and all $b_1,\ldots,b_{2k}\in B$, we have
\begin{equation*}
\Ect(pb_1pb_2\cdots pb_{2k}p)=0.
\end{equation*}
\end{enumerate}
\item\label{it:uptheta} There exists a $B$-valued $*$-noncommutative probability space $(\At,\Ect)$
with elements $p,u\in\At$ such that parts \ref{it:psatheta}--\ref{it:updistatheta} of~\ref{it:up0theta} hold.
\end{enumerate}
In part~\ref{it:up0theta} or~\ref{it:uptheta},
the even moments of $p$ are given by,
for every $k\ge1$ and all $b_1,\ldots,b_{2k-1}\in B$,
\begin{equation}\label{eq:pevenmomstheta}
\Ect(pb_1pb_2\cdots pb_{2k-1}p)=\Ec(a^*\theta^{-1}(b_1)ab_2a^*\theta^{-1}(b_3)a\cdots b_{2k-2}a^*\theta^{-1}(b_{2k-1})a).
\end{equation}
\end{prop}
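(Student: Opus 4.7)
The plan is to run the same cycle of implications as in Proposition~\ref{prop:HU}, namely \ref{it:Eaa*theta} $\Longleftrightarrow$ \ref{it:beta12theta} $\Longrightarrow$ \ref{it:up0theta} $\Longrightarrow$ \ref{it:uptheta} $\Longrightarrow$ \ref{it:Eaa*theta}, while tracking the automorphism $\theta$ through every moment and cumulant expression.  For \ref{it:Eaa*theta} $\Longleftrightarrow$ \ref{it:beta12theta}, I would apply the moment--cumulant formula~\eqref{eq:alphacummom} to both sides of~\eqref{eq:Eaba*theta}: since $a$ is R-diagonal only the alternating cumulants $\beta_k^{(i)}$ contribute, and because $\theta$ is a $*$-algebra automorphism it distributes through products and commutes with the $\alphah$-construction of~\eqref{eq:cumrec}.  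An induction on word length then converts~\eqref{eq:Eaba*theta} into~\eqref{eq:beta12theta} and vice versa, with Remark~\ref{rem:l2.1} ensuring that only even alternating moments need be considered.

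For \ref{it:beta12theta} $\Longrightarrow$ \ref{it:up0theta} I would construct $p$ and $u$ separately and then combine by amalgamated free product.  Define a self-adjoint $B$-valued distribution $\Theta_p$ on $B\langle X\rangle$ by declaring all odd-order cumulant maps to vanish and setting the $2k$-th cumulant to
\[
\gamma_{2k}(b_1,\ldots,b_{2k-1}):=\theta\bigl(\beta_k^{(1)}(b_1,\theta^{-1}(b_2),b_3,\theta^{-1}(b_4),\ldots,b_{2k-1})\bigr);
\]
self-adjointness via Proposition~\ref{prop:alpha*} follows from~\eqref{eq:betai*} and the fact that $\theta$ is $*$-preserving.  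Realize $p=p^*$ in a $B$-valued $*$-noncommutative probability space, and realize $u$ in the twisted group $*$-algebra generated by $B$ and a unitary satisfying $u^*bu=\theta(b)$ (the algebraic crossed product $B\rtimes_\theta\Ints$), equipped with the conditional expectation sending $bu^n$ to $b$ if $n=0$ and to $0$ otherwise; this makes $u$ a Haar unitary normalizing $B$ with action $\theta$.  Taking the amalgamated free product over $B$ yields $(\At,\Ect)$ with $\{u,u^*\}$ free from $\{p\}$, so by Lemma~\ref{lem:2.4}, $\at:=up$ is $B$-valued R-diagonal.  A direct computation, sliding each $u^*$ past the neighboring $b_i$ via $u^*b_iu=\theta(b_i)$, gives
\[
\Ect(\at^*b_1\at b_2\cdots \at^*b_{2k-1}\at)=\Ect(p\theta(b_1)pb_2p\theta(b_3)\cdots p\theta(b_{2k-1})p),
\]
which by the choice of $\gamma_{2k}$ and moment--cumulant inversion matches the corresponding moment of $a$; for the other alternating moment, pushing all internal $u$'s outward leaves a word of the form $u(\cdots)u^*$, and freeness together with $\Ect(u)=0$ and $ubu^*=\theta^{-1}(b)$ extracts a factor $\theta^{-1}$ of an expectation, which after invoking~\eqref{eq:Eaba*theta} likewise matches the corresponding moment of $a$.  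Remark~\ref{rem:l2.1} then gives that $a$ and $\at$ have the same $B$-valued $*$-distribution.

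The step \ref{it:up0theta} $\Longrightarrow$ \ref{it:uptheta} is immediate, and for \ref{it:uptheta} $\Longrightarrow$ \ref{it:Eaa*theta} together with~\eqref{eq:pevenmomstheta} I would perform the same two moment computations for $\at=up$: the first yields~\eqref{eq:pevenmomstheta} after the variable change $b_i\mapsto\theta^{-1}(b_i)$ on odd indices, and comparing the two moments produces~\eqref{eq:Eaba*theta}, since the outer $\theta$ on the right-hand side of~\eqref{eq:Eaba*theta} cancels exactly the $\theta^{-1}$ produced by freeness.  The main obstacle is bookkeeping: $\theta$ and $\theta^{-1}$ must land in precisely the right positions for these cancellations to work, and the defining formula for $\gamma_{2k}$ must be chosen so that \emph{both} families $\beta_k^{(1)}$ and $\beta_k^{(2)}$ (not just one) are recovered as the alternating cumulants of $up$, which is where the hypothesis~\eqref{eq:beta12theta} is used essentially.
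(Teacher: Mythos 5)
Your proposal is correct and follows essentially the same route as the paper: the same cycle \ref{it:Eaa*theta}$\Leftrightarrow$\ref{it:beta12theta}$\Rightarrow$\ref{it:up0theta}$\Rightarrow$\ref{it:uptheta}$\Rightarrow$\ref{it:Eaa*theta}, the same definition of the cumulants of $p$ (your $\gamma_{2k}$ agrees with the paper's~\eqref{eq:gammakbeta}), the same realization of $u$ in $B\rtimes_\theta\Ints$ combined with $p$ by amalgamated free product, and the same two moment computations \eqref{eq:at*battheta}--\eqref{eq:atbat*theta}. The only substantive point you assert rather than prove is that the relation between $\gamma_{2k}$ and the $\beta_k^{(i)}$ propagates from single cumulants to the maps $\alphah_j(\pi)$ for all noncrossing $\pi$ (the paper's~\eqref{eq:gammaalpha} and Lemma~\ref{lem:alphahHU}, each an induction on the number of blocks using~\eqref{eq:cumrec}); this is exactly the bookkeeping you flag, and it does go through.
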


In the proof, we will use the following lemma.
\begin{lemma}\label{lem:alphahHU}
Suppose $\theta$ is an automorphism of a $*$-algebra $B$ and $a$ is $B$-valued R-diagonal.
Fix $n\ge1$ and suppose that for all $k\in\{1,\ldots,n-1\}$ and all $b_1,\ldots,b_{2k-1}$ the equality~\eqref{eq:beta12theta} holds.
Then for all $\pi\in\NC(2n)\backslash\{1_{2n}\}$ and all $b_1,\ldots,b_{2n-1}\in B$, with $\alphah$  from the cumulant maps
for the pair $(a_1,a_2)=(a,a^*)$, we have
\begin{multline}\label{eq:alphah12theta}
\alphah_{(2,1,\ldots,2,1)}(\pi)[b_1,\theta(b_2),b_3,\ldots,\theta(b_{2n-2}),b_{2n-1}] \\
=\theta\big(\alphah_{(1,2,\ldots,1,2)}(\pi)[\theta(b_1),b_2,\theta(b_3),\ldots,b_{2n-2},\theta(b_{2n-1})]\big).
\end{multline}
\end{lemma}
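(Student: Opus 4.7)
My plan is to prove the lemma by induction on the number of blocks of $\pi\in\NC(2n)\setminus\{1_{2n}\}$. For the inductive step, I fix such a $\pi$ and select an interval block $\{p,p+1,\ldots,p+q-1\}\in\pi$ as in the recursion~\eqref{eq:cumrec}. I then apply~\eqref{eq:cumrec} to each of the two sides of~\eqref{eq:alphah12theta} and compare term by term; the two factors produced by the recursion, namely $\alpha_{j''}$ of order $q$ and $\alphah_{j'}(\pi')$ of order $2n-q$, will be handled respectively by the hypothesis~\eqref{eq:beta12theta} at a lower level $m$, and by either the hypothesis~\eqref{eq:beta12theta} at level $n-m$ (when $\pi'=1_{2n-q}$) or the inductive hypothesis of the present lemma (when $\pi'$ has strictly fewer blocks than $\pi$).

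I would first dispose of the case when $q$ is odd: in that case $j''$ is alternating of odd length, so $\alpha_{j''}=0$ by R-diagonality and condition~\ref{it:cum} of Theorem~\ref{thm:rdiagEq}, forcing both sides of~\eqref{eq:alphah12theta} to vanish. When $q=2m$ with $1\le m\le n-1$, both $j''$ and $j'$ are alternating of even length. I would use~\eqref{eq:beta12theta} at level $m$ to rewrite the $\alpha_{j''}$ factor appearing on the left of~\eqref{eq:alphah12theta} as $\theta$ applied to the corresponding cumulant of the opposite type appearing on the right (the relation is invertible because $\theta$ is an automorphism). Since $q$ is even, the outer arguments neighboring the block, sitting at positions $p-1$ and $p+q-1$, have the same parity; so on the left side they are either both plain $b$'s or both $\theta$-wrapped, and on the right side they are the opposite. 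Consequently, the homomorphism property lets me pull $\theta$ out of the whole middle piece $(\mathrm{arg})_{p-1}\,\alpha_{j''}(\ldots)\,(\mathrm{arg})_{p+q-1}$ that gets plugged into $\alphah_{j'}(\pi')$, identifying the middle piece on the left with $\theta$ of the middle piece on the right. Again using that $q$ is even, the remaining outer arguments retain the alternating $b_k$-versus-$\theta(b_k)$ pattern, relative to the new indexing on $\pi'$, that~\eqref{eq:alphah12theta} at level $n-m$ requires. A final application of~\eqref{eq:beta12theta} at level $n-m$ (when $\pi'=1_{2n-q}$, so $\alphah_{j'}(\pi')=\alpha_{j'}$) or of the inductive hypothesis (when $\pi'$ has fewer blocks than $\pi$) then closes the step. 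The cases $p=1$ and $p+q-1=2n$ of the recursion~\eqref{eq:cumrec} are handled in exactly the same way.

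The main obstacle is the bookkeeping of the $\theta$-twists through the recursion: one must carefully verify that, after substituting the inner-block cumulant via~\eqref{eq:beta12theta}, the $\theta$-status of the resulting ``plugged-in'' argument to $\alphah_{j'}(\pi')$ matches the $\theta$-status its position calls for in the inductive hypothesis. This comes down to a parity check on $p$ using that $q$ is even, together with the homomorphism property of $\theta$ used to absorb the outer $\theta$'s on $b_{p-1}$ and $b_{p+q-1}$ into the middle piece.
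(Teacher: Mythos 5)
Your proposal is correct and follows essentially the same route as the paper: induction on the number of blocks of $\pi$, applying the recursion~\eqref{eq:cumrec} to a selected interval block, using R-diagonality to kill the odd-length case, invoking~\eqref{eq:beta12theta} for the inner cumulant and the inductive hypothesis (or~\eqref{eq:beta12theta} again when $\pi'=1_{2n-q}$) for the outer one, with the same parity bookkeeping on the position of the block and the homomorphism property of $\theta$ to absorb the twists. The paper merely organizes this by writing out the case $|\pi|=2$ in full (distinguishing the two parities of the starting position of the internal interval) and noting that the general step is analogous.
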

\begin{proof}
This follows by straightforward computation using the recursive formula~\eqref{eq:cumrec} for the maps $\alphah_j(\pi)$
by induction on $|\pi|$, namely,
on the number of blocks in $\pi$.
Suppose $|\pi|=2$.
Then both sides of~\eqref{eq:alphah12theta} vanish unless both blocks are of even length.
If both blocks are intervals, then $\pi=\{\{1,\ldots,2p\},\{2p+1,\ldots,2n\}\}$ for some $1\le p<n$ and using~\eqref{eq:beta12theta}
we have
\begin{align*}
\alphah_{(2,1,\ldots,2,1)}(\pi)&[b_1,\theta(b_2),b_3,\ldots,\theta(b_{2n-2}),b_{2n-1}] \\
&=\begin{aligned}[t]
 &\alpha_{(2,1,\ldots,2,1)}(b_1,\theta(b_2),b_3,\ldots,\theta(b_{2p-2}),b_{2p-1})\theta(b_{2p}) \\
 &\quad\cdot\alpha_{(2,1,\ldots,2,1)}(b_{2p+1},\theta(b_{2p+2}),b_{2p+3},\ldots,\theta(b_{2n-2}),b_{2n-1})
\end{aligned} \\
&=\begin{aligned}[t]
 &\theta\Big(\alpha_{(1,2,\ldots,1,2)}(\theta(b_1),b_2,\theta(b_3),\ldots,b_{2p-2},\theta(b_{2p-1}))b_{2p} \\
 &\quad\cdot\alpha_{(1,2,\ldots,1,2)}(\theta(b_{2p+1}),b_{2p+2},\theta(b_{2p+3}),\ldots,b_{2n-2},\theta(b_{2n-1}))\Big)
\end{aligned} \\
&=\theta\big(\alphah_{(1,2,\ldots,1,2)}(\pi)[\theta(b_1),b_2,\theta(b_3),\ldots,b_{2n-2},\theta(b_{2n-1})]\big).
\end{align*}
If $|\pi|=2$ and $\pi$ has an internal interval of the form
$\{2p+1,2p+2,\ldots,2p+2q\}$ for some $1\le p<p+q<n$,
then again using~\eqref{eq:beta12theta} we have
\begin{align}
\alphah_{(2,1,\ldots,2,1)}&(\pi)\big[b_1,\theta(b_2),b_3,\ldots,\theta(b_{2n-2}),b_{2n-1}\big] \label{eq:ii1-1} \displaybreak[2]\\
&=\begin{aligned}[t]
 \alpha_{(2,1,\ldots,2,1)}\Big(&b_1,\theta(b_2),b_3,\ldots,\theta(b_{2p-2}),b_{2p-1}, \\
 &\;\theta(b_{2p})\alpha_{(2,1,\ldots,2,1)}\big(b_{2p+1},\theta(b_{2p+2}),\ldots,b_{2p+2q-1}\big)\theta(b_{2p+2q}), \\
 &\; b_{2p+2q+1},\theta(b_{2p+2p+2}),b_{2p+2q+3},\ldots,\theta(b_{2n-2}),b_{2n-1}\Big)
 \end{aligned} \label{eq:ii1-2} \displaybreak[2] \\
&=\begin{aligned}[t]
 \alpha_{(2,1,\ldots,2,1)}\Big(&b_1,\theta(b_2),b_3,\ldots,\theta(b_{2p-2}),b_{2p-1}, \\
 &\;\theta\Big(b_{2p}\alpha_{(1,2,\ldots,1,2)}\big(\theta(b_{2p+1}),b_{2p+2},\ldots,\theta(b_{2p+2q-1})\big)b_{2p+2q}\Big), \\
 &\; b_{2p+2q+1},\theta(b_{2p+2p+2}),b_{2p+2q+3},\ldots,\theta(b_{2n-2}),b_{2n-1}\Big)
 \end{aligned} \label{eq:ii1-3} \displaybreak[2] \\
&=\begin{aligned}[t]
 \theta\Big(\alpha_{(1,2,\ldots,1,2)}\Big(&\theta(b_1),b_2,\theta(b_3),\ldots,b_{2p-2},\theta(b_{2p-1}), \\
 &\; b_{2p}\alpha_{(1,2,\ldots,1,2)}\big(\theta(b_{2p+1}),b_{2p+2},\ldots,\theta(b_{2p+2q-1})\big)b_{2p+2q}, \\
 &\; \theta(b_{2p+2q+1}),b_{2p+2p+2},\theta(b_{2p+2q+3}),\ldots,b_{2n-2},\theta(b_{2n-1})\Big)\Big)
 \end{aligned} \label{eq:ii1-4} \\
&=\theta\Big(\alphah_{(1,2,\ldots,1,2)}(\pi)\big[\theta(b_1),b_2,\theta(b_3),\ldots,b_{2n-2},\theta(b_{2n-1})\big]\Big). \label{eq:ii1-5}
\end{align}
If $|\pi|=2$ and $\pi$ has an internal interval of the form
$\{2p,2p+1,\ldots,2p+2q-1\}$ for some $1\le p<p+q\le n$,
then again using~\eqref{eq:beta12theta} we have
\begin{align}
&\alphah_{(2,1,\ldots,2,1)}(\pi)\big[b_1,\theta(b_2),b_3,\ldots,\theta(b_{2n-2}),b_{2n-1}\big] \label{eq:ii2-1} \displaybreak[2] \\
&\quad=\begin{aligned}[t]
 \alpha_{(2,1,\ldots,2,1)}\Big(&b_1,\theta(b_2),\ldots,b_{2p-3},\theta(b_{2p-2}), \\
 &\; b_{2p-1}\alpha_{(1,2,\ldots,1,2)}\big(\theta(b_{2p}),b_{2p+1},\ldots,\theta(b_{2p+2q-2})\big)b_{2p+2q-1}, \\
 &\; \theta(b_{2p+2q}),b_{2p+2p+1},\ldots,\theta(b_{2n-2}),b_{2n-1}\Big)
 \end{aligned} \label{eq:ii2-2} \displaybreak[2] \\
&\quad=\begin{aligned}[t]
 \alpha_{(2,1,\ldots,2,1)}\Big(&b_1,\theta(b_2),\ldots,b_{2p-3},\theta(b_{2p-2}), \\
 &\; \theta^{-1}\Big(\theta(b_{2p-1})\alpha_{(2,1,\ldots,2,1)}\big(b_{2p},\theta(b_{2p+1}),\ldots,b_{2p+2q-2}\big)\theta(b_{2p+2q-1})\Big), \\
 &\; \theta(b_{2p+2q}),b_{2p+2p+1},\ldots,\theta(b_{2n-2}),b_{2n-1}\Big)
 \end{aligned} \label{eq:ii2-3} \displaybreak[2] \\
&\quad=\begin{aligned}[t]
 \theta\Big(\alpha_{(1,2,\ldots,1,2)}\Big(&\theta(b_1),b_2,\ldots,\theta(b_{2p-3}),b_{2p-2}, \\
 &\; \theta(b_{2p-1})\alpha_{(2,1,\ldots,2,1)}\big(b_{2p},\theta(b_{2p+1}),\ldots,b_{2p+2q-2}\big)\theta(b_{2p+2q-1}), \\
 &\; b_{2p+2q},\theta(b_{2p+2p+1}),\ldots,b_{2n-2},\theta(b_{2n-1})\Big)\Big)
 \end{aligned} \label{eq:ii2-4} \\
&\quad=\theta\Big(\alphah_{(1,2,\ldots,1,2)}(\pi)\big[\theta(b_1),b_2,\theta(b_3),\ldots,b_{2n-2},\theta(b_{2n-1})\big]\Big). \label{eq:ii2-5}
\end{align}
This finishes the proof of the case $|\pi|=2$.

The induction step when $|\pi|>2$ is very similar.
We see that both sides of~\eqref{eq:alphah12theta} vanish unless $\pi$ has an internal interval of even length,
and then, using the induction hypothesis, one gets recursive formulas like in \eqref{eq:ii1-1}--\eqref{eq:ii2-5},
except that in \eqref{eq:ii1-2}-\eqref{eq:ii1-4} and \eqref{eq:ii2-2}-\eqref{eq:ii2-4}, each $\alpha_{(1,2,\ldots,1,2)}(\cdots)$
appearing immediately after an equality sign
is replaced by $\alpha_{(1,2,\ldots,1,2)}(\pi')[\cdots]$ and each $\alpha_{(2,1,\ldots,2,1)}(\cdots)$
is replaced by $\alpha_{(2,1,\ldots,2,1)}(\pi')[\cdots]$,
where $\pi'$ is obtained from $\pi$ by removing the corresponding internal interval and renumbering.
\end{proof}

\begin{proof}[Proof of Proposition~\ref{prop:HUtheta}.]
The proof is patterned after the proof of Proposition~\ref{prop:HU}.
The equivalence \ref{it:Eaa*theta}$\Longleftrightarrow$\ref{it:beta12theta} follows easily from
the moment-cumulant formula~\eqref{eq:alphacummom}
and Lemma \ref{lem:alphahHU}, by induction on $k$, keeping in mind that all of the cumulant maps vanish
except  the $2k$-th order ones $\beta^{(1)}_k=\alpha_{(1,2,\ldots,1,2)}$ and $\beta^{(2)}_k=\alpha_{(2,1,\ldots,2,1)}$, ($k\ge1$).

We now show \ref{it:beta12theta}$\implies$\ref{it:up0theta}.
Endow $B\langle X\rangle$ with the $*$-operation that extends the given one on $B$ and satisfies $X^*=X$.
Let $\Theta:B\langle X\rangle\to B$ be the $B$-valued distribution of an element whose odd order $B$-valued cumulant maps
are all zero and whose $B$-valued cumulant map of order $2k$ is the map $\gamma_k$ given by
\begin{multline}\label{eq:gammakbeta}
\gamma_k(b_1,\ldots,b_{2k-1})=\beta^{(2)}_k\big(\theta^{-1}(b_1),b_2,\theta^{-1}(b_3),\ldots,b_{2k-2},\theta^{-1}(b_{2k-1})\big) \\
=\theta\circ\beta^{(1)}_k\big(b_1,\theta^{-1}(b_2),b_3,\ldots,\theta^{-1}(b_{2k-2}),b_{2k-1}),
\end{multline}
for all $k\ge1$, where the second equality above is from~\eqref{eq:beta12theta}.
Write $p$ for the element $X$ of $B\langle X\rangle$.
By Proposition~\ref{prop:alpha*} and the identity~\eqref{eq:betai*}, $\Theta$ is a self-adjoint map.
We claim (letting $\gamma_k$ be shorthand for $\gamma_{(1,1,\ldots,1)}$ with $1$ repeated $2k$ times
and similarly letting $\gammah_k$ be shorthand for $\gammah_{(1,1,\ldots,1)}$), that for every $k\ge1$,
every $\pi\in\NC(2k)$ and every $b_1,\ldots,b_{2k-1}\in B$, we have
\begin{equation}\label{eq:gammaalpha}
\gammah_k(\pi)[b_1,\ldots,b_{2k-1}]
=\alphah_{(2,1,\ldots,2,1)}(\pi)\big[\theta^{-1}(b_1),b_2,\theta^{-1}(b_3),\ldots,b_{2k-2},\theta^{-1}(b_{2k-1})\big].
\end{equation}
If $\pi=1_{2k}$ then this holds by definition, and the general case is proved by induction on the number $|\pi|$ of blocks in $\pi$.
For the induction step, if $\pi$ has an internal interval block of the form $V=\{2r,2r+1,\ldots,2r+2s-1\}$, for some integers
$1\le r<r+s\le k$,
then letting $\pi'\in\NC(2k-2s)$ be obtained from $\pi$ by removing the block $V$ and renumbering, we have
\begin{align*}
\gammah_k&(\pi)[b_1,\ldots,b_{2k-1}] \\
&\;=\gammah_{k-s}(\pi')\big[b_1,\ldots,b_{2r-2},b_{2r-1}\gamma_s(b_{2r},\ldots,b_{2r+2s-2})b_{2r+2s-1},b_{2r+2s},\ldots,b_{2k-1}\big] \\
&\;=\begin{aligned}[t]
 \alphah_{(2,1,\ldots,2,1)}(\pi')\Big[&\theta^{-1}(b_1),b_2,\ldots,\theta^{-1}(b_{2r-3}),b_{2r-2}, \\
 &\theta^{-1}\Big(b_{2r-1}\alpha_{(2,1,\ldots,2,1)}\big(\theta^{-1}(b_{2r}),b_{2r+1},\ldots,\theta^{-1}(b_{2r+2s-2})\big)b_{2r+2s-1}\Big), \\
 &b_{2r+2s},\theta^{-1}(b_{2r+2s+1}),\ldots,b_{2k-2},\theta^{-1}(b_{2k-1})\Big]
\end{aligned} \\
&\;=\begin{aligned}[t]
 \alphah_{(2,1,\ldots,2,1)}(\pi')\Big[&\theta^{-1}(b_1),b_2,\ldots,\theta^{-1}(b_{2r-3}),b_{2r-2}, \\
 &\theta^{-1}(b_{2r-1})\alpha_{(1,2,\ldots,1,2)}\big(b_{2r},\theta^{-1}(b_{2r+1}),\ldots,b_{2r+2s-2}\big)\theta^{-1}(b_{2r+2s-1}), \\
 &b_{2r+2s},\theta^{-1}(b_{2r+2s+1}),\ldots,b_{2k-2},\theta^{-1}(b_{2k-1})\Big]
\end{aligned} \\
&\;=\alphah_{(2,1,\ldots,2,1)}(\pi)\big[\theta^{-1}(b_1),b_2,\theta^{-1}(b_3),\ldots,b_{2k-2},\theta^{-1}(b_{2k-1})\big],
\end{align*}
where the first and last equalities are by the recursive formula for $\gammah$ (see~\eqref{eq:cumrec})
the second equality is by the induction hypothesis and~\eqref{eq:gammakbeta}, and the third equality is from~\eqref{eq:beta12theta}.
The cases when $\pi$ has an internal interval block of the form $\{2r+1,\ldots,2r+2s\}$ for some integers $1\le r<r+s<k$
or when $\pi$ has only two blocks, both of them intervals,
are treated similarly, to prove~\eqref{eq:gammaalpha}.
From the equality~\eqref{eq:gammaalpha}, the fact that all odd $B$-valued moments vanish
and the moment-cumulant formula, we deduce the equality~\eqref{eq:pevenmomstheta} for all $k\ge1$ and all
$b_1,\ldots,b_{2k-1}$.

Using a crossed product construction and the amalgamated free product construction,
there is a $B$-valued $*$-noncommutative probability space $(\At,\Ect$)
containing elements $p,u\in\At$ satisfying conditions~\ref{it:psatheta}, \ref{it:uHUtheta} and~\ref{it:upfreetheta} of~\ref{it:up0theta}.
(This should be reasonably clear, but see the proof of Lemma~\ref{lem:C*}, below for more details.)
By construction,~\ref{it:poddstheta} holds.
Let $\at=up\in\At$.
By Lemma~\ref{lem:2.3}, $\at$ is $B$-valued R-diagonal.
In order to prove that $a$ and $\at$ have the same $B$-valued $*$-distribution, using Remark~\ref{rem:l2.1}, it
will suffice to show that the equations~\eqref{eq:balmoms1} and~\eqref{eq:balmoms2} always hold.
To show~\eqref{eq:balmoms1}, we use
\begin{equation}\label{eq:at*battheta}
\at^*b_1\at b_2\at^*b_3\at b_4\cdots\at^*b_{2k-1}\at=p\theta(b_1)pb_2p\theta(b_3)p\cdots\cdots b_{2k-2}p\theta(b_{2k-1})p,
\end{equation}
and~\eqref{eq:pevenmomstheta}.
To show~\eqref{eq:balmoms2}, we use
\begin{equation}\label{eq:atbat*theta}
\at b_1\at^*b_2\at b_3\at^*b_4\cdots\at b_{2k-1}\at^*=u(pb_1p\theta(b_2)pb_3p\cdots \theta(b_{2k-2})pb_{2k-1}p)u^*.
\end{equation}
Thus, we get
\begin{align*}
\Ect(\at b_1\at^*b_2\at b_3\at^*b_4&\cdots\at b_{2k-1}\at^*)=\Ect(u\Ect(pb_1p\theta(b_2)pb_3p\cdots \theta(b_{2k-2})pb_{2k-1}p)u^*) \\
&=\theta^{-1}\big(\Ect(pb_1p\theta(b_2)pb_3p\cdots \theta(b_{2k-2})pb_{2k-1}p)\big) \\
&=\theta^{-1}\big(\Ec(a^*\theta^{-1}(b_1)a\theta(b_2)a^*\theta^{-1}(b_3)a\cdots\theta(b_{2k-2})a^*\theta^{-1}(b_{2k-1})a)\big) \\
&=\Ec(ab_1a^*b_2ab_3a^*\cdots b_{2k-2}ab_{2k-1}a^*)
\end{align*}
where the first equality is due to freeness,  the second equality is by~\ref{it:uHUtheta} of~\ref{it:up0theta},
the third equality is from~\eqref{eq:pevenmomstheta}
and the fourth equality is~\eqref{eq:Eaba*theta}.
This proves we get~\eqref{eq:balmoms2}.
Thus, $a$ and $\at=up$ have the same $*$-distribution; namely,~\ref{it:up0theta} holds.

The implication \ref{it:up0theta}$\implies$\ref{it:uptheta} is trivially true.
Assuming~\ref{it:uptheta}, the equality~\eqref{eq:pevenmomstheta} follows by writing
\begin{multline*}
\Ec(a^*\theta^{-1}(b_1)ab_2a^*\theta^{-1}(b_3)a\cdots b_{2k-2}a^*\theta^{-1}(b_{2k-1})a) \\
=\Ect(pu^*\theta^{-1}(b_1)upb_2pu^*\theta^{-1}(b_3)up\cdots b_{2k-2}pu^*\theta^{-1}(b_{2k-1})up) \\
=\Ect(pb_1pb_2\cdots pb_{2k-1}p).
\end{multline*}

We now show \ref{it:uptheta}$\implies$\ref{it:Eaa*theta}.
That $a$ is $B$-valued R-diagonal follows from Lemma~\ref{lem:2.4}.
To show~\eqref{eq:Eaba*theta}, let $\at=up$.
By hypothesis, $a$ and $\at$ have the same $B$-valued $*$-distribution.
Of course, the equalities~\eqref{eq:at*battheta} and~\eqref{eq:atbat*theta} hold.
Therefore, we have
\begin{align*}
\Ec(a^*b_1a\theta(b_2)a^*b_3a\cdots\theta&(b_{2k-2})a^*b_{2k-1}a) 
=\Ect(\at^*b_1\at\theta(b_2)\at^*b_3\at\cdots\theta(b_{2k-2})\at^*b_{2k-1}\at) \\
&=\Ect\big(p\theta(b_1)p\theta(b_2)p\theta(b_3)p\cdots \theta(b_{2k-2})p\theta(b_{2k-1})p\big)  \displaybreak[2]  \\
&=\theta\big(\Ect\big(u\big(p\theta(b_1)p\theta(b_2)p\theta(b_3)p\cdots \theta(b_{2k-2})p\theta(b_{2k-1})p\big)u^*\big)\big) 
 \displaybreak[2]  \\
&=\theta\big(\Ect\big(\at\theta(b_1)\at^*b_2\at\theta(b_3)p\cdots b_{2k-2}\at\theta(b_{2k-1})\at^*\big)\big) \\
&=\theta\big(\Ec\big(a\theta(b_1)a^*b_2a\theta(b_3)p\cdots b_{2k-2}a\theta(b_{2k-1})a^*\big)\big).
\end{align*}
where the second equality is~\eqref{eq:at*battheta}, the third is by conditions~\ref{it:uHUtheta}
and~\ref{it:upfreetheta} of~\ref{it:up0theta}
and the fourth is~\eqref{eq:atbat*theta}.
\end{proof}

\begin{lemma}\label{lem:C*}
In Proposition~\ref{prop:HUtheta}, if $B$ is a C$^*$-algebra and if
$(A,\Ec)$ is a $B$-valued C$^*$-noncommutative probability space, then
in part~\ref{it:up0theta}, $(\At,\Ect)$ can be realized as a $B$-valued C$^*$-noncommutative probability space.
\end{lemma}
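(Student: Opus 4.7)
The plan is to realise the construction of Proposition~\ref{prop:HUtheta}~\ref{it:beta12theta}$\Longrightarrow$\ref{it:up0theta} entirely within the category of $B$-valued C$^*$-noncommutative probability spaces, by producing three C$^*$-algebraic building blocks: a $(D,\Ec_D)$ containing $B$ and a self-adjoint element $p$ whose $B$-valued $*$-distribution is the $\Theta$ constructed (from the cumulants $\gamma_k$) in the proof of Proposition~\ref{prop:HUtheta}; a $(E,\Ec_E)$ containing $B$ and a Haar unitary $u$ with $u^*bu=\theta(b)$; and the reduced amalgamated free product $(\At,\Ect)=(D,\Ec_D)*_B(E,\Ec_E)$. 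For the second ingredient I will take the reduced C$^*$-crossed product $E=B\rtimes_\theta\Ints$ with its standard conditional expectation onto $B$, and for the third I will use Voiculescu's reduced amalgamated free product of $B$-valued C$^*$-noncommutative probability spaces. These two are standard. The main obstacle is the first ingredient, which reduces to establishing positivity of $\Theta$.

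To prove that $\Theta(q^*q)\ge0$ in $B$ for every $q\in B\langle X\rangle$, I will use a $2\times2$ matrix trick. Define a unital $*$-homomorphism $\Phi:B\langle X\rangle\to M_2(A)$ by
\[
\Phi(b)=\left(\begin{matrix}\theta^{-1}(b)&0\\0&b\end{matrix}\right),\;b\in B,\qquad
\Phi(X)=\left(\begin{matrix}0&a\\a^*&0\end{matrix}\right).
\]
Since $\theta$ is a $*$-automorphism of $B$, the restriction $\Phi|_B$ is a unital $*$-homomorphism, and $\Phi(X)$ is self-adjoint, so $\Phi$ is well-defined on the free unital $*$-algebra $B\langle X\rangle$. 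A direct induction on degree shows that for a monomial $q=b_0Xb_1X\cdots Xb_n\in B\langle X\rangle$, the $(2,2)$-entry of $\Phi(q)$ vanishes if $n$ is odd, and equals
\[
b_0\,a^*\theta^{-1}(b_1)\,a\,b_2\,a^*\theta^{-1}(b_3)\,a\cdots a^*\theta^{-1}(b_{n-1})\,a\,b_n
\]
if $n=2k$ is even. Comparison with~\eqref{eq:pevenmomstheta} yields the key identity $\Theta(q)=\Ec(\Phi(q)_{22})$ for every $q\in B\langle X\rangle$.

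Positivity of $\Theta$ is then immediate: $\Phi(q)^*\Phi(q)$ is a positive element of the C$^*$-algebra $M_2(A)$, so its $(2,2)$-corner is positive in $A$, and the C$^*$-conditional expectation $\Ec:A\to B$ is positive, hence $\Theta(q^*q)=\Ec((\Phi(q)^*\Phi(q))_{22})\ge0$. The analogous computation using $\Phi(X)^2=\mathrm{diag}(aa^*,a^*a)\le\|a\|^2\,I$ in $M_2(A)$ yields the Cauchy--Schwarz type bound $\Theta((Xq)^*(Xq))\le\|a\|^2\,\Theta(q^*q)$, which ensures boundedness of left multiplication by $X$ on the GNS module. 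The standard $B$-valued GNS construction applied to $\Theta$ then produces a C$^*$-Hilbert $B$-module carrying a $*$-representation of $B\langle X\rangle$ by bounded operators, with $X$ going to a self-adjoint element $p$ of norm at most $\|a\|$; taking $D$ to be the C$^*$-algebra generated by $B$ and $p$ inside $\Lc(H_\Theta)$ and $\Ec_D$ to be the vector conditional expectation induced by the class of $1\in B\langle X\rangle$, we obtain $(D,\Ec_D)$ with $\Ec_D(q(p))=\Theta(q(X))$. Forming $(\At,\Ect)=(D,\Ec_D)*_B(E,\Ec_E)$ then houses $p$ and $u$, and verification of conditions~\ref{it:psatheta}--\ref{it:poddstheta} of~\ref{it:up0theta} is identical to that in the algebraic proof of Proposition~\ref{prop:HUtheta}. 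The main obstacle, as stated, is the positivity of $\Theta$, which the $M_2$-trick reduces to positivity of $\Ec$ in the ambient C$^*$-algebra $A$.
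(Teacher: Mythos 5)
Your proof is correct, and it reaches the same destination as the paper's by a recognizably different device. The paper replaces $(B\langle X\rangle,\Theta)$ by an explicit construction: it forms the Hilbert $B$-module $E_0\oplus E_1$ inside $L^2(B\langle X_1,X_2\rangle,\Thetat)$, lets $B$ act as $\pi(b)$ on $E_0$ and as $\pi(\theta^{-1}(b))$ on $E_1$, and defines the self-adjoint operator $Y$ swapping the two summands via $\pi(X_1)$ and $\pi(X_2)$; positivity and boundedness of the resulting distribution are then automatic because the operator is exhibited concretely, and the moment identity is checked against \eqref{eq:pevenmomstheta}. Your $M_2(A)$ trick is essentially the ``matrix picture'' of that same construction --- your $\Phi(X)=\left(\begin{smallmatrix}0&a\\a^*&0\end{smallmatrix}\right)$ and $\Phi(b)=\mathrm{diag}(\theta^{-1}(b),b)$ correspond to the paper's $Y$ and $\sigma(b)$, with the $(2,2)$-corner playing the role of $E_0$ --- but you extract from it only the two abstract facts you need, namely positivity of $\Theta$ via $\Theta(q^*q)=\Ec\big((\Phi(q)^*\Phi(q))_{22}\big)\ge0$ and the bound $\Theta((Xq)^*(Xq))\le\|a\|^2\,\Theta(q^*q)$, and then let the abstract GNS construction for Hilbert $B$-modules produce $(D,\Ec_D)$. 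What your route buys is a cleaner conceptual statement: positivity of $\Theta$ is reduced in one line to positivity of $\Ec$ and of corners of positive elements of $M_2(A)$, with no need to verify orthogonality of $E_0$ and $E_1$ or self-adjointness of $Y$ by hand. What the paper's route buys is that the representation is already concrete, so no separate boundedness estimate or appeal to GNS is required. The remaining ingredients (the crossed product realizing $u$, the reduced amalgamated free product, and the transfer of the algebraic verification of conditions \ref{it:psatheta}--\ref{it:poddstheta}) are handled identically in both arguments. Two small points are worth making explicit in your write-up: that $\theta$, being a $*$-automorphism of a C$^*$-algebra, is isometric, which is what makes left multiplication by $b\in B$ bounded on the GNS module; and that $\hat B$ is orthogonally complemented in the GNS module because $e\mapsto\hat1\langle\hat1,e\rangle$ is an adjointable idempotent, which is what makes $\Ec_D$ a genuine conditional expectation onto $B$.
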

\begin{proof}
In the proof of Proposition~\ref{prop:HUtheta},
the $*$-noncommutative probability space $(\At,\Ect)$ was constructed as an algebraic amalgamated free product
\begin{equation}\label{eq:algamalgfp}
(\At,\Ect)=(B\rtimes_\theta^\alg\Ints,\Fc^\alg)*_B(B\langle X\rangle,\Theta).
\end{equation}
Here $B\rtimes_\theta^\alg\Ints$ is an algebraic crossed product of $B$ by the automorphism $\theta$ and $\Fc^\alg$ is
the canonical conditional expectation, and the unitary $u$ of Proposition~\ref{prop:HUtheta}\ref{it:up0theta}
is from the element $1\in\Ints$;
recall, $\Theta$ is defined abstractly by specifying cumulants, and is the $*$-distribution of the element we called $p$.
If $B$ is a C$^*$-algebra, then we can take, instead, the crossed product C$^*$-algebra 
$B\rtimes_\theta\Ints$ and the canonical conditional expectation $\Fc:B\rtimes_\theta\Ints\to B$.

We also want a $B$-valued C$^*$-noncommutative probability space to replace $(B\langle X\rangle,\Theta)$.
Endow $B\langle X_1,X_2\rangle$ with the $*$-operation determined by that of $B$ and by setting $X_1^*=X_2$
and let $\Thetat:B\langle X_1,X_2\rangle\to B$ denote the $*$-distribution of the pair $(a,a^*)$.
Since $(A,\Ec)$ is a $B$-valued C$^*$-noncommutative probability space,
we have (see, for example, \cite{La95}), the Hilbert $B$-module $E=L^2(A,\Ec)$, on which $A$ acts by left multiplication
as bounded, adjointable operators.
We may without loss of generality assume $A$ is generated as a C$^*$-algebra by $B\cup\{a\}$.
It will be convenient to identify $E$ with $L^2(B\langle X_1,X_2\rangle,\Thetat)$, which is obtained 
by separation and completion, after endowing $B\langle X_1,X_2\rangle$ with the inner product 
$\langle y_1,y_2\rangle=\Thetat(y_1^*y_2)$,
on which $B\langle X_1,X_2\rangle$ acts by left multiplication as bounded, adjointable
operators on $E$.
Let $\pi$ denote this action.
Letting $y\mapsto\yh$ denote the defining mapping $B\langle X_1,X_2\rangle\to E$, the set $\{\bh\mid b\in B\}$
is a complemented subspace of $E$ isomorphic to the Hilbert $B$-module $B$,
with a self-adjoint projection $P:E\to B$.
We have the conditional expectation $\Lc(E)\to B$ given by $Z\mapsto PZ\oneh$,
and we have $\Thetat(y)=P\pi(y)\oneh$ for all $y\in B\langle X_1,X_2\rangle$.

Consider the closed right $B$-submodules of $E$,
\begin{align*}
E_0&=\clspan\{b_0X_2b_1X_1b_2X_2b_3X_1\cdots b_{n-2}X_2b_{n-1}X_1b_n\mid n\text{ even},\;b_0,\ldots,b_n\in B\}, \\
E_1&=\clspan\{b_0X_1b_1X_2b_2X_1b_3X_2\cdots b_{n-2}X_1b_{n-2}X_2b_{n-1}X_1b_n\mid n\text{ odd},\;b_0,\ldots,b_n\in B\}.
\end{align*}
Then $B$ (the image of $P$) is a submodule of $E_0$,
the spaces $E_0$ and $E_1$ are orthogonal to each other
with respect to the $B$-valued inner product
and their sum $E_0+E_1$, which we write $E_0\oplus E_1$,
is also a closed submodule of $E$.
We have the representation $\sigma:B\to\Lc(E_0\oplus E_1)$ given by
\[
\sigma(b)=\pi(b)\restrict_{E_0}\oplus\pi(\theta^{-1}(b))\restrict_{E_1}.
\]
We also have the bounded operator $Y$ on $E_0\oplus E_1$ given by
\[
Y(e_0\oplus e_1)=\pi(X_2)e_1\oplus\pi(X_1)e_0.
\]
Since $\pi(X_1)^*=\pi(X_2)$, we easily see that $Y$ is self-adjoint.
Thus, the representation $\sigma$ of $B$ extends to a representation $\sigma:B\langle X\rangle\to\Lc(E_0\oplus E_1)$ by setting
$\sigma(X)=Y$.
Of course, $\oneh\in B\subset E_0$ and, if we identify $B$ with $\sigma(B)\subset\Lc(E_0\oplus E_1)$, then we have the conditional
expectation $\Gc:\Lc(E_0\oplus E_1)\to B$ given by $\Gc(Z)=PZ\oneh$.
Now we see that, for every $y\in B\langle X\rangle$, we have
$\Gc(\sigma(y))=\Theta(y)$.
Indeed, for a monomial $m=b_0Xb_1X\cdots b_{n-1}Xb_n$, if $n$ is odd, then $m\oneh\in E_1$ and $\Gc(\sigma(m))=0=\Theta(m)$,
while for $n$ even, we have
\[
\sigma(m)\oneh=\pi\big(b_0X_2\theta^{-1}(b_1)X_1b_2X_2\theta^{-1}(b_3)X_1\cdots b_{n-2}X_2\theta^{-1}(b_{n-1})X_1b_n\big)\oneh
\]
and
\begin{align*}
\Gc(\sigma(m))&=\Thetat\big(b_0X_2\theta^{-1}(b_1)X_1b_2X_2\theta^{-1}(b_3)X_1\cdots b_{n-2}X_2\theta^{-1}(b_{n-1})X_1b_n\big) \\
&=b_0\Ec(a^*\theta^{-1}(b_1)ab_2a^*\theta^{-1}(b_3)\cdots b_{n-2}a^*\theta^{-1}(b_{n-1})a)b_n  \\
&=b_0\Ect(pb_1pb_2\cdots pb_{n-1}p)b_n=\Theta(m),
\end{align*}
where for the third equality we used~\eqref{eq:pevenmomstheta}.

Thus, we may replace $(B\langle X\rangle,\Theta)$ with the $B$-valued C$^*$-noncommutative probability space
$(\Lc(E_0\oplus E_1),\Gc)$, and instead of the algebraic amalgamated free product~\eqref{eq:algamalgfp},
we let
\[
(\At,\Ect)=(B\rtimes_\theta\Ints,\Fc)*_B(\Lc(E_0\oplus E_1),\Gc).
\]
Identifying $p$ with $\sigma(X)\in\Lc(E_0\oplus E_1)$ and $u\in B\otimes_\theta\Ints$ with the unitary corresponding to the group
element $1$ in the crossed product construction, we are done.
\end{proof}

\begin{thm}\label{thm:C*pos}
Let $B$ be a C$^*$-algebra and suppose $(A,\Ec)$ is a $B$-valued C$^*$-non\-com\-muta\-tive probability space.
Suppose $a\in A$ and let $\theta$ be a $*$-automorphism of $B$.
Then the following are equivalent:
\begin{enumerate}[label=(\alph*),leftmargin=20pt]
\item\label{it:Eaa*C*pos} $a$ is $B$-valued R-diagonal and for all $k\ge1$ and all $b_1,\ldots,b_{2k-1}\in B$, we have
\begin{multline}\label{eq:Eaba*C*pos}
\Ec(a^*b_1a\theta(b_2)a^*b_3a\cdots\theta(b_{2k-2})a^*b_{2k-1}a) \\
=\theta\big(\Ec(a\theta(b_1)a^*b_2a\theta(b_3)a^*\cdots b_{2k-2}a\theta(b_{2k-1})a^*)\big).
\end{multline}
\item\label{it:beta12C*pos} $a$ is $B$-valued R-diagonal and for all $k\ge1$, the $2k$-th order
$B$-valued cumulant maps $\beta_k^{(1)}$ and $\beta_k^{(2)}$ (notation defined near~\eqref{eq:betas})
satisfy, for all $b_1,\ldots,b_{2k-1}\in B$,
\begin{equation}\label{eq:beta12C*pos}
\beta^{(2)}_k(b_1,\theta(b_2),b_3,\ldots,\theta(b_{2k-2}),b_{2k-1}) \\
=\theta\big(\beta^{(1)}_k(\theta(b_1),b_2,\theta(b_3),\ldots,b_{2k-2},\theta(b_{2k-1}))\big)
\end{equation}
\item\label{it:C*pos} there exists a $B$-valued C$^*$-non\-com\-muta\-tive probability space $(\At,\Ect)$ with elements $u,p\in\At$
satisfying 
\begin{enumerate}[label=(\roman*)]
\item\label{it:uC*pos} $u$ is a Haar unitary that normalizes $B$ and satisfies $u^*bu=\theta(b)$ for all $b\in B$
\item\label{it:pC*pos} $p\ge0$,
\item\label{it:C*posfree} $\{u^*,u\}$ and $\{p\}$ are free (over $B$) with respect to $\Ect$
\item\label{it:C*posdistr} $a$ and $up$ have the same $B$-valued $*$-distribution.
\end{enumerate}
\end{enumerate}
Moreover, the even moments of $p$ in~\ref{it:C*pos} are given by
\begin{equation}\label{eq:pevenmomsthetaC*pos}
\Ect(pb_1pb_2\cdots pb_{2k-1}p)=\Ec(a^*\theta^{-1}(b_1)ab_2a^*\theta^{-1}(b_3)a\cdots b_{2k-2}a^*\theta^{-1}(b_{2k-1})a).
\end{equation}
for all $k\ge1$ and all $b_1,\ldots,b_{2k-1}\in B$.
\end{thm}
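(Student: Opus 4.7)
The plan is to reduce to Proposition~\ref{prop:HUtheta} as much as possible, and then to upgrade $p=p^*$ to $p\ge0$ by working in a W$^*$-completion and invoking a polar-decomposition argument together with a Hilbert-$B$-module construction in the spirit of Lemma~\ref{lem:C*}.

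First, conditions \ref{it:Eaa*C*pos} and \ref{it:beta12C*pos} of the theorem are syntactically identical to conditions \ref{it:Eaa*theta} and \ref{it:beta12theta} of Proposition~\ref{prop:HUtheta}, so the equivalence \ref{it:Eaa*C*pos}$\Longleftrightarrow$\ref{it:beta12C*pos} is immediate. Moreover, if \ref{it:C*pos} holds then $p\ge0$ in particular means $p=p^*$, so parts~\ref{it:psatheta}--\ref{it:updistatheta} of Proposition~\ref{prop:HUtheta}\ref{it:up0theta} are satisfied by the same $(\At,\Ect,u,p)$ as in \ref{it:C*pos}; hence Proposition~\ref{prop:HUtheta}\ref{it:uptheta} implies \ref{it:beta12C*pos}. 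The moment formula~\eqref{eq:pevenmomsthetaC*pos} will then follow verbatim from~\eqref{eq:pevenmomstheta} once \ref{it:C*pos} is established, since the derivation there only used $p=p^*$ and the four conditions \ref{it:psatheta}--\ref{it:updistatheta}, which already follow from $p\ge0$ together with the other hypotheses of \ref{it:C*pos}.

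The main work is thus \ref{it:beta12C*pos}$\Longrightarrow$\ref{it:C*pos}. Assuming \ref{it:beta12C*pos}, we apply Proposition~\ref{prop:HUtheta}\ref{it:up0theta} together with Lemma~\ref{lem:C*} to produce a $B$-valued C$^*$-noncommutative probability space $(\At_0,\Ect_0)$ and elements $u_0,p_0\in\At_0$ with $p_0=p_0^*$, $u_0$ a $B$-normalizing Haar unitary with $u_0^*bu_0=\theta(b)$, $\{u_0,u_0^*\}$ free over $B$ from $\{p_0\}$, vanishing odd moments of $p_0$, and $a\sim u_0p_0$. Because $a$ is R-diagonal, Remark~\ref{rem:l2.1} reduces the $B$-valued $*$-distribution of $u_0p_0$ to the even alternating moments, and these in turn (using $u_0^*bu_0=\theta(b)$ and freeness) reduce to the sandwich moments
\[
S_k(b_1,\ldots,b_{2k-1})=\Ect_0\big(p_0b_1p_0b_2\cdots p_0b_{2k-1}p_0\big),\qquad k\ge1.
\]
It therefore suffices to exhibit a $B$-valued C$^*$-noncommutative probability space $(\At_1,\Ect_1)$ containing an element $p\ge0$ with the same sandwich moments $S_k$, and then form the amalgamated free product $(\At,\Ect)=(B\rtimes_\theta\Ints,\Fc)*_B(\At_1,\Ect_1)$, taking $u\in B\rtimes_\theta\Ints$ to be the canonical Haar unitary implementing $\theta$.

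To construct $(\At_1,\Ect_1,p)$, we imitate the Hilbert-module construction in the proof of Lemma~\ref{lem:C*}. Build the Hilbert $B$-module $E$ by separation-and-completion of $B\langle Y\rangle$ (with $Y=Y^*$) under the inner product specified by declaring the moments of $Y$ to be the sandwich moments $S_k$ on even-length words and to be chosen zero on odd-length words; the resulting pairing is positive semidefinite because the sandwich moments come from the C$^*$-structure of $\At_0$ (each $S_k$ being of the form $\Ect_0(x^*x)$ on suitably structured arguments). Represent $B\langle Y\rangle$ on a direct sum $E_0\oplus E_1$ (split by parity in $Y$) as in Lemma~\ref{lem:C*}, but adjust the action so that $Y$ acts as a positive operator rather than merely a self-adjoint off-diagonal one; this is the step where the C$^*$-completion of $\At_0$ is essential, and it is the main technical obstacle. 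Once $p\ge0$ is realized with the prescribed sandwich moments, the amalgamated free product construction guarantees conditions~\ref{it:uC*pos}, \ref{it:C*posfree} of~\ref{it:C*pos}; condition~\ref{it:pC*pos} is by construction; and condition~\ref{it:C*posdistr} follows by matching the sandwich moments via the reduction above and Remark~\ref{rem:l2.1}. The main difficulty I expect is verifying positivity of the representation of $Y$ in the Hilbert-module construction while preserving the required sandwich moments, since the natural off-diagonal realization of a self-adjoint $Y$ does not automatically yield a positive operator.
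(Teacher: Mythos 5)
Your handling of the equivalence \ref{it:Eaa*C*pos}$\Longleftrightarrow$\ref{it:beta12C*pos}, of the implication \ref{it:C*pos}$\implies$\ref{it:beta12C*pos}, and of the moment formula~\eqref{eq:pevenmomsthetaC*pos} is fine, and your reduction of \ref{it:beta12C*pos}$\implies$\ref{it:C*pos} is also correctly set up: after Proposition~\ref{prop:HUtheta} and Lemma~\ref{lem:C*} produce a self-adjoint $p_0$ with the right even sandwich moments and vanishing odd moments, all that remains is to replace $p_0$ by a \emph{positive} element without changing the distribution of the product with the Haar unitary. But at exactly that point your argument stops: you propose to ``adjust the action so that $Y$ acts as a positive operator rather than merely a self-adjoint off-diagonal one'' and then explicitly flag this as ``the main technical obstacle.'' That step is the entire content of the implication --- the passage from $p=p^*$ (Proposition~\ref{prop:HUtheta}) to $p\ge0$ is what distinguishes Theorem~\ref{thm:C*pos} from the proposition you are quoting --- so announcing it as an unresolved difficulty leaves the proof incomplete. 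Moreover, the route you sketch (re-doing the Fock/Hilbert-module construction of Lemma~\ref{lem:C*} so that the generator is realized positively) is not a natural fit: the off-diagonal realization of Lemma~\ref{lem:C*} is designed precisely to kill the odd moments, and there is no evident modification of it that outputs a positive operator with prescribed even sandwich moments.

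The paper closes this gap by an entirely different and essentially algebraic device, which your proposal does not contain. One enlarges $(\At,\Ect)$ by replacing $C^*(\{p\}\cup B)$ with $C^*(\{p\}\cup B)\otimes(\Cpx\oplus\Cpx)$ (with conditional expectation tensored with the state annihilating $1\oplus(-1)$), and sets $s=1\otimes(1\oplus(-1))$: a symmetry with $\Ect(s)=0$ that commutes with $|p|$ and with every element of $B$, and such that $\{s,p\}$ is free from $\{u,u^*\}$. Then $(u,p)$ has the same $B$-valued $*$-distribution as $(u,s|p|)$, so $a$ has the same $*$-distribution as $(us)\,|p|$; the point is to absorb the sign into the unitary rather than to remove it from $p$. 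One then checks that $us$ is again a $B$-normalizing Haar unitary with $(us)^*b(us)=s\theta(b)s=\theta(b)$, and --- the one nontrivial verification --- that $\{us,su^*\}$ is free from $\{|p|\}$ over $B$, which follows by rewriting alternating words in these sets as alternating words in $\{u,u^*\}$ and $\{|p|^k-\Ect(|p|^k)\}\cup\{s(|p|^k-\Ect(|p|^k))\}$ and using freeness of $\{|p|,s\}$ from $\{u,u^*\}$. Without this symmetry-absorption argument (or some substitute for it), your proof of \ref{it:beta12C*pos}$\implies$\ref{it:C*pos} does not go through.
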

\begin{proof}
The equivalence of~\ref{it:Eaa*C*pos} and~\ref{it:beta12C*pos}, as well as the implication \ref{it:C*pos}$\implies$\ref{it:Eaa*C*pos}
follow from Proposition~\ref{prop:HUtheta}.
It remains to show \ref{it:Eaa*C*pos}$\implies$\ref{it:C*pos}.

Assuming~\ref{it:Eaa*C*pos}, by Proposition~\ref{prop:HUtheta} and Lemma~\ref{lem:C*}, there is
a $B$-valued C$^*$-noncommutative probability space $(\At,\Ect)$, 
and elements $u$ and $p=p^*$ of $\At$ such that~\ref{it:uC*pos}, \ref{it:C*posfree} and~\ref{it:C*posdistr} of~\ref{it:C*pos} hold,
the even moments of $p$ are given by~\eqref{eq:pevenmomsthetaC*pos} and the odd moments of $p$ all vanish.
By modifying $(\At,\Ect)$, if necessary, we may assume that $p=s|p|$, where $s$ is a symmetry (i.e., a self-adjoint unitary element)
that commutes with $|p|$ and with every element of $B$, satisfies $\Ect(s)=0$ and such that $\{s,p\}$ is free from $\{u,u^*\}$.
Indeed, we may without loss of generality assume $\At$ is the C$^*$-algebra generated by $\{u,p\}$ and the GNS
representation of $\Ect$ is faithful, in which case
$\At$ is the amalgamated free product of $C^*(\{u\}\cup B)$ and $C^*(\{p\}\cup B)$ over $B$.
We may enlarge $(\At,\Ect)$ to be the amalgamated free product of
$C^*(\{u\}\cup B)$ and $C^*(\{p\}\cup B)\otimes(\Cpx\oplus\Cpx)$, where the conditional expectation of the latter onto $B$ is
$\Ect\restrict_{C^*(\{p\}\cup B)}\otimes\tau$, where $\tau:\Cpx\oplus\Cpx\to\Cpx$ is the state sending $1\oplus-1$ to $0$.
We let
\[
s=1\otimes(1\oplus-1)\in C^*(\{p\}\cup B)\otimes(\Cpx\oplus\Cpx)
\]
and note that the pair $(u,p)$ has the same $B$-valued $*$-distribution as $(u,s|p|)$.
Thus, $a$ has the same $*$-distribution as $us|p|$.
We observe that, with respect to $\Ect$, $us$ is a $B$-normalizing Haar unitary with $(us)^*b(us)=s\theta(b)s=\theta(b)$.

We need only show that $\{us,su^*\}$ is free from $\{|p|\}$ with respect to $\Ect$.
This is straightforward to verify.
Indeed, we need only show that every alternating product in the sets $\{(us)^k\mid k\ge1\}\cup\{(su^*)^k\mid k\ge1\}$
and $\{|p|^k-\Ect(|p|^k)\mid k\ge1\}$ evaluates to zero under $\Ect$.
However, keeping in mind that $s$ commutes with $|p|$ and every element of $B$,
we may rewrite each such product as an alternating product in the sets $\{u,u^*\}$ and
\[
\{|p|^k-\Ect(|p|^k)\mid k\ge1\}\cup\{s\big(|p|^k-\Ect(|p|^k)\big)\mid k\ge1\}\cup\{s\},
\]
and such an alternating product evaluates to $0$ under $\Ect$ by freeness of $\{|p|,s\}$ and $\{u,u^*\}$.
\end{proof}

In the case of $B=\Cpx$, Theorem~\ref{thm:C*pos} reduces to the following, which is certainly well known, though
we didn't find a good reference.
It would follow straighforwardly from Proposition~2.6 of \cite{NSS01}.
\begin{cor}\label{cor:scalarpolar}
Let $(A,\tau)$ be a tracial (scalar-valued) C$^*$-noncommutative probability space and let $a\in A$.
Then the following are equivalent:\
\begin{enumerate}[label=(\alph*),leftmargin=20pt]
\item\label{it:scalarRdiag}  $a$ is R-diagonal.
\item\label{it:scalarup} There exists a tracial (scalar-valued) C$^*$-noncommutative probability space $(\At,\taut)$
and elements $u,p\in\At$ such that\
\begin{enumerate}[label=(\roman*)]
\item\label{it:uscalar} $u$ is a Haar unitary,
\item\label{it:pscalar} $p\ge0$,
\item\label{it:scalarfree} $\{u^*,u\}$ and $\{p\}$ are free with respect to $\taut$
\item\label{it:scalardistr} $a$ and $up$ have the same $*$-distribution.
\end{enumerate}
\end{enumerate}
\end{cor}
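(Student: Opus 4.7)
The plan is to derive this corollary directly from Theorem~\ref{thm:C*pos} by specializing to the trivial case $B=\Cpx$, where the only $*$-automorphism available is $\theta=\mathrm{id}_\Cpx$.

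For the direction \ref{it:scalarRdiag}$\Rightarrow$\ref{it:scalarup}, I would first observe that the moment identity~\eqref{eq:Eaba*C*pos} of Theorem~\ref{thm:C*pos}, specialized to scalar $b_j$ and $\theta=\mathrm{id}$, becomes $\tau((a^*a)^k)=\tau((aa^*)^k)$ after pulling the commuting scalars out of $\tau$; this holds automatically by traciality of $\tau$. Thus, if $a$ is R-diagonal, condition~\ref{it:Eaa*C*pos} of Theorem~\ref{thm:C*pos} is satisfied, so \ref{it:C*pos} produces a C$^*$-noncommutative probability space $(\At,\taut)$ with a Haar unitary $u$ (which commutes with $\Cpx$ trivially), a positive element $p$ free from $\{u,u^*\}$, and $up$ having the same $*$-distribution as $a$. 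It only remains to verify that $\taut$ may be taken tracial.

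For that, I would inspect the construction in the proof of Lemma~\ref{lem:C*} and Theorem~\ref{thm:C*pos}: with $B=\Cpx$, the crossed product $B\rtimes_\theta\Ints$ reduces to $C^*(\Ints)$ with its canonical (tracial) Haar state; the algebra generated by the self-adjoint $p$ is commutative so any state on it is automatically tracial; and the amalgamation over $\Cpx$ collapses to an ordinary reduced free product, for which free products of tracial states are well known to be tracial. The subsequent modification that tensors with $\Cpx\oplus\Cpx$ in order to split $p$ as $s|p|$ and extract the positive part $|p|$ preserves commutativity of the ``positive-side'' factor, hence preserves traciality of its state. Putting these together gives a tracial $\taut$ as required.

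The reverse implication \ref{it:scalarup}$\Rightarrow$\ref{it:scalarRdiag} is immediate: apply \ref{it:C*pos}$\Rightarrow$\ref{it:Eaa*C*pos} of Theorem~\ref{thm:C*pos} with $B=\Cpx$ and $\theta=\mathrm{id}$ to conclude that $a$ is $\Cpx$-valued R-diagonal, i.e., R-diagonal in the usual scalar sense. The main (and essentially only) difficulty in the argument is the traciality bookkeeping in the forward direction; once one recognizes that every constituent C$^*$-algebra appearing in the construction of $(\At,\taut)$ is either commutative or $C^*(\Ints)$, each with its obvious tracial state, the standard free-product traciality result closes the proof.
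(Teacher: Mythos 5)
Your proof is correct and follows essentially the same route as the paper: both specialize Theorem~\ref{thm:C*pos} to $B=\Cpx$ and $\theta=\mathrm{id}$, the only difference being that you verify the moment condition~\eqref{eq:Eaba*C*pos} directly from traciality of $\tau$, while the paper verifies the equivalent cumulant condition $\beta^{(1)}_k=\beta^{(2)}_k$ via Proposition~\ref{prop:traceRdiag}. Your additional check that $\taut$ may be taken tracial (free products of tracial states on $C^*(\Ints)$ and on the commutative algebras generated by $p$ and by $s$) addresses a point the paper's proof leaves implicit, and is a worthwhile supplement.
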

\begin{proof}
We apply Theorem~\ref{thm:C*pos} in the case $B=\Cpx$.
Then of course each cumulant map $\beta^{(i)}_k$ is just a real number
and $\theta$ is the identity map.
The condition~\eqref{eq:beta12C*pos} just becomes $\beta^{(2)}_k=\beta^{(1)}_k$ for every $k$, and this holds,
by Proposition~\ref{prop:traceRdiag},
because we assume $\tau$ is a trace.
Now the equivalence of~\ref{it:scalarRdiag} and~\ref{it:scalarup} above follows from the equivalence
of~\ref{it:beta12C*pos} and~\ref{it:C*pos} in Theorem~\ref{thm:C*pos}.
\end{proof}

For an element $a$ in a $B$-valued W$^*$-noncommutative probability space whose polar decomposition is $a=u|a|$, the
joint $B$-valued $*$-distribution of $\{u,|a|\}$ is completely determined by the $B$-valued $*$-distribution of $a$, and vice-versa.
Thus, the following corollary follows from the Theorem~\ref{thm:C*pos}.
\begin{cor}\label{cor:polarfree}
Let $B$ be a von Neumann algebra and suppose $(A,\Ec)$ is a $B$-valued W$^*$-non\-com\-muta\-tive probability space
whose GNS-representation is faithful.
Suppose $a\in A$ has zero kernel and dense range and let $\theta$ be a normal $*$-automorphism of $B$.
Then the following are equivalent:
\begin{enumerate}[label=(\alph*),leftmargin=20pt]
\item\label{it:Eaa*W*pos} $a$ is $B$-valued R-diagonal and for all $k\ge1$ and all $b_1,\ldots,b_{2k-1}\in B$,~\eqref{eq:Eaba*C*pos} holds.
\item\label{it:beta12W*pos} $a$ is $B$-valued R-diagonal and for all $k\ge1$, the $2k$-th order
$B$-valued cumulant maps $\beta_k^{(1)}$ and $\beta_k^{(2)}$
satisfy~\eqref{eq:beta12C*pos}, for all $b_1,\ldots,b_{2k-1}\in B$.
\item\label{it:W*polar} Letting $a=u|a|$ be the polar decomposition of $a$,
\begin{enumerate}[label=(\roman*)]
\item\label{it:uW*} $u$ is a Haar unitary that normalizes $B$ and satisfies $u^*bu=\theta(b)$ for all $b\in B$,
\item\label{it:W*polarfree} $\{u^*,u\}$ and $\{|a|\}$ are free (over $B$) with respect to $\Ect$.
\end{enumerate}
\end{enumerate}
\end{cor}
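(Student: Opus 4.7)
The plan is to reduce everything to Theorem~\ref{thm:C*pos}. The equivalence of \ref{it:Eaa*W*pos} and \ref{it:beta12W*pos} is just the restriction of the corresponding equivalence in Theorem~\ref{thm:C*pos} to the W$^*$-setting, so requires no new argument. For \ref{it:W*polar} $\Longrightarrow$ \ref{it:Eaa*W*pos}, I would observe that hypothesis \ref{it:W*polar} is literally a realization of condition \ref{it:C*pos} of Theorem~\ref{thm:C*pos}, with $(\At,\Ect)$ taken to be $(A,\Ec)$ itself and with $u$ and $|a|$ playing the roles of the Haar unitary and the positive element; the desired conclusion then follows at once from the implication \ref{it:C*pos} $\Longrightarrow$ \ref{it:Eaa*C*pos} of Theorem~\ref{thm:C*pos}.

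The substantive direction is \ref{it:Eaa*W*pos} $\Longrightarrow$ \ref{it:W*polar}. My strategy is first to invoke Theorem~\ref{thm:C*pos} to produce, in some external $B$-valued C$^*$-noncommutative probability space $(\At,\Ect)$, a model $u'p'$ of $a$ with $u'$ a Haar unitary normalizing $B$ and satisfying $(u')^*bu' = \theta(b)$, with $p'\ge 0$, with $\{u',(u')^*\}$ and $\{p'\}$ free over $B$, and with $u'p'$ having the same $B$-valued $*$-distribution as $a$. I would then pass to the GNS completion of $(\At,\Ect)$ to realize it as a $B$-valued W$^*$-noncommutative probability space with faithful GNS representation (quotienting by the GNS kernel if needed), while preserving freeness, the Haar property, positivity of $p'$, normality of the action $(u')^* b u' = \theta(b)$, and the $B$-valued $*$-distribution of $u'p'$.

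The crux, which I expect to be the main obstacle, is the following fact that uses the W$^*$-setting in an essential way: in a $B$-valued W$^*$-noncommutative probability space with faithful GNS representation, if $x$ has zero kernel and dense range, then the polar pair $(v,|x|)$ (with $v$ the polar unitary and $|x|=(x^*x)^{1/2}$) lies inside the von Neumann subalgebra generated by $x$ and $B$, and the joint $B$-valued $*$-distribution of $(v,|x|)$ is determined by the $B$-valued $*$-distribution of $x$. This is because $|x|$ is obtained by bounded Borel functional calculus from $x^*x$, while $v$ is the strong-operator limit of $x(x^*x+\varepsilon)^{-1/2}$ as $\varepsilon\downarrow 0$. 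Applying this to $x = u'p'$ in $\At$ (which has zero kernel and dense range because it shares its $B$-valued $*$-distribution with $a$, and the relevant spectral data at $0$ are detected by the faithful GNS) and to $x = a$ in $A$, one sees that the polar decomposition of $u'p'$ coincides with $u'\cdot p'$ (from $(u'p')^*(u'p') = (p')^2$ one reads off $|u'p'| = p'$, and then $u'$ must be the polar unitary). Consequently the joint $B$-valued $*$-distribution of $(u,|a|)$ in $A$ equals that of $(u',p')$ in $\At$, and the properties in \ref{it:W*polar} transfer from $(u',p')$ to $(u,|a|)$, finishing the proof.
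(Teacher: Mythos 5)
Your proposal is correct and takes essentially the same approach as the paper: the paper deduces the corollary from Theorem~\ref{thm:C*pos} precisely by noting that in a $B$-valued W$^*$-noncommutative probability space the joint $B$-valued $*$-distribution of the polar pair $(u,|a|)$ is completely determined by the $B$-valued $*$-distribution of $a$. Your write-up simply makes explicit the details (passing to a W$^*$-completion of the model, checking that $p'$ has zero kernel so that the polar decomposition of $u'p'$ is $(u',p')$) that the paper leaves implicit.
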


\begin{example}\label{ex:UB}
Perhaps the easiest example of a $B$-valued R-diagonal element satisfying the conditions of Theorem~\ref{thm:C*pos} is when $p\in B$.
In this case, the operator $a$ can be realized in the crossed product C$^*$-algebra $B\rtimes_\theta\Ints$
with respect to the canonoical conditional expectation onto $B$.
In the case when $B$ is a commutative von Neumann algebra with a specified normal, faithful tracial state and $\theta$ is a trace-preserving
normal automorphism that is ergodic, such $B$-valued R-diagonal operators were studied in \cite{DS09} and, among other results,
their Brown measures were computed.
\end{example}

\section{Algebra-valued circular elements}
\label{sec:circ}

In this section, we examine algebra-valued circular elements, which are a very special class
of algebra-valued R-diagonal elements.

As before,
let $B$ be a unital $*$-algebra and let $(A,\Ec)$ be a $B$-valued $*$-noncommutative probability space;
let $a\in A$; for convenience we label $a_1=a$ and $a_2=a^*$ and use the involution $s:\{1,2\}\to\{1,2\}$ with $s(1)=2$.
Let $\Theta:B\langle X_1,X_2\rangle\to B$ be the $B$-valued $*$-distribution of $(a_1,a_2)$,
where we set $X_1^*=X_2$.
Let
$J=\bigcup_{n\ge1}\{1,2\}^n$ and let $(\alpha_j)_{j\in J}$ be the family of cumulant maps for the pair $(a_1,a_2)$.
\begin{defi}\label{def:circ}
We say that $a$ is {\em $B$-valued circular} if $\alpha_j=0$ whenever $j\in J$ and $j\notin\{(1,2),(2,1)\}$.
\end{defi}

Clearly, $B$-valued circular elements are $B$-valued R-diagonal elements.
The notion of $B$-valued circular first appeared in \cite{Sn03}, where {\'S}niady proved interesting combinatorial results about
a certain $B$-valued circular operator for $B=L^\infty[0,1]$ that he noted was equal to the quasinilpotent DT-opertor $T$ from \cite{DH04a}
(and a proof that it is, in fact, $B$-valued circular can be found in \cite{DH04b}) --- see Example~\ref{ex:circDT} for more details.
The notion of $B$-valued circular operators is closely connected to that of $B$-valued semicircular (also called $B$-Gaussian) operators
that were introduced by Speicher \cite{Sp98} and were studied and constructed on Fock spaces by Shlyakhtenko \cite{Sh99}.

At the purely algebraic level, consider a family $(x_i)_{i\in I}$ of $B$-valued random variables in a $B$-valued noncommutative probability space.
Let $J=\bigcup_{n\ge1}I^n$ and denote by $(\gamma_j)_{j\in J}$ the $B$-valued cumulant maps associated to this family.
The family is said to be {\em centered $B$-valued semicircular} if $\gamma_j=0$ for every $j$ of length not equal to $2$.

In fact, the following result is immediate from the definitions:
\begin{prop}\label{prop:circsemi} 
Suppose $a$ is an element of a $B$-valued $*$-noncommutative probability space and let
\[
x_1=\RealPart a=\frac{a+a^*}2,\quad x_2=\ImagPart a=\frac{a-a^*}{2i}.
\]
Let
$J=\bigcup_{n\ge1}\{1,2\}^n$ and let $(\alpha_j)_{j\in J}$ be the family of cumulant maps for the pair $(a_1,a_2)$.
Let $(\gamma_j)_{j\in J}$ be the family of cumulant maps for the pair $(x_1,x_2)$.
Then the following are equivalent:
\begin{enumerate}[label=(\roman*),labelwidth=3ex,leftmargin=30pt]
\item $a$ is $B$-valued circular
\item the pair $(x_1,x_2)$ is centered $B$-valued semicircular, $\gamma_{(1,1)}=\gamma_{(2,2)}$ and $\gamma_{(1,2)}=-\gamma_{(2,1)}$.
\end{enumerate}
Furthermore, when the above conditions hold, we have
\[ 
\gamma_{(1,1)}=\frac14\left(\alpha_{(1,2)}+\alpha_{(2,1)}\right),\qquad
\gamma_{(1,2)}=\frac i4\left(\alpha_{(1,2)}-\alpha_{(2,1)}\right).
\] 
\end{prop}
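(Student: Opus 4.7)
The plan is to exploit the standard multilinearity of $B$-valued cumulants in their random-variable slots. Precisely, if $(z_k)_{k\in K}$ is a family of random variables in some $B$-valued $*$-noncommutative probability space with cumulant maps $\kappa_j$, and if $y_\ell = \sum_k c_{\ell k} z_k$ with scalars $c_{\ell k}\in\Cpx$, then for every $n\geq 1$ the cumulants $\kappa'_j$ of the family $(y_\ell)$ satisfy
\[
\kappa'_{(\ell_1,\ldots,\ell_n)}(b_1,\ldots,b_{n-1})
 = \sum_{k_1,\ldots,k_n} c_{\ell_1 k_1}\cdots c_{\ell_n k_n}\,\kappa_{(k_1,\ldots,k_n)}(b_1,\ldots,b_{n-1}).
\]
First I would prove this transformation rule by induction on $n$ from the moment-cumulant formula~\eqref{eq:alphacummom}: the moment maps $\psi_j$ of~\eqref{eq:psij} are visibly multilinear in the random variables, so multilinearity propagates to the cumulants via the inverse of the moment-cumulant relation.

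For (i)$\implies$(ii) I would apply the rule to the change of variables $x_1=(a+a^*)/2$, $x_2=(a-a^*)/(2i)$, which expresses each $\gamma_{(i_1,\ldots,i_n)}$ as a $2^n$-term linear combination of the $\alpha_{(\varepsilon_1,\ldots,\varepsilon_n)}$ with coefficients given by products of entries from $\{\tfrac12,\tfrac12,-\tfrac i2,\tfrac i2\}$. For $n\ne 2$ every $\alpha$ on the right vanishes by hypothesis, so $\gamma_{(i_1,\ldots,i_n)}=0$; for $n=2$ only the two terms with $(\varepsilon_1,\varepsilon_2)\in\{(1,2),(2,1)\}$ survive. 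A short sign-tracking calculation then yields
\[
\gamma_{(1,1)}=\gamma_{(2,2)}=\tfrac14(\alpha_{(1,2)}+\alpha_{(2,1)}),\qquad
\gamma_{(1,2)}=-\gamma_{(2,1)}=\tfrac{i}{4}(\alpha_{(1,2)}-\alpha_{(2,1)}),
\]
which simultaneously gives the second clause of (ii) and the two explicit formulas at the end of the proposition.

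For (ii)$\implies$(i) I would apply the same rule in the opposite direction using $a=x_1+ix_2$, $a^*=x_1-ix_2$, thereby writing each $\alpha_{(\varepsilon_1,\ldots,\varepsilon_n)}$ as a linear combination of the $\gamma_{(i_1,\ldots,i_n)}$ with coefficients from $\{1,i,1,-i\}$. For $n\ne 2$ every $\gamma$ vanishes by hypothesis, so $\alpha_{(\varepsilon_1,\ldots,\varepsilon_n)}=0$; for $n=2$ only $\alpha_{(1,1)}$ and $\alpha_{(2,2)}$ need be checked, and each is a four-term sum whose coefficients $\pm 1,\pm i$ cancel in pairs under the two hypotheses $\gamma_{(1,1)}=\gamma_{(2,2)}$ and $\gamma_{(1,2)}=-\gamma_{(2,1)}$.

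The main (and only) obstacle is really the initial establishment of the change-of-basis rule for cumulants; once that is in place, everything reduces to a bookkeeping exercise for four length-two expressions, with no combinatorial or analytic difficulty.
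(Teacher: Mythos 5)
Your proof is correct, and it supplies exactly the standard multilinearity (change-of-basis) argument for cumulants that the paper treats as implicit: the authors give no proof at all, stating only that the result ``is immediate from the definitions.'' The sign bookkeeping with $c_{1,1}=c_{1,2}=\tfrac12$, $c_{2,1}=-\tfrac i2$, $c_{2,2}=\tfrac i2$ checks out in both directions, so your write-up is a faithful expansion of what the paper leaves to the reader.
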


Using Speicher's result that freeness is equivalent to vanishing of mixed cumulants (see section 3.3 of \cite{Sp98}), we have:
\begin{cor}
If $a$ is a $B$-valued circular element with associated cumulant maps $\alpha_{(1,2)}$ and $\alpha_{(2,1)}$, then $\RealPart a$
and $\ImagPart a$ are free with respect to the conditional expectation onto $B$ if and only if $\alpha_{(1,2)}=\alpha_{(2,1)}$
\end{cor}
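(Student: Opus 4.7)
The plan is to combine Proposition~\ref{prop:circsemi} with Speicher's criterion for $B$-valued freeness in terms of vanishing of mixed cumulants (stated in Section~3.3 of \cite{Sp98}, and already invoked in the corollary just above). The point is that, for the centered $B$-valued semicircular pair $(x_1,x_2)=(\RealPart a,\ImagPart a)$, the list of cumulants that could potentially be nonzero is very short, so the freeness condition collapses to a single identity.

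First I would apply Proposition~\ref{prop:circsemi}: since $a$ is $B$-valued circular, the pair $(x_1,x_2)$ is centered $B$-valued semicircular, and the only cumulant maps $\gamma_j$ that are not automatically zero are the four second-order maps $\gamma_{(1,1)}$, $\gamma_{(1,2)}$, $\gamma_{(2,1)}$, $\gamma_{(2,2)}$, with $\gamma_{(1,1)}=\gamma_{(2,2)}$ and $\gamma_{(1,2)}=-\gamma_{(2,1)}$. Next I invoke Speicher's characterization: $\{x_1\}$ and $\{x_2\}$ are free over $B$ if and only if every mixed cumulant $\gamma_j$ (i.e., one whose index sequence $j$ is not constant) vanishes as a multilinear map. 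Among the potentially nonzero cumulants listed above, the mixed ones are precisely $\gamma_{(1,2)}$ and $\gamma_{(2,1)}$, and these vanish simultaneously because $\gamma_{(1,2)}=-\gamma_{(2,1)}$. Hence freeness of $\RealPart a$ and $\ImagPart a$ is equivalent to $\gamma_{(1,2)}=0$.

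Finally, I would substitute the explicit formula from Proposition~\ref{prop:circsemi}, namely
\[
\gamma_{(1,2)}=\frac{i}{4}\bigl(\alpha_{(1,2)}-\alpha_{(2,1)}\bigr),
\]
from which $\gamma_{(1,2)}=0$ as a multilinear map is equivalent to the equality $\alpha_{(1,2)}=\alpha_{(2,1)}$ of cumulant maps. This gives the claimed equivalence.

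I do not anticipate a serious obstacle: the only point to be careful about is that the vanishing and equalities are among $B$-bilinear (respectively $B$-linear in one argument) maps, not just scalars, but this is exactly the setting in which Proposition~\ref{prop:circsemi} and Speicher's mixed-cumulant characterization are formulated, so no extra work is required.
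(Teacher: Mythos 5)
Your proposal is correct and follows exactly the route the paper intends: the paper gives no written proof beyond the remark that the corollary follows from Speicher's vanishing-of-mixed-cumulants criterion together with Proposition~\ref{prop:circsemi}, and your argument (only the second-order cumulants survive, the mixed ones are $\gamma_{(1,2)}=-\gamma_{(2,1)}=\frac i4(\alpha_{(1,2)}-\alpha_{(2,1)})$, so freeness is equivalent to $\alpha_{(1,2)}=\alpha_{(2,1)}$) is precisely that derivation spelled out.
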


Suppose $B$ is a C$^*$-algebra and $\Theta$ is the $B$-valued distribution of a family of $B$-valued random variables indexed by $I$.
We say that $\Theta$ is positive if $\Theta(p^*p)\ge0$ for every $p\in B\langle X_i\mid i\in I\rangle$.
In general, it can be difficult to decide whether $\Theta$ is positive knowing only the $B$-valued cumulant maps of the family.
However, in the case that the family is $B$-valued semicircular, an answer is provided by Theorem 4.3.1 of \cite{Sp98}.
In the case that $I$ is finite, this condition is equivalent to complete positivity of the covariance $\eta:B\to M_{|I|}(B)$,
which is defined by
\[
\eta(b)=\left(\gamma_{(i_1,i_2)}(b)\right)_{i_1,i_2\in I}.
\]
In the case of the family $(x_1,x_2)$ of real and imaginary parts of a $B$-valued circular system from Proposition~\ref{prop:circsemi},
the covariance
$\eta:B\to M_2(B)=M_2(\Cpx)\otimes B$ is given by
\[
\eta=\frac12\left(\frac12\left(\begin{matrix}1&i\\-i&1\end{matrix}\right)\otimes\alpha_{(1,2)}
+\frac12\left(\begin{matrix}1&-i\\i&1\end{matrix}\right)\otimes\alpha_{(2,1)}\right).
\]
Thus, we have the following corollary of Speicher's theorem mentioned above
and of Shly\-akh\-tenko's construction~\cite{Sh99} of $B$-valued semicircular elements.

To recall the set-up,
let $B$ be a $C^*$-algebra as before,
consider a $B$-valued circular element $a$, let $(a_1,a_2)=(a,a^*)$ and let $\Theta$ be the distribution and $\alpha_{(1,2)}$, $\alpha_{(2,1)}$
the cumulant maps of $(a_1,a_2)$, as usual.
Endow the algebra $B\langle X_1,X_2\rangle$ with the $*$-operation that extends the given one on $B$ by setting $X_1^*=X_2$.
\begin{cor}\label{cor:poscirc}
The distribution $\Theta$ is positive if and only if $\alpha_{(1,2)}$ and $\alpha_{(2,1)}$ are completely positive maps from $B$ into itself.
In this case, $\Theta$ is the $*$-distribution of a $B$-valued circular element
in a $B$-valued C$^*$-noncommutative probability space.
\end{cor}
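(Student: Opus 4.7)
The plan is to reduce the positivity question to Speicher's positivity criterion for $B$-valued semicircular families (Theorem~4.3.1 of~\cite{Sp98}), via Proposition~\ref{prop:circsemi}, which transfers from the circular element $a$ to its real and imaginary parts $(x_1,x_2)=(\RealPart a,\ImagPart a)$.

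First I would observe that the map sending $X_1\mapsto Y_1+iY_2$ and $X_2\mapsto Y_1-iY_2$ is a $*$-isomorphism of $B\langle X_1,X_2\rangle$ (with $X_1^*=X_2$) onto $B\langle Y_1,Y_2\rangle$ (with $Y_j^*=Y_j$), pulling $\Theta$ back to the joint $B$-valued $*$-distribution of the pair $(x_1,x_2)$. Positivity of $\Theta$ is therefore equivalent to positivity of that joint distribution. By Proposition~\ref{prop:circsemi} the pair $(x_1,x_2)$ is centered $B$-valued semicircular, and its covariance map $\eta\colon B\to M_2(B)$ is precisely the expression displayed just before the corollary, namely
\[
\eta=\tfrac12\bigl(P_1\otimes\alpha_{(1,2)}+P_2\otimes\alpha_{(2,1)}\bigr),
\]
where $P_1$ and $P_2$ are the two rank-one positive matrices in $M_2(\Cpx)$ appearing there. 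Applying Speicher's theorem, positivity of $\Theta$ is then equivalent to complete positivity of $\eta$.

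The main step is to verify that complete positivity of $\eta$ is equivalent to complete positivity of both $\alpha_{(1,2)}$ and $\alpha_{(2,1)}$. The ``if'' direction is immediate, since a positive scalar matrix tensored with a completely positive map is completely positive and sums of completely positive maps are completely positive. For the ``only if'' direction I would compress $\eta$ by the two orthonormal vectors $e_1=(1,-i)^{T}/\sqrt 2$ and $e_2=(1,i)^{T}/\sqrt 2$ spanning the ranges of $P_1$ and $P_2$: a short direct calculation gives $(e_1^*\otimes 1_B)\eta(e_1\otimes 1_B)=\tfrac12\alpha_{(1,2)}$ and $(e_2^*\otimes 1_B)\eta(e_2\otimes 1_B)=\tfrac12\alpha_{(2,1)}$, and compression by a fixed column vector sends completely positive maps between C$^*$-algebras to completely positive maps. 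This is the only step that requires a small direct computation; it is also the place one has to be slightly careful, but it is the mildest possible obstacle.

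For the final sentence of the corollary, assuming $\alpha_{(1,2)}$ and $\alpha_{(2,1)}$ are completely positive, Shlyakhtenko's Fock-space construction from~\cite{Sh99} produces a centered $B$-valued semicircular pair $(x_1,x_2)$ with covariance $\eta$ inside a $B$-valued C$^*$-noncommutative probability space $(A,\Ec)$; setting $a:=x_1+ix_2\in A$ then gives, by Proposition~\ref{prop:circsemi} applied in the reverse direction, a $B$-valued circular element whose two nonvanishing cumulant maps are exactly $\alpha_{(1,2)}$ and $\alpha_{(2,1)}$, so its $B$-valued $*$-distribution is $\Theta$.
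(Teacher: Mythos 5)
Your proposal is correct and follows exactly the route the paper intends: the corollary is stated there as an immediate consequence of Speicher's positivity criterion for $B$-valued semicircular families together with Shlyakhtenko's Fock-space construction, mediated by Proposition~\ref{prop:circsemi} and the displayed covariance $\eta$. The only detail the paper leaves implicit --- that complete positivity of $\eta=\tfrac12\bigl(e_1e_1^*\otimes\alpha_{(1,2)}+e_2e_2^*\otimes\alpha_{(2,1)}\bigr)$ is equivalent to complete positivity of both $\alpha_{(1,2)}$ and $\alpha_{(2,1)}$ --- is supplied correctly by your compression argument along the orthonormal eigenvectors $e_1,e_2$ of the two rank-one matrices.
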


Turning back to the case of $B$ a general $*$-algebra, without assuming anything about boundedness or positivity,
from Proposition~\ref{prop:traceRdiag}, we immediately see the following:
\begin{prop}\label{prop:traceCirc}
If $a$ is a $B$-valued circular element with associated cumulant maps $\alpha_{(1,2)}$ and $\alpha_{(2,1)}$ and if
$\tau$ is a tracial linear functional on $B$, then $\tau\circ\Ec$ is tracial on $\alg(B\cup\{a,a^*\})$
if and only if, for all $b_1,b_2\in B$, we have
\[
\tau(\alpha_{(1,2)}(b_1)b_2)=\tau(b_1\alpha_{(2,1)}(b_2)).
\]
\end{prop}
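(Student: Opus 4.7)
The plan is to deduce this immediately from Proposition~\ref{prop:traceRdiag}, which gives a necessary and sufficient condition for traciality of $\tau\circ\Ec$ on $\alg(B\cup\{a,a^*\})$ in terms of the alternating cumulant maps $\beta^{(1)}_k,\beta^{(2)}_k$ of a $B$-valued R-diagonal element, for every $k\ge1$. Since $B$-valued circular elements are $B$-valued R-diagonal (as noted right after Definition~\ref{def:circ}), Proposition~\ref{prop:traceRdiag} applies to $a$.

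First I would observe that, by Definition~\ref{def:circ}, all cumulant maps $\alpha_j$ vanish except for $\alpha_{(1,2)}$ and $\alpha_{(2,1)}$. In the notation of Proposition~\ref{prop:traceRdiag}, this means $\beta^{(1)}_k=\beta^{(2)}_k=0$ for all $k\ge2$, while $\beta^{(1)}_1=\alpha_{(1,2)}$ and $\beta^{(2)}_1=\alpha_{(2,1)}$. Consequently, for $k\ge2$ the identity
\[
\tau\big(\beta^{(1)}_k(b_1,\ldots,b_{2k-1})b_{2k}\big)=\tau\big(b_1\beta^{(2)}_k(b_2,\ldots,b_{2k})\big)
\]
holds trivially with both sides equal to zero, so the whole family of conditions provided by Proposition~\ref{prop:traceRdiag} collapses to the single $k=1$ condition
\[
\tau\big(\alpha_{(1,2)}(b_1)b_2\big)=\tau\big(b_1\alpha_{(2,1)}(b_2)\big),
\]
which is precisely the condition in the statement.

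I do not anticipate any real obstacle here: the result is a direct corollary of Proposition~\ref{prop:traceRdiag} together with the defining cumulant-vanishing property of $B$-valued circular elements. The only thing to verify carefully is the reduction that, in the circular case, the only non-trivial instance of the Proposition~\ref{prop:traceRdiag} equivalence is the one at $k=1$, and this is immediate from Definition~\ref{def:circ}.
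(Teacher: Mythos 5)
Your proposal is correct and matches the paper's own (implicit) argument: the paper states the proposition as an immediate consequence of Proposition~\ref{prop:traceRdiag}, and your reduction — that for a $B$-valued circular element $\beta^{(1)}_k=\beta^{(2)}_k=0$ for $k\ge2$ while $\beta^{(1)}_1=\alpha_{(1,2)}$ and $\beta^{(2)}_1=\alpha_{(2,1)}$, so only the $k=1$ instance of the traciality condition is non-trivial — is exactly the intended justification.
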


We will now examine the power series considered in Section~\ref{sec:series}
for a $B$-valued circular element $a$.
Theorem~\ref{thm:powerseries} yields
\begin{prop}\label{prop:recur}
Consider the formal power series (in the sense described in Section~\ref{sec:series})
\begin{align}
F(b_1,b_2)&=\sum_{n=0}^\infty\Ec\big((ab_1a^*b_2)^n\big) \label{eq:fb1b2} \\
G(b_1,b_2)&=\sum_{n=0}^\infty\Ec\big((a^*b_1ab_2)^n\big). \label{eq:gb1b2}
\end{align}
for a $B$-valued circular element $a$.
Then
\begin{align}
F(b_1,b_2)&=1+\alpha_{(1,2)}(b_1G(b_2,b_1))b_2F(b_1,b_2), \label{eq:frecur} \\[0.5ex]
G(b_1,b_2)&=1+\alpha_{(2,1)}(b_1F(b_2,b_1))b_2G(b_1,b_2).  \label{eq:grecur} 
\end{align}
\end{prop}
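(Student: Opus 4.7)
The plan is to derive this as an immediate specialization of Theorem~\ref{thm:powerseries}. Recall that for a $B$-valued circular element $a$, Definition~\ref{def:circ} asserts that the only nonvanishing cumulant maps of the pair $(a, a^*)$ are $\alpha_{(1,2)}$ and $\alpha_{(2,1)}$. In the notation introduced just before Proposition~\ref{prop:mrecurse}, this means $\alpha^{(1)}_k = \alpha^{(2)}_k = 0$ for all $k \ge 2$, while $\alpha^{(1)}_1 = \alpha_{(1,2)}$ and $\alpha^{(2)}_1 = \alpha_{(2,1)}$.

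The series $F$ and $G$ defined in~\eqref{eq:fb1b2}--\eqref{eq:gb1b2} coincide verbatim with those defined in~\eqref{eq:F}--\eqref{eq:G} of Theorem~\ref{thm:powerseries}, since $B$-valued circularity implies $B$-valued R-diagonality and the hypotheses of Theorem~\ref{thm:powerseries} apply. Thus the identities~\eqref{eq:Frecurse}--\eqref{eq:Grecurse} hold. In each of these identities, the sum over $\ell \ge 1$ collapses to its $\ell = 1$ summand, since $\alpha^{(1)}_\ell$ and $\alpha^{(2)}_\ell$ vanish for $\ell \ge 2$. For $\ell = 1$, the multilinear map $\alpha^{(1)}_1 = \alpha_{(1,2)}$ takes a single argument, namely $b_1 G(b_2, b_1)$, and is multiplied on the right by $b_2 F(b_1, b_2)$, yielding precisely~\eqref{eq:frecur}. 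The formula~\eqref{eq:grecur} follows symmetrically from~\eqref{eq:Grecurse}.

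Strictly speaking, since the result is phrased in terms of formal power series, one should verify the collapse of the sum in the framework of multilinear function series (as set up at the end of Section~\ref{sec:series}). In that language, the series $A^{(i)}$ of the proposition preceding the statement reduces, in the circular case, to the multilinear function series whose only nonzero term is its first-order term $\alpha^{(i)}_1$, and the multivariate composition~\eqref{eq:M1recurse}--\eqref{eq:M2recurse} simplifies accordingly. There is no genuine obstacle here; the argument is entirely a bookkeeping specialization of the already-established Theorem~\ref{thm:powerseries}, and the main thing to get right is the identification of the arguments (alternation between $b_1 G(b_2, b_1)$ and $b_2 F(b_1, b_2)$, truncated to length one).
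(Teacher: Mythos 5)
Your proposal is correct and matches the paper's own (implicit) argument: the paper states the proposition with the single phrase ``Theorem~\ref{thm:powerseries} yields,'' i.e., exactly the specialization you carry out, where circularity kills $\alpha^{(i)}_\ell$ for $\ell\ge2$ and the sums in \eqref{eq:Frecurse}--\eqref{eq:Grecurse} collapse to their $\ell=1$ terms. Your identification $\alpha^{(1)}_1=\alpha_{(1,2)}$, $\alpha^{(2)}_1=\alpha_{(2,1)}$ and of the single surviving argument $b_1G(b_2,b_1)$ (resp.\ $b_1F(b_2,b_1)$) is exactly what is needed.
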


In the case of a $B$-valued C$^*$-noncommutative probability space, the series~\eqref{eq:fb1b2}--\eqref{eq:gb1b2} 
define $B$-valued holomorphic functions
with domain equal to the subset of $B\oplus B$ consisting of all pairs $(b_1,b_2)$ such that $\|b_1\|\,\|b_2\|<\|a\|^2$.
From these functions one can recover only some of the information about the $B$-valued distribution of $a$.
Later, in the appendix, we will study a particular case of these functions evaluated at $b_1,b_2\in\Cpx1$.

\begin{example}\label{ex:circDT}
As observed by {\'S}niady \cite{Sn03}, (see \cite{DH04b} for a proof), the quasinilpotent DT-operator $T$ from \cite{DH04a}
is a $L^\infty[0,1]$-valued circular operator, with cumulant maps given by
\[ 
\alpha_{(1,2)}(f)(x)=\int_x^1f(t)\,dt,\qquad
\alpha_{(2,1)}(f)(x)=\int_0^xf(t)\,dt.
\] 
We note that if $\theta$ is the automorphism of $L^\infty[0,1]$ corresponding to the homeomophism $t\mapsto 1-t$ of $[0,1]$,
then
\[
\alpha_{(2,1)}(b)=\theta(\alpha_{(1,2)}(\theta(b))).
\]
Thus, Corollary~\ref{cor:polarfree}
applies and we have the following result.
\end{example}

\begin{cor}\label{cor:DTpolar}
Let $T$ be a quasinilpotent DT-operator in an $L^\infty[0,1]$-valued $W^*$-non\-com\-mu\-ta\-tive probability space $(A,\Ec)$
with $\Ec$ faithful
and let $T=U|T|$ be its polar decomposition.
Then
\begin{enumerate}[label=(\roman*),labelwidth=3ex,leftmargin=30pt]
\item $U$ is an $L^\infty[0,1]$-normalizing Haar unitary element,
\item for every $b\in L^\infty[0,1]$ we have $U^*bU=\theta(b)$,
\item $\{U^*,U\}$ and $\{|T|\}$ are free with respect to $\Ec$.
\end{enumerate}
\end{cor}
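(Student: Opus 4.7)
The plan is to apply Corollary~\ref{cor:polarfree} to $T$, with $\theta$ being the automorphism of $L^\infty[0,1]$ induced by the homeomorphism $t\mapsto 1-t$. Since $T$ is $L^\infty[0,1]$-valued circular by Example~\ref{ex:circDT}, condition~\ref{it:beta12W*pos} of Corollary~\ref{cor:polarfree} reduces, for this choice of $\theta$, to checking the single-variable identity $\beta_1^{(2)}(b)=\theta(\beta_1^{(1)}(\theta(b)))$; all higher-order cumulant maps $\beta_k^{(i)}$ with $k\ge2$ vanish identically by Definition~\ref{def:circ}, so both sides of~\eqref{eq:beta12C*pos} are automatically zero for $k\ge 2$.

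First I would identify $\beta_1^{(1)}=\alpha_{(1,2)}$ and $\beta_1^{(2)}=\alpha_{(2,1)}$ from the notation established near~\eqref{eq:betas}. Then I would directly verify the relation
\[
\alpha_{(2,1)}(f)(x)=\int_0^x f(t)\,dt=\int_{1-x}^1 f(1-s)\,ds=\theta\!\left(\alpha_{(1,2)}(\theta(f))\right)(x)
\]
by the substitution $s=1-t$, which is precisely the identity asserted in Example~\ref{ex:circDT}. Combined with the verification that $\theta$ is normal (it is induced by a measure-preserving homeomorphism), this establishes condition~\ref{it:beta12W*pos} of Corollary~\ref{cor:polarfree}.

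The only remaining hypothesis of Corollary~\ref{cor:polarfree} to confirm is that $T$ has zero kernel and dense range and that the ambient GNS representation is faithful. This is where a brief appeal to known properties of the quasinilpotent DT-operator is needed: $T$ belongs to a II$_1$-factor in which the trace is faithful, and by results on DT-operators (e.g., from \cite{DH04a}) its Brown measure is supported on $\{0\}$ with no atom, so $\ker T=\{0\}$ and $\overline{\operatorname{ran} T}$ is the whole Hilbert space. With all hypotheses of Corollary~\ref{cor:polarfree} verified, its conclusion \ref{it:W*polar} gives exactly (i)--(iii) of the corollary.

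The main technical issue is really a bookkeeping one: making sure that the specific relation between $\alpha_{(1,2)}$ and $\alpha_{(2,1)}$ noted by {\'S}niady fits the cumulant-matching condition~\eqref{eq:beta12C*pos}, and confirming that, because $T$ is circular rather than merely R-diagonal, this single first-order identity suffices. Once that observation is in place, the corollary is essentially immediate from Corollary~\ref{cor:polarfree} and the kernel/range properties of the quasinilpotent DT-operator.
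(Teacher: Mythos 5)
Your proposal follows the paper's own route exactly: the paper's ``proof'' of Corollary~\ref{cor:DTpolar} consists precisely of the observation in Example~\ref{ex:circDT} that $\alpha_{(2,1)}(b)=\theta(\alpha_{(1,2)}(\theta(b)))$ for $\theta$ induced by $t\mapsto1-t$, followed by an appeal to Corollary~\ref{cor:polarfree}, and your change-of-variables verification of that identity, together with the remark that circularity makes the conditions~\eqref{eq:beta12C*pos} vacuous for $k\ge2$, is correct. One small correction: your justification of $\ker T=\{0\}$ via ``the Brown measure is supported on $\{0\}$ with no atom'' is garbled, since the Brown measure of a quasinilpotent operator \emph{is} $\delta_0$; the relevant fact is that the spectral distribution of $T^*T$ with respect to the trace has no atom at $0$ (known from \cite{DH04a}), which gives $\ker T=\{0\}$ and hence, in a II$_1$ factor, dense range as well.
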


As advertised, here is an application of Lemma~\ref{lem:notfree},
yielding a $B$-valued R-diagonal element $a$ with $B$ two-dimensional and such that in the polar decomposition
$a=u|a|$ of $a$, $u$ is unitary but $\{u,u^*\}$ and $\{|a|\}$ are not free over $B$.
\begin{example}\label{ex:nofreepolar}
Take $B=\Cpx\oplus\Cpx$, $\tau_B(\lambda_1\oplus\lambda_2)=\frac{\lambda_1+\lambda_2}2$ and define 
\begin{align*}
\alpha_{(1,2)}(\lambda_1\oplus\lambda_2)&=\frac{\lambda_1}2\oplus\left(\frac{\lambda_1}2+\lambda_2\right) \\
\alpha_{(2,1)}(\lambda_1\oplus\lambda_2)&=\frac{\lambda_1+\lambda_2}2\oplus\lambda_2.
\end{align*}
Then $\alpha_{(1,2)}$ and $\alpha_{(2,1)}$ are completely positive and
satisfy the condition of Proposition~\ref{prop:traceCirc} for traciality.
Thus, by Corollary~\ref{cor:poscirc}, there is a $B$-valued C$^*$-noncommutative probability space $(A,\Ec)$
containing a $B$-valued circular element $a$ with corresponding cumulant maps $\alpha_{(1,2)}$ and $\alpha_{(2,1)}$;
we assume $A$ is generated as a C$^*$-algebra by $B\cup\{a\}$, and that the GNS representation
of $\Ec$ is faithful;
by Proposition~\ref{prop:traceCirc}, $\tau=\tau_B\circ\Ec$ is a positive trace on $A$;
since it has faithful GNS representation, it is faithful.
We also have $\alpha_{(2,1)}(1)=1$ and $\alpha_{(1,2)}(1)\ne1$.
Thus by Lemma~\ref{lem:notfree},
$a$ does not have the same $B$-valued $*$-distribution
as any element $up$ in any $B$-valued $*$-noncommutative probability space with $u$ unitary,
with $p$ self-adjoint and with $\{u,u^*\}$ and $\{p\}$ free over $B$.
In particular, we may take the von Neumann algebra generated by the image of the GNS representation of $\tau$
we get a larger $B$-valued $*$-noncommutative probability space $(\At,\Ect)$ and in $\At$ we have
the polar decomposition $a=u|a|$ of $a$.
By Proposition~\ref{prop:afacts} below, $a$ has zero kernel, so $u$ is unitary.
By Proposition~\ref{prop:W*polar}, $u$ is a Haar unitary.
But $\{u,u^*\}$ and $\{|a|\}$ cannot be free with respect to $\Ect$ (from Lemma~\ref{lem:notfree}, as remarked above).
\end{example}

\appendix

\section{On a distribution}
\label{sec:dist}

In this appendix, we describe investigations of the distribution $\mu_{a^*a}=\mu_{aa^*}$ of the element $aa^*$
with respect to the trace $\tau$,
where $a$ is the $B$-valued circular element described in Example~\ref{ex:nofreepolar}.
A Mathematica Notebook file containing the detailed calculations will be made available with this paper.\footnote{
The Notebook file is available with the arXiv source, see  arXiv:1512.06321, or from
http://www.math.tamu.edu/$\tilde{\;}\,$kdykema/Research/rdiag.NonFreePolarExample.nb.}

Using the equations~\eqref{eq:fb1b2} and~\eqref{eq:gb1b2} with $b_1=b_2=s1$ for $s$ in some neighborhood of $0$ in $\Cpx$
and letting $z=s^2$, we have
\begin{align*}
f(z)&:=F(s1,s1)=\sum_{n=0}^\infty \Ec((aa^*)^n)z^n \\
g(z)&:=G(s1,s1)=\sum_{n=0}^\infty \Ec((a^*a)^n)z^n.
\end{align*}
We are interested in the moment generating function for $aa^*$ with respect to the trace $\tau$, and this is the function
\[
h(z):=\tau(f(z))=\sum_{n=0}^\infty\tau((aa^*)^n)z^n=\sum_{n=0}^\infty\tau((a^*a)^n)z^n=\tau(g(z)).
\]
The recursive relations in Proposition~\ref{prop:recur} yield
\begin{align*}
f(z)&=1+z\,\alpha_{(1,2)}(g(z))\,f(z) \\
g(z)&=1+z\,\alpha_{(2,1)}(f(z))\,g(z).
\end{align*}
Since $B=\Cpx\oplus\Cpx$ and $f$ and $g$ are $B$-valued functions, we write $f=f_1\oplus f_2$ and $g=g_1\oplus g_2$.
Using the definitions of $\alpha_{(1,2)}$ and $\alpha_{(2,1)}$ from Example~\ref{ex:nofreepolar}, we have the recursive relations
\begin{align} 
f_1&=1+z\left(\frac{g_1}2\right)f_1 \\
f_2&=1+z\left(\frac{g_1}2+g_2\right)f_2 \\
g_1&=1+z\left(\frac{f_1+f_2}2\right)g_1 \\
g_2&=1+zf_2g_2.
\end{align}
We substitute $h=(f_1+f_2)/2$ and, using simple elimination, arrive at a polynomial identity for $h$:
\begin{equation}\label{eq:hpoly}
8z^3h^4-20z^2h^3+8z(z+2)h^2+(z^2-12 z-4)h+4=0.
\end{equation}
This allows computation of arbitrarily many terms of the series expansion for $h$ around $0$, and we find
\begin{equation}\label{eq:hseries}
h(z)=1+z+\frac{9}{4}z^2+\frac{13}{2}z^3+\frac{341}{16}z^4+\frac{1207}{16}z^5+\frac{17985}{64}z^6+O(|z|^7).
\end{equation}

The Stieltjes transform $G=G_{\mu_{a^*a}}$ of the measure $\mu_{a^*a}$ is, for $w$ in the complement of the closed support of $\mu_{a^*a}$,
\[
G(w)=\int_{\Reals}\frac1{w-t}\,d\mu_{a^*a}(t)=w^{-1}h(w^{-1}).
\]
Note that $G(w)$ is real when $w$ is real and $|w|$ is large.
From~\eqref{eq:hpoly} we get the identity
\begin{equation}\label{eq:Gpoly}
8G^4w^2-20G^3w^2+8G^2w(2w+1)+G(-4w^2-12w+1)+4w=0.
\end{equation}
Stieltjes inversion can be used to find the measure $\mu_{a^*a}$ from knowledge of the algebraic function $G$:
$\mu_{a^*a}$ has an atom at a point $t_0\in\Reals$ if and only if,
\begin{equation}\label{eq:Gatom}
\liminf_{\eps\to0^+}\eps G(t_0+\eps i)>0,
\end{equation}
and then the limit exists and the value of this limit equals $\mu_{a^*a}(\{t_0\})$
while elsewhere on the real line, $\mu_{a^*a}$ has density given by
\[
\frac{d\mu_{a^*a}}{dt}(t)=\lim_{\eps\to0^+}\frac{-\ImagPart G(t+i\eps)}\pi.
\]
Note that, since $a^*a$ is positive and bounded, $\mu_{a^*a}$ is compactly supported in $[0,\infty)$.

Of course, an explicit formula for $G$ can be found involving radicals, (solving the quartic)
 and care must be taken to find the correct branch
of $G$ near the real axis;
the correct branch of $G$ is the one that near $w=\infty$ has asymptotics $G(w)=\frac1w+O(|w|^{-2})$.
However, the following facts can be seen by a somewhat less arduous analysis:
\begin{prop}\label{prop:afacts}
The element $a$ has zero kernel but is not invertible.
Its norm is the root of the polynomial
\[
16 x^8-160 x^6+540 x^4-680 x^2+27
\]
that is approximately equal to $2.18942$.
\end{prop}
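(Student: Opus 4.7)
The plan is to analyze the Stieltjes transform $G=G(w)=w^{-1}h(w^{-1})$, which by~\eqref{eq:Gpoly} satisfies a polynomial identity of degree $4$ in $G$ with coefficients in $\Cpx[w]$, and to exploit also the faithfulness of the trace $\tau$ on $\At$ established in Example~\ref{ex:nofreepolar}.

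I would first address the norm. Since $\tau$ is faithful, $\|a\|^2 = \|a^*a\| = \sup\operatorname{supp}(\mu_{a^*a})$, and the boundary points of $\operatorname{supp}(\mu_{a^*a})$ lie among the branch points of the algebraic function $G$. These branch points are the zeros of the $G$-discriminant of~\eqref{eq:Gpoly}, and a direct (if tedious) calculation yields, after removing a power of $w$ arising from the vanishing of the leading coefficient $8w^2$ at the origin, the polynomial $16w^4-160w^3+540w^2-680w+27$. Substituting $w=x^2$ gives the polynomial appearing in the statement; numerical evaluation of this quartic in $w$ identifies exactly two real roots (approximately $0.041$ and $4.794$), the larger of which is $\|a^*a\|$, so $\|a\|\approx 2.18942$.

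For the claim that $\ker a = 0$, I would appeal to the Stieltjes inversion criterion~\eqref{eq:Gatom}. Setting $w=0$ in~\eqref{eq:Gpoly} collapses every term except the linear one in $G$ to zero, so the unique finite root at $w=0$ is $G=0$; hence the physical branch of $G$ (the one with $G(w)\sim 1/w$ at infinity) extends continuously to the value $0$ at the origin. Therefore $\eps G(i\eps)\to 0$ as $\eps\to 0^+$, so $\mu_{a^*a}(\{0\})=0$, and faithfulness of $\tau$ yields $\ker(a^*a)=\ker a=\{0\}$.

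For non-invertibility, I would argue by contradiction: if $a$ were invertible, then so is $a^*a$, hence $0\notin\operatorname{supp}(\mu_{a^*a})$ and the physical branch of $G$ extends analytically through $w=0$, forcing $G(0)=0$ by the preceding paragraph. On the other hand, for $w=0$ lying outside the spectrum one has $G(0)=\tau((-a^*a)^{-1})=-\tau((a^*a)^{-1})<0$, the strict inequality coming from faithfulness of $\tau$ applied to the positive invertible element $(a^*a)^{-1}$; this contradicts $G(0)=0$. The main obstacle in carrying out the whole proof is the explicit computation of the $G$-discriminant of~\eqref{eq:Gpoly} together with identifying which of its real roots is the norm, which is the bulk of the work and the content of the accompanying Mathematica notebook.
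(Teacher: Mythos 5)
The main gap is in your argument that $\mu_{a^*a}$ has no atom at $0$. From the fact that setting $w=0$ in~\eqref{eq:Gpoly} leaves only the term $G\cdot1$, you conclude that the physical branch of $G$ ``extends continuously to the value $0$ at the origin.'' This does not follow, and it is in fact false: the leading coefficient $8w^2$ of~\eqref{eq:Gpoly} vanishes at $w=0$, so three of the four branches blow up there, and the Newton-polygon/Puiseux analysis in the paper shows that the physical branch is one of them, behaving like $-\tfrac12w^{-2/3}$ as $w\to0$ (this is exactly what produces the $t^{-2/3}$ singularity of the density recorded in the final Remark). Your premise only shows that \emph{if} the physical branch stays finite at $0$, its value there is $0$. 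What saves the conclusion is that every branch satisfies $\eps\,G_j(i\eps)\to0$: the finite branch trivially, and the divergent branches because $\eps\cdot\eps^{-2/3}=\eps^{1/3}\to0$; so the criterion~\eqref{eq:Gatom} still fails at $t_0=0$, but to see this one must actually determine the order of the singularity, as in~\eqref{eq:G1}--\eqref{eq:G2}. A second, smaller gap concerns the norm: having located the two positive real roots $\approx0.041$ and $\approx4.794$ of the reduced discriminant, you assert without argument that the larger one is $\|a^*a\|$. Something must rule out the support ending at $0.041$; the paper does this with the first moment, $\|a\|^2\ge\tau(aa^*)=1>0.041$, read off from~\eqref{eq:hseries}.

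Your non-invertibility argument, by contrast, is correct and genuinely different from the paper's. Under the contradiction hypothesis $0\notin\operatorname{supp}(\mu_{a^*a})$, the transform $G$ really is analytic at $w=0$ (the complement of the support is connected, so the algebraic identity~\eqref{eq:Gpoly} continues to $w=0$ and forces $G(0)=0$), while positivity of $\tau$ gives $G(0)=-\tau((a^*a)^{-1})\le-\|a^*a\|^{-1}<0$. This bypasses the paper's more delicate identification of the physical branch as $G_2$ via reality and sign considerations on the negative real axis, at the cost of yielding no information about the behavior of the density near $0$.
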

\begin{proof}
Using the method of Newton's polytope to find Puiseux series asymptotic expansions
for the algebraic function roots of the polynomial equation~\eqref{eq:Gpoly}, we find that near $w=0$, the four algebraic functions are
\begin{align}
G_1(w)&=-4w-48w^2+O(|w|^3) \label{eq:G1} \\
G_j(w)&=-\frac12w^{-2/3}+\frac23w^{-1/3}+\frac56+O(|w|^{1/3})\quad (j=2,3,4) \label{eq:G2}
\end{align}
where the three roots $G_j$ for $j=2,3,4$ result from chosing the different branches of $w^{1/3}$.
We set $G_2$ to be the one that makes $w^{1/3}$ negative real when $w$ is negative real.
Already we see that the Stieltjes transform $G$ fails the condition~\eqref{eq:Gatom} at $t_0=0$, 
so $\mu_{aa^*}=\mu_{a^*a}$ has no atom at $0$ and,
thus, $a$ has zero kernel and zero co-kernel.

Since $\mu_{a^*a}$ has no support in $(-\infty,0)$,
we see that the Stieltjes transform must be equal to either $G_1$ or $G_2$ near $w=0$ (and in the domain of $G$),
since as $w$ approaches negative numbers near zero from above, both $\ImagPart G_3$ and $\ImagPart G_4$ approach nonzero numbers.
To see that $a$ is not invertible, it will suffice to see that $G=G_2$, since it will yield nonzero density for $\mu_{a^*a}$
on some interval $[0,\delta)$.

The discriminant of the polynomial in~\eqref{eq:Gpoly}, with respect to the variable $G$, is
\begin{equation}\label{eq:discr}
-64w^4(16 w^4-160 w^3+540 w^2-680 w+27),
\end{equation}
whose real roots, other than $0$, are approximately $0.0410263$, and $4.79356$.
The coefficient of $G^4$ in~\eqref{eq:Gpoly} is $w^2$, which has roots only at $0$.
Thus, the only place on the real axis where $G$ can diverge to $\infty$ is $0$,
and the only places where two or more of the four algebraic roots of~\eqref{eq:Gpoly}
can agree is where $w$ is one of the roots of the polynomial~\eqref{eq:discr}.
The polynomial~\eqref{eq:Gpoly} has real coefficients, so roots come in complex conjugate pairs.
Thus $G(w)$, which is real and negative for $w<<0$, must remain real as $w$ approaches $0$ through negative numbers.
Moreover, setting $G$ equal to $0$ in~\eqref{eq:Gpoly} yields $w=0$, so $G$ is nonvanishing on the negative real axis.
Therefore,  $G(w)$ is real and strictly negative for all $w\in(-\infty,0)$.
However, this is clearly not possible for the function $G_1$, according to the asymptotics~\eqref{eq:G1}.
Thus, $G=G_2$ and $a$ is not invertible.

Similar considerations show that the maximum of the support of $\mu_{a^*a}$,
which is equal to $\|a\|^2$,
can only be one of the roots of~\eqref{eq:discr}.
However, the coefficients in the moment series~\eqref{eq:hseries} yield lower bounds on $\|a\|^2$;
already the coefficient of $z$ yields
$\|a\|^2\ge\tau((aa^*))=1$.
So $\|a\|^2$ equals the largest of the real roots of~\eqref{eq:discr}, as required for the assertion in the proposition about $\|a\|$.
\end{proof}

\begin{remark}
From the asymptotics~\eqref{eq:G2}, we easily see that
the density of the measure $\mu_{a^*a}$ behaves asymptotically like
\[
\frac{d\mu_{a^*a}}{dt}(t)=\frac{\sqrt3}{4\pi}t^{-2/3}+O(t^{-1/3})
\]
as $t$ approaches $0$ from the right.
Thus, the distribution $\mu_{|a|}$ has density whose asymptotic expansion is
\[
\frac{d\mu_{|a|}}{ds}(s)=2s\frac{d\mu_{a^*a}}{dt}(s^2)=\frac{\sqrt3}{2\pi}s^{-1/3}+O(s^{1/3})
\]
as $s$ tends to $0$ from the right.
We calculated the density of $\mu_{|a|}$ numerically; a plot is in Figure~\ref{fig:densplot};
for comparison purposes, the density $\frac{d\mu_{|z|}}{ds}(s)=\frac1\pi\sqrt{4-s^2}$
of the quarter-circular element $|z|$ is plotted on the same grid, with the dashed line.
For details see the Mathematica Notebook file that is available with this paper.
\begin{figure}[hb]
\caption{The density of the measure $\mu_{|a|}$.}
\label{fig:densplot}
\includegraphics{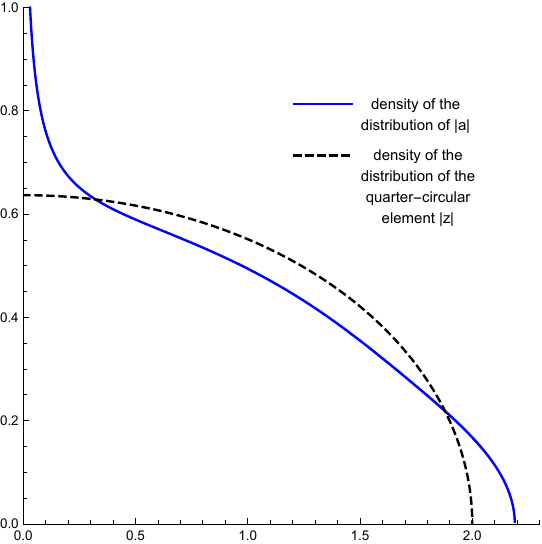}
\end{figure}
\end{remark}

\section{Erratum (A version of which will appear in print in the Houston Journal of Mathematics)}

Theorem~5.8 of this paper is incorrect.
While assertions~(a) and~(b) are equivalent and the implication (c)$\implies$(a) holds, the implication (a)$\implies$(c) fails to hold.
A counter example is provided by taking $a$ to be a circular element in a tracial (scalar-valued) $C^*$-noncommutative probability space $(A_0,\tau_0)$, normalized so that $\tau_0(a^*a)=1$.
We assume in addition that $a$ has polar decomposition $a=v|a|$ for a unitary $v\in A_0$.
Then, as is well known, $\tau_0(v^n)=0$ for all integers $n\ge1$.
Taking $B$ to be a unital C$^*$-algebra, $B\ne\Cpx$, with a faithful tracial state $\tau_B$,
we let $(A,\tau)=(A_0,\tau_0)*(B,\tau_B)$ be the free product with amalgamation over the scalars and let $\Ec:A\to B$ be the $\tau$-preserving conditional expectation.
By Theorem~6 of~\cite{SS01}, the element $a$ is also $B$-valued circular with respect to $\Ec$, with cumulant maps (in the notation of Theorem~5.8) given by
\[
\beta_1^{(1)}(b)=\beta_1^{(2)}(b)=\tau_B(b)1
\]
and $\beta_k^{(1)}=\beta_k^{(2)}=0$ for all $k\ge2$.
Thus, the hypothesis of part~(b) of Theorem~5.8 is fulfilled, when $\theta$ is any $\tau_B$-preserving automorphism of $B$, in particular when $\theta$ is the identity automorphism.
However, $a$ cannot have the same $B$-valued distribution as a product $up$ for $p\ge0$ and $u$ a unitary commuting with $B$, so part~(c) of the theorem fails to hold in this case.
In fact, for such a product $up$, the $B$-valued $*$-distribution of $u$ is determined by that of $a$.
(This can be seen, for example, by representing everything on a Hilbert space and choosing polynomials $q_n$ so that $up\,q_n(p^2)$ stays bounded and converges in strong operator topology to $u$ as $n\to\infty$.)
However, we already have $a=v|a|$ where $v$ is $*$-free from $B$ and is not a scalar multiple of the identity, so cannot commute with $B$.

The purported proof of Theorem~5.8 is in error on page 242, line 8 of the published version,
when it asserts that the pair $(u,p)$ has the same $B$-valued $*$-distribution as $(u,s|p|)$.
In particular, $p$ and $s|p|$ need not have the same $B$-valued $*$-distribution.
Indeed, while we do have $\Et((s|p|)^n)=\Et(p^n)$ for every integer $n\ge1$, we need not have, for example, $\Et(s|p|bs|p|)=\Et(pbp)$ for all $b\in B$.

When $a$ commutes with every element of $B$, then the validity of the proof and result hold.
Thus, the application in the proof of the (already well known) Corollary~5.9 is valid.
However, Corollary~5.10 is incorrect, as shown by the example given above.
Finally, Corollary~6.8 is not proved and may well be incorrect.

\end{document}